\def\defthm#1#2#3#4{
  \newtheorem{#1}[theorem]{#3}
  \newtheorem*{#1*}{#3}
  \newtheorem{#2}[theorem]{#4}
  \newtheorem*{#2*}{#4}
  \crefname{#1}{#3}{#4}
  \crefname{#2}{#4}{#4}  
}
\newtheoremstyle{mythm}%
{10pt}
{}
{\itshape}
{}
{\bf}
{.}
{.5em}
{}%
\newtheoremstyle{mydef}%
{10pt}
{3pt}
{}
{}
{\bf}
{.}
{.5em}
{}%
\newtheoremstyle{myrmk}%
{10pt}
{3pt}
{}
{}
{\bf}
{.}
{.5em}
{}%
\theoremstyle{mythm}
\newtheorem{theorem}{Theorem}[section]
\newtheorem*{theorem*}{Theorem}
\theoremstyle{mydef}
\theoremstyle{myrmk}
\newcommand{\sprime}{^{\prime}}
\newcommand{\pbs}{\scalebox{1.5}{\rlap{$\cdot$}$\lrcorner$}}
\newcommand{\pos}{\rotatebox[origin=c]{180}{\pbs}}
\newcommand{\adj}{\rotatebox[origin=c]{180}{$\vdash$}\hspace{0.1pc}}
\newcommand*{\phv}{\makebox[1.5ex]{\textbf{$\cdot$}}}
\author{Raffael Stenzel}
\title{$(\infty,1)$-Categorical comprehension schemes}
\begin{document}
\maketitle

\begin{abstract}
We define and study notions of comprehension in $(\infty,1)$-category theory. In essence, we do so by implementing B\'{e}nabou's 
foundations of naive category theory in a univalent meta-theory. In particular, we develop natural generalizations of smallness and relative 
definability in this context, and show for instance that the universal cartesian fibration is small. Furthermore, by building on 
Johnstone's notion of comprehension schemes for ordinary fibered categories, we characterize and relate numerous higher categorical 
properties and structures such as left exactness, local cartesian closedness, univalent morphisms and internal $(\infty,1)$-categories in 
terms of comprehension schemes.
\end{abstract}

\section{Introduction}

\subsection*{Comprehension \`{a} la B\'{e}nabou}

Comprehension schemes arose as crucial notions in the early work on the foundations of set theory, and hence found 
expression in a large variety of foundational settings for mathematics. Particularly, they have been introduced to the 
context of categorical logic first by Lawvere and then by B\'{e}nabou in the 1970s. Since, they have been studied in 
different forms and have been applied to many examples throughout the literature of category theory.
The notion of a comprehension scheme as used in this paper is ultimately rooted in the Axiom scheme of Restricted 
Comprehension (often referred to as the Axiom scheme of Separation) which is part of the Zermelo-Fraenkel 
axiomatization of set theory. The scheme states that every definable subclass of a set is again a set. Or in other 
words, that all sets in the set theoretic universe satisfy all set theoretical comprehension schemes.

In his critique of the foundations of naive category theory \cite{benaboufibfound}, B\'{e}nabou provided an intuition to 
define the notion of a comprehension scheme not only in category theory (over the topos of 
sets), but in category theory over any other category in a syntax-free way. In this generality, comprehension schemes become properties of 
Grothendieck fibrations over arbitrary categories. Which particular comprehension schemes are satisfied by a given Grothendieck fibration
$\mathcal{E}\twoheadrightarrow\mathcal{C}$ then depends on the categorical constructions available in $\mathcal{E}$ over
$\mathcal{C}$ and in $\mathcal{C}$ itself.
We point out that B\'{e}nabou in his work 
purposefully did not even specify what exactly a ``category'' is \cite[Paragraph 0.5]{benaboufibfound}, and that, on 
the basis of his effectively meta-meta-mathematical analysis, the framework of $(\infty,1)$-category theory over the 
base $\infty$-topos of spaces playing the role of ``sets'' appears to yield a suitable notion of such a category 
theory in his sense. This will be justified implicitly by the notions recalled in Section 2 together with the constructions and results of this paper. In this reading of ``sets'' as spaces one then is to replace 
the occurrences of ``ZF'' in \cite{benaboufibfound} by a suitable univalent type theory. By virtue of univalence of 
the universes $\mathbb{U}$ in the meta-theory of spaces (\cite{klvsimp} and \cite[Paragraph 6.2]{benaboufibfound}), we argue that the theory of fibered
$(\infty,1)$-categories is in fact a model of B\'{e}nabou's theory of ``categories'' where equality of objects is well-behaved and intrinsic 
(rather than ``extra structure, which a ``category'' $p$ may or may not admit'' \cite[Paragraph 8.9.1]{benaboufibfound}). 
This is instantiated by the fact that the difference between functors and pseudo-functors and hence the difference between cartesian 
fibrations and split cartesian fibrations vanishes. We will explain this in more detail in the coming paragraphs.

A style of definition for (what we will call ``diagrammatic'') comprehension schemes which tie together the elementary examples given in the 
glossary of \cite{benaboufibfound} has been introduced by Johnstone in \cite[Section B1.3]{elephant}. Subject to a few technical adjustments, 
these comprehension schemes give rise to a very well-behaved theory in the $(\infty,1)$-categorical context which we will introduce in 
Section~\ref{seccomp}. 
For instance, just as all sets in the set theoretical universe under ZF satisfy all set theoretic comprehension schemes, we will see that all 
$(\infty,1)$-categories internal to any complete (left exact) universe $\mathcal{C}$ of discrete $(\infty,1)$-categories 
satisfy all (finite) diagrammatic comprehension schemes (see, Corollary~\ref{corsmallallcompschemes} and Corollary~\ref{corfinitecomp}). 
Among all fibered $(\infty,1)$-categories over $\mathcal{C}$ they in fact are characterized by this property, as 
internalizability (i.e.\ ``smallness'') is a set of comprehension schemes.
While B\'{e}nabou himself derived his motivation of the notion of comprehension at least in part directly from set 
theory (i.e.\ in the case of ``relative definability'' \cite[Paragraphs 6.2, 7.1, 7.4]{benaboufibfound}), we will give a motivation of 
the notion via the categorical semantics of type theory and its underlying Lawvere-style notion of comprehension 
\cite{lawverecomp} instead. This is not due to personal preference, but, first, due to the fact 
that a categorical model of type theory is virtually the same thing as a general comprehension scheme on a category 
in the role of a universe as far as this paper is concerned, and second, due to the fact that the meta-theory of 
spaces from an $(\infty,1)$-categorical point of view is currently best expressed in type theoretic terms 
\cite{klvsimp}.

Therefore, first, let us recall some of the intuition about ``categories'' laid out in \cite{benaboufibfound} which 
we will apply to the theories of both ordinary and higher (fibered) categories. Thus, we think of a ``category''
$\mathcal{C}$ as an ($(\infty,1)$-)category fibered over the ($(\infty,1)$-)category $\mathrm{Cat}$ of small
($(\infty,1)$-)categories by way of its associated fibration
$\mathrm{Diag}(\mathcal{C})\twoheadrightarrow\mathrm{Cat}$ of diagrams. This fibration is defined as the 
Grothendieck construction of the exponential
$\mathrm{Fun}(\phv,\mathcal{C})\colon\mathrm{Cat}^{op}\rightarrow\mathrm{Cat}$. When restricted to the category
$\iota\colon\mathcal{S}\hookrightarrow\mathrm{Cat}$ of discrete categories, it returns the classic
fibration-of-families construction $\mathrm{Fam}(\mathcal{C})\twoheadrightarrow\mathcal{S}$. When postcomposed with 
the underlying discrete category construction $(\cdot)^{\simeq}\colon\mathrm{Cat}\rightarrow\mathcal{S}$, it returns 
the representable presheaf of diagrams $y(\mathcal{C})\colon\mathrm{Cat}^{op}\rightarrow\mathcal{S}$. Both functors
$\mathrm{Fam}:=\mathrm{Fun}(\iota(\cdot),-)\colon\mathrm{Cat}\rightarrow\mathrm{Fun}(\mathcal{S}^{op},\mathrm{Cat})$ and
$y=\mathrm{Fun}(\phv,-)^{\simeq}\colon\mathrm{Cat}\rightarrow\mathrm{Fun}(\mathrm{Cat}^{op},\mathcal{S})$ are fully faithful and preserve 
both limits and exponentials. Indeed, for fully faithfulness of the former in the $(\infty,1)$-categorical case see 
Example~\ref{expleextqcat}; for continuity, exponentials and $(\infty,2)$-categorical aspects see \cite{rs_ext}.
In this sense, both category theory indexed over discrete categories as well as discrete category theory indexed over categories 
are faithful generalizations of category theory, where we consider $\mathcal{S}$ as a universe for discrete category theory and
$\mathrm{Cat}$ as a universe for category theory. Indeed, category theory is always a theory of categories in context (of some meta-theory). 
This can be exemplified by changing the base of the indexing we consider from $\mathcal{S}$ or $\mathrm{Cat}$ to any category $\mathcal{C}$ 
in the role of a universe for category theory. Thereby, if we think of a given category $\mathcal{C}$ as a synthetic universe of discrete 
categories, we think of fibrations over $\mathcal{C}$ -- or $\mathcal{C}$-indexed categories, respectively -- as categories in context of
$\mathcal{C}$. The indexed category $\mathrm{Fam}(\mathcal{C})\colon\mathcal{S}^{op}\rightarrow\mathrm{Cat}$ for a category $\mathcal{C}$ 
(internal to $\mathcal{S}$) is replaced by the externalization $\mathrm{Ext}(X)\colon\mathcal{C}^{op}\rightarrow\mathrm{Cat}$ of a category 
$X$ internal to $\mathcal{C}$ (\cite[Section 7.3]{jacobsttbook} and Section~\ref{secext}). The (large) indexed category of families
$\mathrm{Fam}(\mathcal{S})\simeq\mathcal{S}_{/(\cdot)}\colon\mathcal{S}^{op}\rightarrow\mathrm{Cat}$ associated to the large category
$\mathcal{S}$ itself is replaced by the canonical indexing $\mathcal{C}_{/(\cdot)}\colon\mathcal{C}^{op}\rightarrow\mathrm{Cat}$ whenever 
$\mathcal{C}$ has pullbacks. If we think of a given category $\mathcal{C}$ as a synthetic universe of categories instead, then the presheaf 
of diagrams $\mathrm{Fun}(\phv,\mathcal{C})^{\simeq}\colon\mathrm{Cat}^{op}\rightarrow\mathcal{S}$ associated to a category
$\mathcal{C}\in\mathrm{Cat}$ is replaced by the representable presheaves of the form $y(C)\colon\mathcal{C}^{op}\rightarrow\mathcal{S}$ 
for objects $C\in\mathcal{C}$. 

Furthermore, it is useful to think of the objects in the base category $\mathcal{C}$ themselves as contexts, too, as well as of a general 
indexed category $\mathcal{E}\colon\mathcal{C}^{op}\rightarrow\mathrm{Cat}$ as a similarly generalized doctrine of types in varying contexts 
in $\mathcal{C}$. 
Therefore, we recall that to make a category of contexts $\mathcal{C}$ together with a family
$\mathrm{Ty}\colon\mathcal{C}^{op}\rightarrow\mathcal{S}$ of types in context into an actual categorical model of type theory, one 
essentially only further requires a notion of context extension in $\mathcal{C}$ with respect to $\mathrm{Ty}$. Such an associated notion of 
context extension then automatically gives rise to a notion of typed terms in context as well by considering sections of context extensions 
in $\mathcal{C}$. In ordinary category theory, a notion of context extension can 
be expressed in various (but essentially equivalent) terms, e.g.\ by way of full comprehension categories 
\cite{jacobscompcats}, categories with attributes \cite{cartmell, moggicatatt} or categories with families 
\cite{dybjerintt}. One further such notion in indexed terms is 
given by Awodey's natural models \cite{awodeynatmod}; that is, a category $\mathcal{C}$ together with a representable 
natural transformation
$\mathrm{Tm}\rightarrow\mathrm{Ty}$ in $\hat{\mathcal{C}}=\mathrm{Fun}(\mathcal{C}^{op},\mathcal{S})$ as originally 
introduced by Grothendieck (see Definition~\ref{defrepnattransf}). 
Such a natural transformation equips $\mathcal{C}$ for every $C\in\mathcal{C}$ and every type $A\in \mathrm{Ty}(C)$ 
with a context $C.A\in\mathcal{C}_{/C}$ which represents the type $A$ in context $C$ in a suitable fashion. This 
particular abstract expression of a ``categorical type theory'' has been studied axiomatically in
$(\infty,1)$-category theory by Nguyen and Uemura in \cite{ngyuenuemuratt}.

\begin{definition}\label{defgencomp}
Let $\mathcal{C}$ be a category. A \emph{general comprehension scheme} on $\mathcal{C}$ is a natural transformation $f\colon X\rightarrow Y$ 
in $\hat{\mathcal{C}}$. We say that $\mathcal{C}$ has $f$-comprehension if $f$ is representable.
\end{definition}

In this paper we study various classes of instances of this particular notion of comprehension. It may be worth to point out here
however that, in contrast to the ordinary categorical situation, the structure of a general comprehension scheme over an
$(\infty,1)$-category $\mathcal{C}$ is provably richer than that of a full comprehension $(\infty,1)$-category over $\mathcal{C}$ (when 
defined accordingly, see Proposition~\ref{propaltdefgencomp} and Remark~\ref{remaltdefgencomp}).

Now, every indexed category $\mathcal{E}\colon\mathcal{C}^{op}\rightarrow\mathrm{Cat}$ naturally induces a $\mathrm{Cat}$-indexed collection 
of general comprehension schemes on $\mathcal{C}$ by way of its accordingly indexed presheaf of diagrams. We will call 
these the diagrammatic comprehension schemes associated to $\mathcal{E}$. More precisely, the $\mathcal{C}$-indexed category
$\mathcal{E}\colon\mathcal{C}^{op}\rightarrow\mathrm{Cat}$ (after a choice of an equivalent splitting in the ordinary 
case) induces the $\mathcal{C}$-indexed ``generalized category''
\begin{align*}
y\circ\mathcal{E}\colon\mathcal{C}^{op} & \rightarrow\mathrm{Fun}(\mathrm{Cat}^{op},\mathcal{S})\\
C & \mapsto \mathrm{Fun}(\phv,\mathcal{E}(C))^{\simeq},
\end{align*}
which we will consider as a functor
$\llbracket\phv,\mathcal{E}\rrbracket\colon\mathrm{Cat}^{op}\rightarrow\mathrm{Fun}(\mathcal{C}^{op},\mathcal{S})$ by Currying.

\begin{definition}\label{defdiagcompsheme}
Let $\mathcal{C}$ be a category and let $\mathcal{E}$ be a $\mathcal{C}$-indexed category. A \emph{(standard) diagrammatic 
comprehension scheme} on $\mathcal{E}$ over $\mathcal{C}$ is a functor $G\colon I\rightarrow J$ of categories together with the induced 
general comprehension scheme
\[G^{\ast}\colon\llbracket J,\mathcal{E}\rrbracket\rightarrow\llbracket I,\mathcal{E}\rrbracket\]
in $\hat{\mathcal{C}}$. We say that $\mathcal{E}$ has $G$-comprehension if $\mathcal{C}$ has $G^{\ast}$-comprehension as in 
Definition~\ref{defgencomp}.
\end{definition}

With Definition~\ref{defdiagcompsheme} in mind, we think of each presheaf
$\llbracket I,\mathcal{E}\rrbracket=\mathrm{Cat}(I,\mathcal{E}(\cdot))$ as a family of types over $\mathcal{C}$. Among the most basic such 
families will be the type family $\mathcal{E}(\cdot)^{\simeq}$ of objects in $\mathcal{E}$ and the type family
$\mathrm{Cat}(\Delta^1,\mathcal{E}(\cdot))$ of morphisms in $\mathcal{E}$. 
For every functor $G\colon I\rightarrow J$ of categories -- which we think of as a morphism of categorical shapes and which often will be an extension of such -- satisfaction of the according diagrammatic comprehension scheme requires the 
existence of a context extension operator in $\mathcal{C}$ which for a context $C\in\mathcal{C}$ and a type
$F\colon I\rightarrow\mathcal{E}(C)$ assigns an object $C.F\in\mathcal{C}_{/C}$ which represents diagram extensions of $F$ along $G$ in
$\mathcal{E}$ in a suitable way. In most of the paper we will be interested in specifically these kinds of diagrammatic comprehension 
schemes. However we will define diagrammatic comprehension schemes more generally for contravariant functors of the form 
$\mathcal{E}\colon\mathcal{C}^{op}\rightarrow K$ valued in categories $K$ potentially different from
$\mathrm{Cat}$ (Definition~\ref{defGcomp}). This additional generality ensures that diagrammatic comprehension 
schemes are capable to capture additional algebraic structures which a given $\mathcal{C}$-indexed 
category $\mathcal{E}$ may come equipped with. This for example subsumes all instances of Johnstone's ``generalized'' 
diagrammatic comprehension schemes \cite[Section B1.3]{elephant}. This additional generality 
also has been considered conceptually at least in part already by B\'{e}nabou himself in his definition of
$K$-fibrations respective a corpus $K$ \cite{benaboupetit, benaboucorpus}. Apart from a few examples however, we 
will consider general comprehension schemes which go beyond Definition~\ref{defdiagcompsheme} only in 
Section~\ref{seccompcats} to briefly discuss their general theory and address definability in this context.

The expressive power of (generalized) diagrammatic comprehension schemes encompasses important examples in ordinary fibered category theory, 
most notably necessary and sufficient conditions to characterize elementary toposes (over other elementary toposes), 
see \cite[Paragraph 11.6.(iii)]{benaboufibfound} and \cite[Theorem 11.1]{streicherfibcats}. We will see in the course 
of the paper that it does so 
in the context of fibered $(\infty,1)$-category theory as well (see the outline of main results below). In fact, we 
will see that diagrammatic comprehension schemes are a more expressive and more natural notion in $(\infty,1)$-category theory than they are in ordinary category 
theory, for the reason that meta-theoretical equalities which exceed the language of formal category theory -- and 
which yet still naturally arise in the context of ordinary category theory all the time -- will intrinsically be replaced by instances of 
equivalences between $(\infty,1)$-categories. For instance, commutativity of squares 
in $(\infty,1)$-categories is again a matter of equivalence between $(\infty,1)$-categories rather than of set-theoretical equality.
This eliminates the gymnastics with elementary fibrations in \cite{streicherfibcats}, and also 
affects the remarks on the ``strangeness'' of equality in \cite[Section 8]{benaboufibfound}. In this 
sense, the study of equality becomes a study of equivalence, which is exactly due to the fact that the meta-theory satisfies the Univalence 
Axiom. This difference to ordinary category theory is in practice exemplified by B\'{e}nabou's comments about ``equality of objects'' 
which in ordinary fibered category theory is instantiated by the generally non-existent, non-canonical and in particular non-unique choice of 
a splitting of a fibration \cite[Paragraph 9.4]{benaboufibfound}. In $(\infty,1)$-category theory, as composition on all levels itself is 
only defined up to equivalence, a splitting only can be defined in homotopy-coherent terms. However, every cartesian fibration is
split homotopy-coherently in an essentially unique way. This means that $(\infty,1)$-categories in fact do have equality (i.e.\ equivalence) 
of objects in the sense of \cite[Paragraphs 8.6 and 8.7]{benaboufibfound}. Indeed we will see that all locally small 
fibered $(\infty,1)$-categories over a left exact base satisfy all of the identity principles in \cite[Paragraph 8.7]{benaboufibfound} which 
are there said to be unnatural and hence implausible for a general ``category'' (in a non-univalent meta-theory) to satisfy 
(Corollary~\ref{coridprinciples8.7}).

The main sources for the ordinary categorical constructions and definitions we have adapted are Johnstone 
\cite{elephant}, Jacobs \cite{jacobscompcats, jacobsttbook} and Streicher \cite{streicherfibcats}. 
The results presented in this paper build to a large extent on the fundamental work on quasi-categories provided by 
Lurie in \cite{luriehtt} and, ultimately, by Joyal in \cite{joyalqcats}.
An exhaustive treatment of fibered $(\infty,1)$-categories with a scope comparable to \cite[Chapter B1]{elephant} or 
\cite{streicherfibcats} in the ordinary categorical context would naturally require $(\infty,2)$-categorical 
considerations. These however are barely touched upon in this paper because the notions studied here are 
properties of one indexed $(\infty,1)$-category at a time. The collection of all $\mathcal{C}$-indexed
$(\infty,1)$-categories is only introduced in Section~\ref{secpre} to state its equivalence to the collection of 
fibered $(\infty,1)$-categories over $\mathcal{C}$ via Lurie's unstraightening construction, and therefore it is 
defined as an $(\infty,1)$-category itself. Partial treatments of fibered  $(\infty,1)$-category theory in such 
generality can be found e.g.\ in \cite{barwicketalparahct, luriehtt, riehlverityelements}. 

\subsection*{Outline of the paper and main results}

The results of this paper can be understood as a proof that fibered $(\infty,1)$-category theory is a (univalent model of) ``category'' 
theory in the sense of B\'{e}nabou \cite{benaboufibfound}, and that it in particular subsumes all positive statements about definability 
discussed in the paper and more.

We will develop the notion of comprehension schemes in the language of indexed quasi-categories.\footnote{Possibly much to the dismay of 
B\'{e}nabou himself according to \cite[Paragraph 12]{benaboufibfound}. One may argue however that the coherence issues he lamented to arise 
in indexed category theory do not arise in the same way in $(\infty,1)$-category theory.} We do so because the basic technical 
notions underlying the definition of comprehension schemes are more straightforward here than they are for fibered quasi-categories, and the 
proofs are easier to read (following suggestions of a referee).
We will however give an equivalent formulation of comprehension entirely in the language of fibered quasi-categories in 
Section~\ref{secsubcartfib}. This latter formulation is not only more faithful to Lawvere's, B\'{e}nabou's and Johnstone's original 
frameworks, but it will also provide an environment which allows to generalize the notion to $\infty$-cosmoses of not necessarily
$(\infty,1)$-categories in the sense of \cite{riehlverityelements}, as well as to give a straightforward proof of the fact that the notions 
and results in this paper are independent of the specific choice of model for $(\infty,1)$-category theory (Section~\ref{secmodindep}).

In Section~\ref{secpre} we recall the necessary material about cartesian fibrations from Lurie's book \cite{luriehtt}. We 
introduce the basic notions relevant to define and study comprehension schemes for fibered $(\infty,1)$-categories, give 
reference to the  equivalence between indexed $(\infty,1)$-category theory and fibered $(\infty,1)$-category theory via 
the corresponding Grothendieck construction (``unstraightening''), and discuss the most essential examples of 
indexed $(\infty,1)$-categories and their fibered counterparts. We recall the notion of a representable natural transformation and prove all 
statements about such that will be used in our applications in the subsequent sections.

Section~\ref{seccomp} introduces the definition of comprehension schemes and discusses many examples like global smallness, 
local smallness and definability of equivalences between objects and parallel pairs of $n$-morphisms for different cartesian 
fibrations. 
We provide tools for mutual reduction and verification of various instances (Lemma~\ref{lemmacompclosureprops}, 
Lemma~\ref{lemmachangeofbasecomp}), and apply them to prove an interplay between the most fundamental examples
(Proposition~\ref{lemmalocsm->definv}). Some of these instances are generalizations of notions and results from
\cite[Section B1.3]{elephant}, some are entirely new. 
In Section~\ref{secsubcartfib} we give an equivalent formulation of diagrammatic comprehension schemes (for the base case of
$K=\mathrm{Cat}_{\infty}$) entirely in the language of fibered $(\infty,1)$-category theory.

\begingroup
\def\thetheorem{\ref{lemmabenabou}}
\begin{proposition}
Let $G\colon I\rightarrow J$ be a map of simplicial sets and $p\colon\mathcal{E}\twoheadrightarrow\mathcal{C}$ be a cartesian fibration.
The fibration $p$ has $G$-comprehension if and only if for every vertical diagram $X\in\llbracket I,\mathcal{E}\rrbracket$ the
$(\infty,1)$-category $G^{\ast}\downarrow X$ of vertical $J$-structures extending $X$ horizontally has a terminal object.
\end{proposition}
\addtocounter{theorem}{-1}
\endgroup

The following are some of the fundamental instances of comprehension that are introduced in Section~\ref{seccomp} when formulated for various 
fibrations via Section~\ref{secsubcartfib}.

\begin{examples}\mbox{ }
\begin{itemize}
\item A cartesian fibration $p\colon\mathcal{E}\twoheadrightarrow\mathcal{C}$ over an $(\infty,1)$-category $\mathcal{C}$ with finite 
products is globally small if and only if its core $\mathcal{E}^{\times}$ has a terminal object (Example~\ref{exmplsmallness}).
\item The canonical fibration $t\colon\mathrm{Fun}(\Delta^1,\mathcal{C})\twoheadrightarrow\mathcal{C}$ associated to a small
$(\infty,1)$-category $\mathcal{C}$ with pullbacks is locally small if and only if $\mathcal{C}$ is locally cartesian closed 
(Proposition~\ref{exmplelocsmall}).
\item The representable right fibrations $\mathcal{C}_{/C}\twoheadrightarrow\mathcal{C}$ are locally small if and only if $\mathcal{C}$ has 
equalizers. They are always globally small whenever $\mathcal{C}$ has finite products (Example~\ref{remlocsmallrep}).
\item Any $(\infty,1)$-category $\mathcal{C}$ considered as a cartesian fibration over the point is globally small if and only if its core
$\mathcal{C}^{\times}\simeq\mathcal{C}^{\simeq}$ is contractible. It is locally small if and only if its hom-spaces are contractible 
(Example~\ref{exmplecntrblty}).
\item Given a (potentially large and locally large) $(\infty,1)$-category $\mathcal{C}$, its fibration of families
$\mathrm{Fam}(\mathcal{C})\twoheadrightarrow\mathcal{S}$ is (locally) small if and only if the $(\infty,1)$-category $\mathcal{C}$ is a 
(locally) small $(\infty,1)$-category (Example~\ref{exmplesize}).
\item Given a cartesian fibration $p\colon\mathcal{E}\twoheadrightarrow\mathcal{C}$ and an $(\infty,1)$-category $\mathcal{D}$ with pullbacks 
together with a left adjoint functor $F\colon\mathcal{D}\rightarrow\mathcal{C}$, we have $\mathrm{Comp}(F^{\ast}p)\subseteq\mathrm{Comp}(p)$ 
(Lemma~\ref{lemmachangeofbasecomp}).
\item The universal cartesian fibration $\pi^{op}\colon\mathrm{Dat}_{\infty}^{op}\twoheadrightarrow\mathrm{Cat}_{\infty}^{op}$ and the 
universal right fibration $\mathcal{S}_{\ast}^{op}\twoheadrightarrow\mathcal{S}^{op}$ have $G$-comprehension for every functor 
$G\colon\mathcal{I}\rightarrow\mathcal{J}$ between small $(\infty,1)$-categories $\mathcal{I}$, $\mathcal{J}$ 
(Example~\ref{exmplecompschemesidcat}). 
\end{itemize}
\end{examples}

In Section~\ref{secext} we define the externalization construction 
\begin{align}\label{equintroext}
\mathrm{Ext}\colon\mathrm{CS}(\mathcal{C)}\rightarrow\mathrm{Fun}(\mathcal{C}^{op},\mathrm{Cat}_{\infty})
\end{align}
of $(\infty,1)$-categories internal to an $(\infty,1)$-category $\mathcal{C}$ with pullbacks. The functor (\ref{equintroext}) is essentially given as a pushforward of 
the Yoneda embedding of $y\colon\mathcal{C}\rightarrow\hat{\mathcal{C}}$, and is hence a direct generalization of the synonymous ordinary 
categorical construction e.g.\ given in \cite[Section 7.3]{jacobsttbook}. It hence shares many properties of the Yoneda embedding; in fact 
when restricted to the $(\infty,1)$-category of internal $\infty$-groupoids it recovers $y$ up to equivalence. 
The functor $\mathrm{Ext}$ furthermore is fully faithful as well, and allows the detection of the according ``representables'' via a 
smallness criterion: namely, we will see that the indexed $(\infty,1)$-categories arising from internal $(\infty,1)$-categories in this way 
can be characterized as exactly the globally small and locally small ones. 
\begingroup
\def\thetheorem{\ref{thmsmall=ext}}
\begin{theorem}
Let $\mathcal{C}$ be an $(\infty,1)$-category with finite limits and $\mathcal{E}$ be a $\mathcal{C}$-indexed $(\infty,1)$-category. Then
$\mathcal{E}$ is represented by an internal $(\infty,1)$-category if and only if it is both globally small and locally small.
\end{theorem}
\addtocounter{theorem}{-1}
\endgroup

Given the chosen definition of global smallness in terms of comprehension, this is a result which holds in ordinary category theory only with 
some caveats, see Remark~\ref{reminftysmall} for more details. 

Two fundamental examples of such representable cartesian fibrations are given by the universal ($\kappa$-small) cartesian 
fibration $\mathrm{Dat}_{\infty}^{op}\twoheadrightarrow\mathrm{Cat}_{\infty}^{op}$ and the universal ($\kappa$-small) right fibration
$\mathcal{S}_{\ast}^{op}\twoheadrightarrow\mathcal{S}^{op}$ as constructed in \cite[Section 3.3.2]{luriehtt}.\footnote{We will eventually 
show that the domain $\mathrm{Dat}_{\infty}$ is the $(\infty,1)$-category $(\mathrm{Cat}_{\infty})_{\Delta^{\bullet}/ }$ of ``oplax-pointed''
$(\infty,1)$-categories. At least until then an abstract notation is necessary to refer to it given that it is constructed by entirely 
abstract means. We chose $\mathrm{Dat}_{\infty}\twoheadrightarrow\mathrm{Cat}_{\infty}$ for this purpose simply because the fibration 
functorially associates to an $(\infty,1)$-category $\mathcal{C}$ its higher categorical data $\Delta^n\rightarrow\mathcal{C}$.}  
There, Lurie showed that the universal right fibration is represented by the terminal object in the $(\infty,1)$-category
$\mathcal{S}$ of spaces. The universal cartesian fibration on the other hand cannot be representable by an object in $\mathrm{Cat}_{\infty}$ 
in the same sense, but we will see that it is representable in terms of an internal $(\infty,1)$-category via the externalization functor 
(Example~\ref{exmpleuniversalext}). The associated internal $(\infty,1)$-category is given by the interval object
$\Delta^{\bullet}\in\mathrm{Fun}(N(\Delta),\mathrm{Cat}_{\infty})$. In fact, it will 
follow from general considerations about comprehension schemes that the universal right fibration must then be 
represented by the pointwise invertible interval $I\Delta^{\bullet}$ in $\mathcal{S}$ in the same way 
(Example~\ref{exmpleunivcomptransfer}, Remark~\ref{exmpleuniversalext}). But this object is contractible in
$\mathrm{Fun}(N(\Delta),\mathcal{S})$, which recovers Lurie's original result by other means. These results have various applications. For 
instance, they allow for the definition of cotensors $X^J$ of internal $(\infty,1)$-categories $X\in\mathrm{CS}(\mathcal{C})$ with finite
$(\infty,1)$-categories $J$, and furthermore with arbitrarily sized $(\infty,1)$-categories $J$ whenever $\mathcal{C}$ is complete 
(Remark~\ref{remcorfinitecomp}). They also allow us to prove a generalization of the well-known fact that the limit preserving 
presheaves over a presentable $(\infty,1)$-category are exactly the representable ones.
\begingroup
\def\thetheorem{\ref{proprightadj}}
\begin{proposition}
Suppose $\mathcal{C}$ is a complete $(\infty,1)$-category. Then a $\mathcal{C}$-indexed $(\infty,1)$-category $\mathcal{E}$ is small if and 
only if the functor $\mathcal{E}\colon\mathcal{C}^{op}\rightarrow\mathrm{Cat}_{\infty}$  has a left adjoint.
\end{proposition}
\addtocounter{theorem}{-1}
\endgroup
In Section~\ref{seccompcats} we use the interplay between indexed and fibered structures to discuss various alternative characterizations of 
general comprehension schemes (Proposition~\ref{propaltdefgencomp}, Remark~\ref{remaltdefgencomp}), to show the existence of a universal such 
comprehension scheme whenever $\mathcal{C}$ is small and has pullbacks (Proposition~\ref{propunivreptransf}), and to
discuss definability and smallness in the context of full subfibrations. For instance, we show the following.
\begingroup
\def\thetheorem{\ref{propintfunctorscomp}}
\begin{proposition}
Let $\mathcal{C}$ be an $(\infty,1)$-category with finite limits, let $Y$ be an internal $(\infty,1)$-category in $\mathcal{C}$, and let
$f\colon p\rightarrow\mathrm{Ext}(Y)$ be a cartesian functor over $\mathcal{C}$. Then $p$ is represented by an internal $(\infty,1)$-category 
if and only if $\mathcal{C}$ has $f$-comprehension. 
In particular, $\mathcal{C}$ has $\mathrm{Ext}(f)$-comprehension for all internal functors $f\colon X\rightarrow Y$ in
$\mathrm{CS}(\mathcal{C})$.
\end{proposition}
\addtocounter{theorem}{-1}
\endgroup
In Section~\ref{secextuniv} we apply our results about the externalization construction from Section~\ref{secext} to 
full and replete comprehension $(\infty,1)$-categories over a fixed $(\infty,1)$-category $\mathcal{C}$ with 
pullbacks in the sense of Jacobs \cite{jacobscompcats} as introduced in Section~\ref{seccompcats}. Here we show that 
global smallness characterizes univalent morphisms in $\mathcal{C}$ whenever $\mathcal{C}$ has enough structure to 
define univalence in the first place.
\begingroup
\def\thetheorem{\ref{propunivalentcomprehensioncats}}
\begin{proposition}
Suppose $\mathcal{C}$ is left exact and locally cartesian closed. Given a full and replete comprehension $(\infty,1)$-category $J$ over
$\mathcal{C}$, the following are equivalent.
\begin{enumerate}
\item $J\twoheadrightarrow\mathcal{C}$ is globally small.
\item $J$ is represented by an internal $(\infty,1)$-category.
\item There is a univalent morphism $q\colon E\rightarrow B$ in $\mathcal{C}$ that classifies the morphisms in $J$.
\end{enumerate}
\end{proposition}
\addtocounter{theorem}{-1}
\endgroup
Thus, first, via Proposition~\ref{proprightadj} above this generalizes Gepner and Kock's characterization of univalent 
morphisms in presentable $(\infty,1)$-categories in terms of an associated sheaf property (\cite[Proposition 3.8]{gepnerkock}, 
Corollary~\ref{corcharunivalencela}).
And second, it allows us to characterize (elementary) $\infty$-toposes entirely in terms of comprehension schemes: they are the finitely 
bicomplete $\infty$-categories which -- while generally not globally small over themselves -- can instead be covered by a collection of small 
``neighbourhoods'' (Corollary~\ref{chareltopssmall}). This entails the following characterization of (Grothendieck) $\infty$-toposes.
\begingroup
\def\thetheorem{\ref{chartopssmall}}
\begin{corollary}
A presentable $(\infty,1)$-category $\mathcal{C}$ is an $\infty$-topos if and only if the following two conditions hold.
\begin{enumerate}
\item The canonical fibration $t\colon\mathrm{Fun}(\Delta^1,\mathcal{C})\twoheadrightarrow\mathcal{C}$ is locally small.
\item For all sufficiently large regular cardinals $\kappa$, the full comprehension $(\infty,1)$-category
$t_{\kappa}\colon\mathrm{Fun}(\Delta^1,\mathcal{C})_{\kappa}\twoheadrightarrow\mathcal{C}$
is small.
\end{enumerate}
\end{corollary}
\addtocounter{theorem}{-1}
\endgroup
Lastly, in Section~\ref{secmodindep} we use the fibered formulations of both general and diagrammatic comprehension to give a simple proof of 
model independence of our results and constructions. 

\begin{notation}\label{notationglobal}
The notation will follow Lurie \cite{luriehtt} in large. In particular, for the sake of readability, whenever we say ``$\infty$-category'' we 
mean $(\infty,1)$-category. The exponential $\mathcal{D}^{\mathcal{C}}$ of two quasi-categories $\mathcal{C}$, $\mathcal{D}$ in the category 
$\mathbf{S}$ of simplicial sets will be denoted by $\mathrm{Fun}(\mathcal{C},\mathcal{D})$. The large quasi-category of small
quasi-categories as defined and studied in \cite[Chapter 3]{luriehtt} will be denoted by $\mathrm{Cat}_{\infty}$. Its (small) hom-spaces
$\mathrm{Cat}_{\infty}(\mathcal{C},\mathcal{D})$ are given by the core $\mathrm{Fun}(\mathcal{C},\mathcal{D})^{\simeq}$, 
that is, the largest Kan complex contained in the quasi-category $\mathrm{Fun}(\mathcal{C},\mathcal{D})$. The quasi-category of spaces will 
be denoted by $\mathcal{S}$, the (super-large) quasi-category of large quasi-categories will be denoted by $\mathrm{CAT}_{\infty}$. A
quasi-category will be said to be small precisely if it is contained in $\mathrm{Cat}_{\infty}$. Smallness, largeness (and super-largeness) 
are relative notions; thus, to not carry around cardinals whose absolute size does not matter after all as all results apply to all regular 
cardinals polymorphically, ``smallness'' may refer to $\kappa$-smallness for some regular cardinal $\kappa$. In particular, we 
will work with a single universal fibration $\pi\colon\mathrm{Dat}_{\infty}\twoheadrightarrow\mathrm{Cat}_{\infty}$ which is to be thought of 
as a place-holder for the universal $\kappa$-small fibration
$\pi_{\kappa}\colon(\mathrm{Dat}_{\infty})_{\kappa}\twoheadrightarrow(\mathrm{Cat}_{\infty})_{\kappa}$ for any given regular cardinal
$\kappa$ (as introduced in \cite[Section 3.3.2]{luriehtt}). Furthermore, we will implicitly assume that a general base quasi-category 
is locally (essentially) small by definition.

Given a quasi-category $\mathcal{C}$ and an object $C\in\mathcal{C}$, we denote the associated slice quasi-category (called
\emph{overcategory} in \cite{luriehtt}) by $\mathcal{C}_{/C}$. To avoid awkward iterated hyphenations, we will 
refer to quasi-categories $\mathcal{C}$ together with an inclusion of simplicial sets into another quasi-category $\mathcal{D}$ 
as quasi-subcategories of $\mathcal{D}$. A full quasi-subcategory is hence such where the respective inclusion of simplicial 
sets is the identity on the sets of edges. 

The standard $n$-simplex in the category of simplicial sets will be denoted by $\Delta^n$.

There are two model structures on the category $\mathbf{S}$ of particular importance for this paper. These are the 
Quillen model structure $(\mathbf{S},\mathrm{Kan})$ for Kan complexes and the Joyal model structure
$(\mathbf{S},\mathrm{QCat})$ for quasi-categories.
\end{notation}

While the main results of this paper are invariant under the choice of any common model of $(\infty,1)$-category theory (see 
Section~\ref{secmodindep}), the author chose to work with the analytical presentation of quasi-categories for the sake of computations, 
argumentation and constructions by virtue of the extensive use of the results in the literature. As far as the results are 
concerned, the terms ``quasi-category'' and ``$\infty$-category'' will be used interchangeably.

\begin{acknowledgments*}
The author would like to thank John Bourke for multiple stimulating discussions, as well as an anonymous referee for many 
valuable comments. The initial impetus to rewrite the paper with some more general notion of comprehension (rather than the standard 
diagrammatic one only) at its center, and furthermore to define it in indexed rather than fibered terms is due to the same referee. The first 
draft of this paper was written at Masaryk University with support of the Grant Agency of the Czech Republic under the
grant 19-00902S. The current version of this paper was written as a guest at the Max Planck Institute for Mathematics in Bonn, Germany,
whose hospitality is greatly appreciated.
\end{acknowledgments*}

\section{Preliminaries on indexed and fibered $(\infty,1)$-category theory}\label{secpre}

In order to define and study comprehension schemes for cartesian fibrations of $\infty$-categories, in this section we 
cover, first, the underlying basic definitions and examples of indexed $\infty$-categories and cartesian 
fibrations, second, the $\infty$-categorical Grothendieck construction in form of Lurie's straightening and 
unstraightening pair, and, lastly, a few definitions and statements regarding representable natural transformations.
The presentation of the material up to the representable natural transformations is largely non-technical and mainly serves 
the purpose of introducing and organizing the notions and examples to be studied in later sections. The underlying calculations and proofs 
are contained in \cite{luriehtt} where all given $\infty$-categorical concepts are developed in detail. 

\begin{definition}\label{defindcat}
Let $\mathcal{C}$ be an $\infty$-category. A \emph{$\mathcal{C}$-indexed $\infty$-category} is a functor
$\mathcal{E}\colon\mathcal{C}^{op}\rightarrow \mathrm{Cat}_{\infty}$.
If $\mathcal{C}$ has a terminal object $1\in\mathcal{C}$, we say that $\mathcal{E}(1)$ is the underlying
$\infty$-category of $\mathcal{E}$.
A \emph{functor of $\mathcal{C}$-indexed $\infty$-categories} $\mathcal{E},\mathcal{F}$ is simply a morphism 
in the $\infty$-category $\mathrm{Fun}(\mathcal{C}^{op},\mathrm{Cat}_{\infty})$ of $\mathcal{C}$-indexed $\infty$-categories, that is, a 
natural transformation from $\mathcal{E}$ to $\mathcal{F}$.
Likewise, a $\mathcal{C}$-indexed $\infty$-groupoid is a functor $X\colon\mathcal{C}^{op}\rightarrow\mathcal{S}$.
The $\infty$-category of $\mathcal{C}$-indexed $\infty$-groupoids is the $\infty$-category
$\hat{\mathcal{C}}\simeq\mathrm{Fun}(\mathcal{C}^{op},\mathcal{S})$ of presheaves on $\mathcal{C}$.  	
\end{definition}

The following examples are generalizations of some of the 1-categorical examples considered in \cite[Section B1.2]{elephant}.

\begin{examples}\label{exmplebasicindcats} 
\begin{enumerate}
\item For $1\in\mathrm{Cat}_{\infty}$ the terminal $\infty$-category, the $\infty$-category
$\mathrm{Fun}(1^{op},\mathrm{Cat}_{\infty})$ of $1$-indexed $\infty$-categories is just the $\infty$-category $\mathrm{Cat}_{\infty}$ of 
small $\infty$-categories itself.
\end{enumerate}
Let $\mathcal{C}$ be an $\infty$-category.
\begin{enumerate}
\item[2.] Whenever $\mathcal{C}$ is small, the exponential
\[\mathrm{Fun}(\phv,\mathcal{C})\colon\mathrm{Cat}_{\infty}^{op}\rightarrow\mathrm{Cat}_{\infty}\]
is a $\mathrm{Cat}_{\infty}$-indexed $\infty$-category and is represented by $\mathcal{C}$ in an
$(\infty,2)$-categorical sense.
Its restriction to the category $\mathcal{S}$ of spaces is the ``naive'' $\mathcal{S}$-indexed $\infty$-category
$\mathrm{Fun}(\phv,\mathcal{C})\colon\mathcal{S}^{op}\rightarrow\mathrm{Cat}_{\infty}$. Its underlying
$\infty$-category is $\mathcal{C}$ itself.
\item[3.] If $\mathcal{C}$ is small and has pullbacks, the slice functor
$\mathcal{C}_{/(\cdot)}\colon\mathcal{C}^{op}\rightarrow\mathrm{Cat}_{\infty}$ is the ``canonical indexing of
$\mathcal{C}$ over itself''. It takes an object $C\in\mathcal{C}$ to the sliced $\infty$-category $\mathcal{C}_{/C}$ and a morphism
$f\colon C\rightarrow D$ in $\mathcal{C}$ to the pullback functor $f^{\ast}\colon \mathcal{C}_{/D}\rightarrow\mathcal{C}_{/C}$. Technically, 
it is defined as the unstraightening of the target fibration over $\mathcal{C}$ (Example~\ref{explscartfibs}.2). Again, its 
underlying $\infty$-category is $\mathcal{C}$ whenever $\mathcal{C}$ is left exact. When applied to $\mathcal{C}\simeq\mathcal{S}$, the naive 
indexing $\mathrm{Fun}(\phv,\mathcal{S})\colon\mathcal{S}^{op}\rightarrow\mathrm{CAT}_{\infty}$ and 
the canonical indexing $\mathcal{S}_{/(\cdot)}\colon\mathcal{S}^{op}\rightarrow\mathrm{CAT}_{\infty}$ are pointwise 
equivalent by the (unmarked) straightening construction over Kan complexes \cite[Theorem 2.2.1.2]{luriehtt}. The 
straightening construction will be discussed below in more generality.

\item[4.] Given a $\mathcal{C}$-indexed $\infty$-category
$\mathcal{E}\colon\mathcal{C}^{op}\rightarrow\mathrm{Cat}_{\infty}$, each functor
$F\colon\mathcal{D}\rightarrow\mathcal{C}$ induces a change of base
$F^{\ast}\mathcal{E}\colon\mathcal{D}^{op}\rightarrow\mathrm{Cat}_{\infty}$ via precomposition with $F^{op}$. It defines a functor
\[F^{\ast}\colon\mathrm{Fun}(\mathcal{C}^{op},\mathrm{Cat}_{\infty})\rightarrow\mathrm{Fun}(\mathcal{D}^{op},\mathrm{Cat}_{\infty}).\]

\item[5.] The most fundamental examples of indexed $\infty$-groupoids are the representable presheaves
$y(C)=\mathcal{C}(\phv,C)$. Whenever $\mathcal{C}$ has finite limits, they are 
the archetypes of ``small'' indexed $\infty$-groupoids over $\mathcal{C}$ and as such they will serve as reference 
structures in the definition of comprehension schemes in Section~\ref{seccomp}. 

\item[6.] If $\mathcal{C}$ has finite products, we obtain an action
$(-)^{(\cdot)}\colon\mathcal{C}^{op}\times\mathrm{Fun}(\mathcal{C}^{op},\mathrm{Cat}_{\infty})\rightarrow\mathrm{Fun}(\mathcal{C}^{op},\mathrm{Cat}_{\infty})$ given on objects 
by $\mathcal{E}^{C}(C\sprime)=\mathcal{E}(C\sprime\times C)$. It is the Currying of the change of base along the product
$\times\colon\mathcal{C}\times\mathcal{C}\rightarrow\mathcal{C}$. Whenever $\mathcal{C}$ is small, this action induces a functor
\[[\phv,\phv]\colon\mathrm{Fun}(\mathcal{C}^{op},\mathrm{Cat}_{\infty})^{op}\times\mathrm{Fun}(\mathcal{C}^{op},\mathrm{Cat}_{\infty})\rightarrow\hat{\mathcal{C}}\]
as the curried adjoint of the composition of
\[\mathrm{Fun}(\mathcal{C}^{op},\mathrm{Cat}_{\infty})^{op}\times\mathcal{C}^{op}\times\mathrm{Fun}(\mathcal{C}^{op},\mathrm{Cat}_{\infty})\xrightarrow{1\times(-)^{(\cdot)}}\mathrm{Fun}(\mathcal{C}^{op},\mathrm{Cat}_{\infty})^{op}\times\mathrm{Fun}(\mathcal{C}^{op},\mathrm{Cat}_{\infty})\]
with the hom-functor
$\mathrm{Hom}\colon\mathrm{Fun}(\mathcal{C}^{op},\mathrm{Cat}_{\infty})^{op}\times\mathrm{Fun}(\mathcal{C}^{op},\mathrm{Cat}_{\infty})\rightarrow\mathcal{S}$.
For $\mathcal{C}$-indexed $\infty$-categories $\mathcal{E}$, $\mathcal{F}$ one may thus form the
$\mathcal{C}$-indexed $\infty$-groupoid $[\mathcal{E},\mathcal{F}]$ of
$\mathcal{C}$-indexed functors between them. It is given pointwise by the mapping spaces
\[[\mathcal{E},\mathcal{F}](C)=\mathrm{Fun}(\mathcal{C}^{op},\mathrm{Cat}_{\infty})(\mathcal{E},\mathcal{F}^C),\]
its underlying $\infty$-category is the mapping space $\mathrm{Fun}(\mathcal{C}^{op},\mathrm{Cat}_{\infty})(\mathcal{E},\mathcal{F})$.

Generally, given that $\mathcal{C}$ is small, it follows for instance from the Yoneda lemma for indexed
$\infty$-categories \cite[Theorem 5.7.3]{riehlverityelements} that the $\infty$-category
$\mathrm{Fun}(\mathcal{C}^{op},\mathrm{Cat}_{\infty})$ is cartesian closed (see e.g.\
\cite[Section 9]{barwicketalparahct}; to make sense of this however requires to consider
$\mathrm{Fun}(\mathcal{C}^{op},\mathrm{Cat}_{\infty})$ together with its canonical $(\infty,2)$-categorical structure). Then, whenever
$\mathcal{C}$ has finite products, for $\mathcal{F}\in\mathrm{Fun}(\mathcal{C}^{op},\mathrm{Cat}_{\infty})$ and $C\in\mathcal{C}$ the
$\mathcal{C}$-indexed $\infty$-category $\mathcal{F}^C$ computes the exponential
$\mathcal{F}^{y(C)}\in\mathrm{Fun}(\mathcal{C}^{op},\mathrm{Cat}_{\infty})$, and $[\mathcal{E},\mathcal{F}]$ computes pointwise the maximal
$\infty$-subgroupoids contained in the exponential $\mathcal{F}^{\mathcal{E}}\in\mathrm{Fun}(\mathcal{C}^{op},\mathrm{Cat}_{\infty})$.
\end{enumerate}
\end{examples}

In ordinary category theory -- that is, the $2$-category theory of 1-categories -- the Grothendieck construction of 
pseudo-functors into the category $\mathrm{Cat}$ of small categories yields a 2-categorical equivalence between 
indexed category theory and fibered category theory, see e.g.\ \cite[Theorem 1.3.6]{elephant}. An
$(\infty,1)$-categorical analogue of the Grothendieck construction was first given by Lurie's (un)straightening 
construction \cite[Section 3.2]{luriehtt}, which provides a very general framework to pass 
between cartesian fibrations over a simplicial set $S$ and functors from $S^{op}$ into the $\infty$-category
$\mathrm{Cat}_{\infty}$. This construction yields an equivalence
\[\xymatrix{
\text{St}_{\mathcal{C}}\colon\text{Cart}(\mathcal{C})\ar@<1ex>[r]\ar@{}[r]|(.4){\simeq} & \text{Fun}(\mathcal{C}^{op},\mathrm{Cat}_{\infty})\colon\mathrm{Un}_{\mathcal{C}}\ar@<1ex>[l]
}\]
between the $\infty$-category of cartesian fibrations and cartesian functors over a small $\infty$-category
$\mathcal{C}$ \cite[Definition 2.4.2.1]{luriehtt} and the $\infty$-category of $\mathcal{C}$-indexed small
$\infty$-categories. The fibers of the fibration $\text{Un}_{\mathcal{C}}(\mathcal{E})\in\mathrm{Cart}(\mathcal{C})$ 
are equivalent to the values of the functor $\mathcal{E}\colon\mathcal{C}^{op}\rightarrow\mathrm{Cat}_{\infty}$ on 
the objects of $\mathcal{C}$. Furthermore, the construction yields an equivalence
\[\xymatrix{
\text{St}_{\mathcal{C}}\colon\text{RFib}(\mathcal{C})\ar@<1ex>[r]\ar@{}[r]|(.45){\simeq} & \text{Fun}(\mathcal{C}^{op},\mathcal{S})\colon\mathrm{Un}_{\mathcal{C}}\ar@<1ex>[l]
}\]
between the $\infty$-category of right fibrations over $\mathcal{C}$ \cite[Section 2]{luriehtt} and the
$\infty$-category of $\mathcal{C}$-indexed $\infty$-groupoids (presheaves over $\mathcal{C}$, that is). We will omit 
the index $\mathcal{C}$ and simply write $\mathrm{St}$ and $\mathrm{Un}$ to refer to the respective straightening and 
unstraightening functors whenever the base $\mathcal{C}$ is clear from context.
In this sense, indexed $\infty$-category theory and fibered $\infty$-category theory are equivalent notions for all 
purposes of this paper.\footnote{In Section~\ref{secsubcartfib} we will implicitly need that unstraightening in fact
constitutes an equivalence of $(\infty,2)$-categories; this will be addressed in due course however.}

\begin{definition}\label{defcore}
Let $(\cdot)^{\simeq}\colon\mathrm{Cat}_{\infty}\rightarrow\mathcal{S}$ be the functor that assigns to each
$\infty$-category its core, i.e.\ $\mathcal{C}^{\simeq}$ is the largest $\infty$-groupoid contained in $\mathcal{C}$.
Then postcomposition of an indexed $\infty$-category
$\mathcal{E}\colon\mathcal{C}^{op}\rightarrow\mathrm{Cat}_{\infty}$ with this core functor yields a presheaf
$\mathcal{E}^{\simeq}\colon\mathcal{C}^{op}\rightarrow\mathcal{S}$.

Given a cartesian fibration $p\colon\mathcal{E}\twoheadrightarrow\mathcal{C}$, we denote the unstraightening of
$\mathrm{St}(p)^{\simeq}$ by $p^{\times}\colon\mathcal{E}^{\times}\twoheadrightarrow\mathcal{C}$ and call it \emph{the core 
of $p$}.
\end{definition}

Given a cartesian fibration $p\colon\mathcal{E}\twoheadrightarrow\mathcal{C}$, the domain $\mathcal{E}^{\times}$ is an
$\infty$-category itself. It can be constructed as the wide $\infty$-subcategory $\mathcal{E}^{\times}\subseteq\mathcal{E}$ spanned by the 
$p$-cartesian morphisms in $\mathcal{E}$. The restriction $p|_{\mathcal{E}^{\times}}$ is a right fibration by 
\cite[Corollary 2.4.2.5]{luriehtt} and is equivalent to the right fibration $p^{\times}$. Thus, the
$p^{\times}$-cartesian morphisms in $\mathcal{E}^{\times}$ are exactly the $p$-cartesian morphisms in
$\mathcal{E}$, and the vertical morphisms in $\mathcal{E}^{\times}$ are exactly the vertical equivalences in $\mathcal{E}$. 

\begin{examples}\label{explscartfibs}
Let $\mathcal{C}$ be an $\infty$-category.
\begin{enumerate}
\item Whenever $\mathcal{C}$ is small, the forgetful functor
$s\colon(\mathrm{Cat}_{\infty})_{/\mathcal{C}}\twoheadrightarrow\mathrm{Cat}_{\infty}$ is the 
unstraightening of the representable functor
$\mathrm{Cat}_{\infty}(\phv,\mathcal{C})\colon\mathrm{Cat}_{\infty}^{op}\rightarrow\mathcal{S}$. It therefore gives an 
example of a representable fibration as to be defined below. Similarly, the forgetful functor
$s\colon\mathcal{S}_{/\mathcal{C}}\twoheadrightarrow\mathcal{S}$ is the unstraightening of the restriction
$\mathrm{Cat}_{\infty}(\phv,\mathcal{C})\colon\mathcal{S}^{op}\rightarrow\mathcal{S}$.

More generally, we may consider the unstraightening of the exponential
\[\text{Fun}(\phv,\mathcal{C})\colon\mathrm{Cat}_{\infty}^{op}\rightarrow\mathrm{Cat}_{\infty}\]
and the unstraightening of the naive indexing
\[\mathrm{Fun}(\phv,\mathcal{C})\colon\mathcal{S}^{op}\rightarrow\mathrm{Cat}_{\infty}\]
of $\mathcal{C}$. These are the fibration of diagrams and the fibration of families of $\mathcal{C}$, respectively, which generalize the 
classic construction of the 1-category $\mathrm{Fam}(\mathbb{C})$ of families over a 1-category $\mathbb{C}$. By construction, the core of 
the fibration of diagrams of $\mathcal{C}$ over $\mathrm{Cat}_{\infty}$ is exactly the representable 
fibration $s\colon(\mathrm{Cat}_{\infty})_{/\mathcal{C}}\twoheadrightarrow\mathrm{Cat}_{\infty}$.
\item The codomain functor $t\colon\mathrm{Fun}(\Delta^1,\mathcal{C})\twoheadrightarrow \mathcal{C}$ is a cartesian 
fibration whenever $\mathcal{C}$ has pullbacks. If $\mathcal{C}$ is also small, its unstraightening is the canonical indexing of
$\mathcal{C}$ over itself (by definition). It therefore will be called the canonical fibration over $\mathcal{C}$.

\item Change of base of an indexed $\infty$-category $\mathcal{E}\colon\mathcal{C}^{op}\rightarrow\mathrm{Cat}_{\infty}$ 
along a functor $F\colon\mathcal{D}\rightarrow\mathcal{C}$ unstraightens to the pullback of fibrations
\[\xymatrix{
\text{Un}(F^{\ast}\mathcal{I})\ar@{->>}[d]\ar[r]\ar@{}[dr]|(.3){\pbs} & \text{Un}(\mathcal{I})\ar@{->>}[d] \\
\mathcal{D}\ar[r]_F & \mathcal{C}.
}\]
This follows directly from \cite[Proposition 3.2.1.4]{luriehtt}. 

Up to cartesian equivalence, every cartesian fibration $p\colon\mathcal{E}\twoheadrightarrow\mathcal{C}$ with essentially small fibers is the 
change of base of the universal cartesian fibration
\[\pi^{op}\colon\mathrm{Dat}_{\infty}^{op}\twoheadrightarrow\mathrm{Cat}_{\infty}^{op}\]
along its straightening $\mathrm{St}(p)^{op}\colon\mathcal{C}\rightarrow\mathrm{Cat}_{\infty}^{op}$. The universal cocartesian fibration
$\pi\colon\mathrm{Dat}_{\infty}\twoheadrightarrow\mathrm{Cat}_{\infty}$ itself is the unstraightening of the identity
$\mathrm{id}\colon\mathrm{Cat}_{\infty}\rightarrow\mathrm{Cat}_{\infty}$ \cite[Section 3.3.2]{luriehtt}.

In particular, whenever $\mathcal{C}$ is small and has pullbacks, the canonical fibration
$t\colon\mathrm{Fun}(\Delta^1,\mathcal{C})\twoheadrightarrow \mathcal{C}$ is equivalent to the change of base of the 
universal cartesian fibration along the canonical indexing of $\mathcal{C}$. In particular, we obtain a
homotopy-cartesian square
\[\xymatrix{
\mathrm{Fun}(\Delta^1,\mathcal{C})\ar@{->>}[d]_t\ar[r] & \mathrm{Dat}_{\infty}^{op}\ar@{->>}[d]^{\pi^{op}} \\
\mathcal{C}\ar[r]_{\mathcal{C}_{/(\cdot)}} & \mathrm{Cat}_{\infty}^{op}
}\]
in the Joyal model structure $(\mathbf{S},\mathrm{QCat})$.

Change of base along some functor $F\colon\mathcal{D}\rightarrow\mathcal{C}$ of the canonical indexing of
$\mathcal{C}$ over itself gives rise to the ``Artin gluing'' of $F$:
\[\xymatrix{
\mathcal{C}\downarrow F \ar@{->>}[d]_{\text{gl}(F)}\ar[r]\ar@{}[dr]|(.3){\pbs} & \mathrm{Fun}(\Delta^1,\mathcal{C})\ar@{->>}[d]^t \\
\mathcal{D}\ar[r]_F & \mathcal{C}.
}\]
One computes that the $\infty$-category of cartesian sections of the Artin gluing $\mathrm{gl}(F)$ is the
$\infty$-category of cartesian natural transformations over $F$, and as such it is equivalent to the limit of the 
functor $F^{\ast}(\mathcal{C}_{/(\cdot)})\colon\mathcal{D}^{op}\rightarrow\mathrm{Cat}_{\infty}$ 
\cite[Section 3.3.3]{luriehtt}. The localization of the $\infty$-category $\mathcal{C}\downarrow F$ at the class of
$\mathrm{gl}(F)$-cartesian morphisms is equivalent to the colimit of the very same functor
\cite[Section 3.3.4]{luriehtt}.

\item Similarly, up to cartesian equivalence, every right fibration with small fibers is the change of base of the universal 
right fibration $\mathcal{S}_{\ast}^{op}\twoheadrightarrow\mathcal{S}^{op}$ along its straightening.

For example, every object $C\in\mathcal{C}$ yields a functor
$\mathrm{ev}(C)\colon\hat{\mathcal{C}}\rightarrow\mathcal{S}$ by evaluation of presheaves over $\mathcal{C}$ at $C$. If 
by $y\colon\mathcal{C}\rightarrow\hat{\mathcal{C}}$ we denote the Yoneda embedding, the associated right fibration is 
given by the forgetful functor $s\colon\hat{\mathcal{C}}^{op}_{/y(C)}\twoheadrightarrow\hat{\mathcal{C}}^{op}$. That 
means the square
\[\xymatrix{
\hat{\mathcal{C}}_{y(C)/}\ar@{->>}[d]\ar[r] & \mathcal{S}_{\ast}\ar@{->>}[d] \\
\hat{\mathcal{C}}\ar[r]_{\mathrm{ev}(C)} & \mathcal{S}
}\]
is homotopy-cartesian in the Joyal model structure. This is shown in \cite[Theorem 5.1.5.2]{luriehtt}, but also
follows from Corollary~\ref{corextexchange} and the fact that the evaluation functor at $C$ has a left adjoint.
\end{enumerate}
\end{examples}

\begin{definition}
A right fibration $p\colon\mathcal{E}\twoheadrightarrow\mathcal{C}$ is \emph{representable} if there is a representable presheaf over
$\mathcal{C}$ whose unstraightening is equivalent to $p$ over $\mathcal{C}$.
\end{definition}

Thus, up to equivalence, the representable right fibrations over an $\infty$-category $\mathcal{C}$ are exactly the right fibrations 
of the form $s\colon\mathcal{C}_{/C}\twoheadrightarrow\mathcal{C}$ for objects $C$ in $\mathcal{C}$. To verify whether a given right 
fibration is representable without having to compute its straightening, we may apply the following lemma.

\begin{lemma}[{\cite[Proposition 4.4.4.5]{luriehtt}}]\label{lemmareprightfibs}
A right fibration $p\colon\mathcal{E}\twoheadrightarrow\mathcal{C}$ is representable if and only if
the domain $\mathcal{E}$ has a terminal object $t\in\mathcal{E}(C)$; then $\mathcal{E}\simeq\mathcal{C}_{/C}$ over $\mathcal{C}$.\qed
\end{lemma}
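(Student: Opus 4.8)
The plan is to pass through the Straightening equivalence recalled above and reduce the assertion to a statement about the presheaf $F:=\mathrm{St}(p)\colon\mathcal{C}^{op}\to\mathcal{S}$, whose unstraightening is $p$ and whose value $F(D)$ is equivalent to the fibre $\mathcal{E}_D$. By definition $p$ is representable precisely when $F$ is a representable presheaf, and by the $\infty$-categorical Yoneda lemma a representation amounts to an object $C\in\mathcal{C}$ together with a point $t\in F(C)$ whose induced natural transformation $\phi_t\colon\mathcal{C}(\blank,C)\to F$ is an equivalence. Unstraightening $\phi_t$ yields a map of right fibrations $\Phi_t\colon\mathcal{C}_{/C}\to\mathcal{E}$ over $\mathcal{C}$ carrying $\mathrm{id}_C$ to the point $t\in\mathcal{E}_C$. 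Since a map between right fibrations over $\mathcal{C}$ is an equivalence if and only if it is a fibrewise equivalence (equivalences in $\mathrm{Fun}(\mathcal{C}^{op},\mathcal{S})$ are detected pointwise), the whole statement comes down to comparing ``$t$ is terminal in $\mathcal{E}$'' with ``$\phi_t$ is a pointwise equivalence''.

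First I would dispatch the easy direction. If $p$ is representable, then $F\simeq\mathcal{C}(\blank,C)$, so $\mathcal{E}\simeq\mathrm{Un}(\mathcal{C}(\blank,C))\simeq\mathcal{C}_{/C}$ as recalled after the definition of representable fibrations, and $\mathcal{C}_{/C}$ has the terminal object $\mathrm{id}_C$ lying over $C$; transporting along the equivalence produces the required terminal object of $\mathcal{E}$, and the identification $\mathcal{E}\simeq\mathcal{C}_{/C}$ is exactly the one just used.

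The substantive direction, and the main obstacle, is to show that the existence of a terminal object $t\in\mathcal{E}$ with $p(t)=C$ forces $F$ to be represented by $(C,t)$. The key computation I would carry out is a mapping-space formula: for any $e\in\mathcal{E}_D$, using that $p$ is a right fibration (so every edge of $\mathcal{E}$ is $p$-cartesian and every fibre $\mathcal{E}_D\simeq F(D)$ is an $\infty$-groupoid), each morphism $e\to t$ lying over a given $f\colon D\to C$ factors essentially uniquely through the cartesian lift $f^{\ast}t\to t$ by a vertical map $e\to f^{\ast}t$ in the groupoid $\mathcal{E}_D$. Organising these fibrewise over $\mathrm{Map}_{\mathcal{C}}(D,C)$ identifies
\[
\mathrm{Map}_{\mathcal{E}}(e,t)\;\simeq\;\mathrm{hofib}_{e}\big(\mathrm{Map}_{\mathcal{C}}(D,C)\to F(D)\big),
\]
where the displayed map sends $f\colon D\to C$ to $F(f)(t)\in F(D)$ and is exactly the component of $\phi_t$ at $D$. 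I expect the careful handling of these $\infty$-categorical mapping spaces and of the cartesian factorisation to be the technical crux; alternatively one can quote the standard description of mapping spaces in right fibrations from \cite[\S 2.4]{luriehtt} to obtain the same fibre sequence directly.

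With the formula in hand the conclusion is immediate. By definition $t$ is terminal in $\mathcal{E}$ if and only if $\mathrm{Map}_{\mathcal{E}}(e,t)$ is contractible for every object $e$, i.e.\ for every $D$ and every $e\in\mathcal{E}_D$. By the formula this holds exactly when all homotopy fibres of $\mathrm{Map}_{\mathcal{C}}(D,C)\to F(D)$ are contractible for every $D$; and since $e$ ranges over \emph{all} points of $F(D)$, that is precisely the condition that each component $\phi_{t,D}$ be an equivalence of spaces. Hence ``$t$ terminal'' is equivalent to ``$\phi_t$ a pointwise equivalence'', i.e.\ to $F$ being representable by $C$. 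Unstraightening the resulting equivalence $\mathcal{C}(\blank,C)\simeq F$ finally gives $\mathcal{C}_{/C}\simeq\mathcal{E}$, as claimed.
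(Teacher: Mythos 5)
Your argument is correct and is essentially the standard proof of this statement; the paper itself offers no proof but simply cites \cite[Proposition 4.4.4.5]{luriehtt}, whose argument likewise reduces terminality of $t$ to the pointwise contractibility of the homotopy fibres of the Yoneda map $\mathrm{Map}_{\mathcal{C}}(D,C)\to F(D)$ via the cartesian-lift factorisation you describe. The one step worth making fully explicit is the mapping-space fibre sequence for right fibrations, which you correctly flag as the technical crux and which is available in \cite[2.4.4]{luriehtt}.
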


\begin{definition}\label{defrepnattransf}
Let $\mathcal{C}$ be an $\infty$-category. A natural transformation $f\colon X\rightarrow Y$ in $\hat{\mathcal{C}}$ is 
\emph{representable} if for every object $C\in\mathcal{C}$ and every natural transformation $v\colon y(C)\rightarrow Y$, the pullback
$v^{\ast}X\in\hat{\mathcal{C}}$ is a representable presheaf.
\end{definition}

Given an $\infty$-category $\mathcal{C}$ and an object $C\in\mathcal{C}$, there is a natural equivalence
$\hat{\mathcal{C}}_{/y(C)}\simeq\widehat{\mathcal{C}_{/C}}$ of $\infty$-categories. Under this equivalence, a natural transformation 
$f\colon X\rightarrow Y$ in $\hat{\mathcal{C}}$ is representable as in Definition~\ref{defrepnattransf} if and only if for all 
generalized elements $v\colon y(C)\rightarrow Y$, the pullback $v^{\ast}f\in\widehat{\mathcal{C}_{/C}}$ is a representable presheaf.

\begin{example}\label{explerepslenattransfoverpt}
Let $\mathcal{C}$ be an $\infty$-category. By definition, a natural transformation $X\rightarrow\ast$ over the terminal presheaf in
$\hat{\mathcal{C}}$ is representable if and only if for all objects $C\in\mathcal{C}$ the product $y(C)\times X$ is a representable presheaf. 
Whenever $\mathcal{C}$ has finite products, this holds if and only if the presheaf $X$ is representable itself. Indeed, the Yoneda embedding 
is limit preserving and so if every $y(C)\times X$ is representable, then in particular so is $y(\ast)\times X\simeq \ast\times X\simeq X$. 
And if $X\simeq y(D)$ for some $D\in\mathcal{C}$, then $y(C)\times X\simeq y(C)\times y(D)\simeq y(C\times D)$ is representable as well.
\end{example}

\begin{proposition}\label{proprepcartmaps}
Let $\mathcal{C}$ be an $\infty$-category. A natural transformation $f\colon X\rightarrow Y$ in $\hat{\mathcal{C}}$ is 
representable if and only if its unstraightening
\[\mathrm{Un}(f)\colon\mathrm{Un}(X)\rightarrow\mathrm{Un}(Y)\]
over $\mathcal{C}$ has a (generally not fibered) right adjoint.
\end{proposition}
\begin{proof}
The functor $\mathrm{Un}(f)\colon\mathrm{Un}(X)\rightarrow\mathrm{Un}(Y)$ has a right adjoint (as a functor of total $\infty$-categories) if 
and only if for every vertex $v\in\mathrm{Un}(Y)$, the comma $\infty$-category $\mathrm{Un}(f)\downarrow v$ defined as the pullback
\begin{align}\label{diagproprepcartmaps}
\begin{gathered}
\xymatrix{
\mathrm{Un}(f)\downarrow v\ar@{->}[rr]\ar@{->>}[d]\ar@{}[drr]|(.3){\pbs} & & \mathrm{Un}(Y)_{/v}\ar@{->>}[d]^s \\
\mathrm{Un}(X)\ar@{->}[rr]^{\mathrm{Un}(f)}\ar@{->>}@/_/[dr]_p & & \mathrm{Un}(Y)\ar@{->>}@/^/[dl]^q \\
 & \mathcal{C} & 
}
\end{gathered}
\end{align}
has a terminal object (\cite[Proposition 3.1.2]{nrsadjoint} and \cite[Proposition 6.1.11]{cisinskibook}).
The projection $s\colon\mathrm{Un}(Y)_{/v}\twoheadrightarrow\mathrm{Un}(Y)$ is a right fibration by
\cite[Corollary 2.1.2.2]{luriehtt}, and hence so is the composite
\[\mathrm{Un}(f)\downarrow v\twoheadrightarrow\mathrm{Un}(X)\twoheadrightarrow\mathcal{C}.\]
Thus, by Lemma~\ref{lemmareprightfibs}, the $\infty$-category $\mathrm{Un}(f)\downarrow v$ has a terminal object if and only if this 
composite right fibration is representable. We hence want to show that $f$ is a representable natural transformation if and only if 
the right fibration $\mathrm{Un}(f)\downarrow v\twoheadrightarrow\mathcal{C}$ is representable for all $v\in Y$.

For every $v\in\mathrm{Un}(Y)$, the cartesian square (\ref{diagproprepcartmaps}) is homotopy-cartesian in the 
contravariant model structure on $\mathbf{S}_{/\mathcal{C}}$ essentially by an application of \cite[Proposition 4.1.2.18]{luriehtt}.
Furthermore, the right fibration $\mathrm{Un}(Y)\twoheadrightarrow\mathcal{C}$ induces a trivial Kan fibration
\begin{align}\label{diagproprepcartmaps2}
\begin{gathered}
\xymatrix{
\mathrm{Un}(Y)_{/v}\ar@{->>}[r]^{\sim}\ar@{->>}[d]_s & \mathcal{C}_{/q(v)}\ar@{->>}[d]^s\ar@{-->}[dl]^{\ulcorner v\urcorner} \\
\mathrm{Un}(Y)\ar@{->>}[r] & \mathcal{C}
}
\end{gathered}
\end{align}
of slice $\infty$-categories such that the outer square commutes. A dotted natural transformation
$\ulcorner v\urcorner\colon\mathcal{C}_{/q(v)}\rightarrow\mathrm{Un}(Y)$ over $\mathcal{C}$ as depicted in (\ref{diagproprepcartmaps2}) such 
that the top triangle commutes up to homotopy is hence given by any section of the trivial fibration on the top.
As straightening over $\mathcal{C}$ is an equivalence of $\infty$-categories and hence preserves pullback squares, it follows from 
(\ref{diagproprepcartmaps}) and (\ref{diagproprepcartmaps2}) that the induced square
\[
\xymatrix{
\mathrm{St}(\mathrm{Un}(f)\downarrow v)\ar[r]\ar[d] & y(q(v))\ar[d]^{\mathrm{St}(\ulcorner v\urcorner)} \\
X\ar[r]^{f} & Y
}\]
is cartesian in $\hat{\mathcal{C}}$.

Thus, whenever $f$ is a representable natural transformation, it follows that for all $v\in\mathrm{Un}(Y)$ the right fibration
$\mathrm{Un}(f)\downarrow v\twoheadrightarrow\mathcal{C}$ is representable, and so the functor
$\mathrm{Un}(f)\colon\mathrm{Un}(X)\rightarrow\mathrm{Un}(Y)$ has a right adjoint as stated above. Vice versa, suppose 
$\mathrm{Un}(f)$ has a right adjoint, and let $\ulcorner v\urcorner\colon y(C)\rightarrow Y$ be some natural transformation in
$\hat{\mathcal{C}}$ for $C\in\mathcal{C}$. We obtain a functor
$\mathrm{Un}(\ulcorner v\urcorner)\colon\mathcal{C}_{/C}\rightarrow\mathrm{Un}(Y)$ of right fibrations 
over $\mathcal{C}$ and hence a vertex $v:=\mathrm{Un}(\ulcorner v\urcorner)(\mathrm{id}_C)$ in the fiber over $C$ together with a trivial 
Kan fibration as in (\ref{diagproprepcartmaps2}). It follows that
$\mathrm{St}(\mathrm{Un}(f)\downarrow v)\simeq(\ulcorner v\urcorner)^{\ast}X$ in $\hat{\mathcal{C}}$. But the right fibration
$\mathrm{Un}(f)\downarrow v\twoheadrightarrow\mathcal{C}$ is representable since $\mathrm{Un}(f)$ is assumed to have a right adjoint, and 
hence so is the pullback $(\ulcorner v\urcorner)^{\ast}X$ in $\hat{\mathcal{C}}$.
\end{proof}

\begin{remark}\label{remclassobj}
To elaborate on the interpretation of representable natural transformations as general comprehension schemes in Definition~\ref{defgencomp},
let $\mathcal{C}$ be an $\infty$-category and $f\colon X\rightarrow Y$ be a representable natural transformation in $\hat{\mathcal{C}}$. 
Given objects $C\in\mathcal{C}$ and $v\in Y(C)$ represented by some $\ulcorner v\urcorner\colon y(C)\rightarrow Y$, we obtain a pullback 
square of the form
\[\xymatrix{
y(C.(f,v))\ar[r]^(.6){\ulcorner x.v \urcorner}\ar[d]_{\varepsilon_{(f,v)}}\ar@{}[dr]|(.3){\pbs} & X\ar[d]^f \\
y(C)\ar[r]_{\ulcorner v\urcorner} & Y
}\]
in $\hat{\mathcal{C}}$. Following the proof of Lemma~\ref{proprepcartmaps}, the object $x.v\in X(C.(f,v))$ together with the morphism
$\varepsilon_{(f,v)}\colon C.(f,v)\rightarrow C$ gives rise to a terminal object in the comma $\infty$-category
$\mathrm{Un}(f)\downarrow v$. Algebraically, this pair can be understood as the universal partial $X$-structure on $C$ which lifts the
$Y$-structure $v$ on $C$ along $f$. That is to say that whenever some $X$-structure $w\in X(D)$ on some 
object $D\in\mathcal{C}$ is such that the projection $f(w)\in Y(D)$ is (equivalent to) the restriction $g^{\ast}v$ for some morphism
$g\colon D\rightarrow C$, then it already is the restriction of the $X$-structure $x.v\in X(C.(f,v))$ along some essentially uniquely 
assigned morphism $D\rightarrow C.(f,v)$ in $\mathcal{C}_{/C}$. In type theoretic terms, the morphism
$\varepsilon_{(f,v)}\colon C.(f,v)\rightarrow C$ can be understood as the extension of the context $C$ by the type $v$ in as much as it is 
the universal context morphism along which the reindexing of the type $v\in Y(C)$ has a term $x.v\in X$ (given syntactically by the 
associated Variable Introduction rule $c:C, x:v\vdash x:v$).
\end{remark}

\begin{lemma}\label{lemmarepnattransfbasics}
Let $\mathcal{C}$ be an $\infty$-category.
\begin{enumerate}
\item The class of representable natural transformations in $\hat{\mathcal{C}}$ is closed under composition, is stable under pullbacks and contains all equivalences.
\item Whenever $\mathcal{C}$ has pullbacks, the class of representable natural transformations in $\hat{\mathcal{C}}$ (considered as a full
$\infty$-subcategory of $\mathrm{Fun}(\Delta^1,\hat{\mathcal{C}})$) furthermore is closed under finite limits. In particular, it has the left 
cancellation property.
\item Suppose $\mathcal{D}$ is an $\infty$-category with pullbacks and $F\colon\mathcal{D}\rightarrow\mathcal{C}$ is a functor with a 
right adjoint. Then the restriction functor $F^{\ast}\colon\hat{\mathcal{C}}\rightarrow\hat{\mathcal{D}}$ preserves representability of 
natural transformations.
\end{enumerate}
\end{lemma}
\begin{proof}
Part 1 is straightforward.
For Part 2, given a presheaf $X$ and a span of representable natural transformations 
$f_1\rightarrow f_3\leftarrow f_2$ in $\hat{\mathcal{C}}_{/X}$, the pullback of the limit $f_1\times_{f_3}f_2$ in
$\hat{\mathcal{C}}_{/X}$ along an element $x\colon y(C)\rightarrow X$ is represented by the according fiber product of the representatives in
$\mathcal{C}_{/C}$ of the elements $x^{\ast}f_i\in\hat{\mathcal{C}}_{/y(C)}$. Since equivalences are representable natural transformations, 
it follows that the class of representable natural transformations is fiberwise closed under finite limits.  As it is stable under pullbacks 
by Part 1 as well, it thus is closed under all finite limits (as in \cite[Proposition 3.3]{binimkelly}). The left cancellation property 
follows formally, see e.g.\ \cite[Lemma 3.1.15]{abfjsheavesI}. 

For Part 3, we first note that $F\colon\mathcal{D}\rightarrow\mathcal{C}$ has a right adjoint if and only if the restriction functor
$F^{\ast}\colon\hat{\mathcal{C}}\rightarrow\hat{\mathcal{D}}$ preserves representables. This is essentially
\cite[Proposition 3.1.2]{nrsadjoint} together with Lemma~\ref{lemmareprightfibs} and Example~\ref{explscartfibs}.3. Now suppose $\mathcal{D}$ 
has pullbacks and $g\colon X\rightarrow Y$ is a representable natural transformation in $\hat{\mathcal{C}}$. Let $D\in\mathcal{D}$ and 
$f\colon y(D)\rightarrow F^{\ast}Y$ be a natural transformation in $\hat{\mathcal{D}}$. 
By the Yoneda lemma we obtain a natural transformation $\bar{f}\colon y(FD)\rightarrow Y$ together with a diagram 
\begin{align}\label{diaglemmarepnattransfbasics}
\begin{gathered}
\xymatrix{
\bullet\ar[dd]\ar[rr]\ar@{}[ddrr]_(.2){\pbs}\ar[dr] & & F^{\ast}X\ar[dd]^{F^{\ast}g} \\
 & \bullet\ar[ur]\ar[dd]\ar@{}[dr]|(.3){\pbs} & \\
y(D)\ar[dr]_{\ulcorner 1_{FD}\urcorner}\ar[rr]^(.3)f|(.485)\hole & & F^{\ast}Y \\
 & F^{\ast}y(FD)\ar[ur]_{F^{\ast}\bar{f}} & \\
}
\end{gathered}
\end{align}
in $\hat{\mathcal{D}}$, where the bullet points denote the obvious fiber products. The factorization of $f$ on the bottom always exists, but 
one may note that it also follows directly from the existence of the left adjoint $F_!\colon\hat{\mathcal{D}}\rightarrow\hat{\mathcal{C}}$ of 
$F^{\ast}$ whenever $\mathcal{D}$ is small (as $F_!$ automatically preserves representables by construction). As the natural transformation 
$g$ is representable, the pullback $\bar{f}^{\ast}X$ is represented by some object $C\in\mathcal{C}$. 
And as $F$ has a right adjoint, both restrictions $F^{\ast}y(FD)$ and $F^{\ast}y(C)$ are representable over $\mathcal{D}$. Thus, the top left 
corner in Diagram~(\ref{diaglemmarepnattransfbasics}) is a fiber product of representable presheaves and hence is representable itself 
whenever $\mathcal{D}$ has pullbacks.
\end{proof}

This subsumes the basic notions of indexed and fibered $\infty$-category theory necessary for the subsequent sections to 
follow. Further constructions, e.g.\ concerning limits and colimits in this context can be found in
\cite[Section 4.3.1]{luriehtt} and in more detail in \cite[Section 6.1.1]{luriehtt} in the case $\mathcal{C}$ is presentable. Other 
constructions have been studied in \cite{barwicketalparahct}.	

We end this section with a short recap of the category $\mathbf{S}^+$ of marked simplicial sets and the cartesian model 
structure as defined in \cite[Section 3.1]{luriehtt}.
A marked simplicial set is a pair $X=(I,A)$ where $I$ is a simplicial set and $A\subseteq I_1$ is a subset which 
contains all degenerate edges. We say that $A$ is the set of marked edges in $X$. A map $f\colon (I,A)\rightarrow (J,B)$ 
of marked simplicial sets is a functor $f\colon I\rightarrow J$ of simplicial sets such that $f[A]\subseteq B$.
For every simplicial set $I$ we obtain two cases of special interest. One, the minimally marked simplicial set 
$I^{\flat}:=(I,s_0[I_0])$ and, two, the maximally marked simplicial set $I^{\sharp}:=(I,I)$.
The category $\mathbf{S}^{+}$ of marked simplicial sets is cartesian closed, the underlying simplicial set of its 
internal hom-objects $Y^X$ is denoted by $\mathrm{Map}^{\flat}(X,Y)$. The simplicial subset
$\mathrm{Map}^{\sharp}(X,Y)\subseteq\mathrm{Map}^{\flat}(X,Y)$ consists of the simplices whose 1-boundaries are 
all marked in $Y^X$. These two simplicial sets are characterized by the formulas
\begin{align}
\label{flatenrichment} \mathbf{S}(I,\mathrm{Map}^{\flat}(X,Y)) & \cong \mathbf{S}^{+}(I^{\flat}\times X,Y), \\
\label{sharpenrichment} \mathbf{S}(I,\mathrm{Map}^{\sharp}(X,Y)) & \cong \mathbf{S}^{+}(I^{\sharp}\times X,Y).
\end{align}
Whenever $p\colon\mathcal{E}\twoheadrightarrow\mathcal{C}$ is a cartesian fibration,
the object $\mathcal{E}^{\natural}\in\mathbf{S}^+$ denotes the marked simplicial set
$(\mathcal{E},\{p\text{-cartesian edges}\})$. Thus, $\mathcal{E}^{\times}$ is the largest subsimplicial set $S\subseteq\mathcal{E}$ 
such that $p\mid_S\colon S\twoheadrightarrow\mathcal{C}$ is a subfibration of $p$ and $S^{\natural}=S^{\sharp}$. It follows that a 
cartesian fibration $p\colon\mathcal{E}\twoheadrightarrow \mathcal{C}$ is a right fibration if and only if
$\mathcal{E}^{\natural}=\mathcal{E}^{\sharp}$.

As a special case of \cite[Section 2.1.4, Section 3.1.3]{luriehtt}, given an $\infty$-category $\mathcal{C}$, Lurie constructs two model 
structures on the slice category $\mathbf{S}^{+}_{/\mathcal{C}^{\sharp}}$. These are the contravariant model structure
$\mathrm{RFib}(\mathcal{C})$, whose fibrant objects are exactly maps of the form
$p\colon\mathcal{E}^{\sharp}\twoheadrightarrow\mathcal{C}^{\sharp}$ such that $p$ is a right fibration of underlying simplicial sets, and the 
cartesian model structure $\mathrm{Cart}(\mathcal{C})$, whose fibrant objects are exactly of the form
$p\colon\mathcal{E}^{\natural}\twoheadrightarrow\mathcal{C}^{\sharp}$ such that $p$ is a cartesian fibration on underlying simplicial sets.
The induced simplicial enrichment of the slice-category $\mathbf{S}^+_{/\mathcal{C}^{\sharp}}$ via the ``sharp'' hom-object construction 
(\ref{sharpenrichment}) endows both model categories with a simplicial enrichment over the model category $(\mathbf{S},\mathrm{Kan})$ for Kan 
complexes. The induced simplicial enrichment of the slice-category $\mathbf{S}^+_{/\mathcal{C}^{\sharp}}$ via the ``flat'' hom-object
construction (\ref{flatenrichment}) endows both model categories with a simplicial enrichment over the model category
$(\mathbf{S},\mathrm{QCat})$ for quasi-categories.

\section{Diagrammatic $(\infty,1)$-comprehension schemes}\label{seccomp}

In this section we generalize the notion of comprehension for fibered categories defined in \cite[Section B1.3]{elephant} to notions of 
comprehension for indexed $\infty$-categories, and study the most fundamental examples. We will do so first in terms of indexed $\infty$-categories over the base $\mathcal{C}$ rather than in terms of fibered
$\infty$-categories over $\mathcal{C}$, because some of the structural arguments are more straightforward in the indexed 
language. Additionally, this allows us to define comprehension schemes with 
respect to a further parameter in form of an abstract $\infty$-category $K$ which determines the sort of diagrammatic shapes we want to 
consider comprehension for. Non-standard choices of such $\infty$-categories $K$ will capture instances of ``generalized'' comprehension 
schemes such as well-poweredness or structured comprehension schemes which for example can capture the interpretation 
of (essentially) algebraic theories.
As concrete examples of indexed $\infty$-categories often have to be constructed in terms of their fibrational counterparts however, we will 
give an equivalent characterization of comprehension schemes for cartesian fibrations in Section~\ref{secsubcartfib} and relate it to 
Johnstone's original notion in Remark~\ref{remGcomp}.

\begin{notation}
We fix an $\infty$-category $\mathcal{C}$ for the entire section. It can be assumed to be large (and locally small) unless explicitly stated 
to be small otherwise. We recall that the latter means that $\mathcal{C}$ is an object of the large $\infty$-category
$\mathrm{Cat}_{\infty}$. 
\end{notation}

\begin{remark}
As long as a given (large) $\infty$-category $\mathcal{C}$ can be assumed to be an object of some $\infty$-category $\mathrm{CAT}_{\infty}$ 
of (large) $\infty$-categories, all results apply to such (large) $\infty$-categories $\mathcal{C}$ together with $\mathrm{Cat}_{\infty}$ 
replaced by $\mathrm{CAT}_{\infty}$ accordingly as well. This is just the common kind of universe polymorphism also referred to in 
Notation~\ref{notationglobal} and usually left implicit.
\end{remark}

\begin{definition}\label{defrectind}
Let $K$ be an $\infty$-category, and let $\mathcal{E}\colon\mathcal{C}^{op}\rightarrow K$ be a functor. For an object $k\in K$, let
$K(k,\mathcal{E})$ be the presheaf defined by the composition
\begin{align}\label{equdefrectind}
\mathcal{C}^{op}\xrightarrow{\mathcal{E}} K\xrightarrow{K(k,\phv)}\mathcal{S}.
\end{align}
\end{definition}

Given a morphism $G\colon k\rightarrow l$ in $K$, the restriction
$G^{\ast}\colon K(l,\phv)\rightarrow K(k,\phv)$ in $\mathrm{Fun}(K,\mathcal{S})$ induces for every functor
$\mathcal{E}\colon\mathcal{C}^{op}\rightarrow K$ a natural transformation 
$G^{\ast}\colon K(l,\mathcal{E})\rightarrow K(k,\mathcal{E})$ in $\hat{\mathcal{C}}$ by precomposition with $\mathcal{E}$. 

\begin{definition}\label{defGcomp}
Let $K$ be an $\infty$-category and $G\colon k\rightarrow l$ be a morphism in $K$. Say a functor
$\mathcal{E}\colon\mathcal{C}^{op}\rightarrow K$ has \emph{$(K,G)$-comprehension} if the 
natural transformation
\[G^{\ast}\colon K(l,\mathcal{E})\rightarrow K(k,\mathcal{E})\]
in $\hat{\mathcal{C}}$ is representable.
\end{definition}

\begin{lemma}[Homotopy invariance I]
Let $K$ be an $\infty$-category and $\mathcal{E}\colon\mathcal{C}^{op}\rightarrow K$ be a functor.
Suppose $G\colon k\rightarrow l$ and $G\sprime\colon k\sprime\rightarrow l\sprime$ are
equivalent morphisms in $K$. Then $\mathcal{E}$ has $(K,G)$-comprehension if and only if it has $(K,G\sprime)$-comprehension.
\end{lemma}

\begin{proof}
Every set $\Sigma$ of 2-cells in $K$ which witnesses that $G$ and $G\sprime$ are equivalent morphisms in $K$ induces a set $y(\Sigma)$ of
2-cells in $\mathrm{Fun}(K,\mathcal{S})$ which witnesses that $G^{\ast}=yG$ and $(G\sprime)^{\ast}=yG\sprime$ are equivalent. Precomposition 
with $\mathcal{E}$ is functorial and so we obtain a set $\mathcal{E}^{\ast}y(\Sigma)$ of 2-cells in $\hat{\mathcal{C}}$ which witnesses that 
the natural transformations $G^{\ast}$ and $(G\sprime)^{\ast}$ are equivalent. Hence, one is representable if and only if the 
other is.
\end{proof}

Comprehension schemes defined in this generality encompass a wide variety of examples. We will give three such general examples to illustrate 
the ubiquity, but will then restrict our attention to comprehension schemes with respect to a (not necessarily full) $\infty$-subcategory 
$K\subseteq\mathrm{Cat}_{\infty}$. The standard case is formed by $K=\mathrm{Cat}_{\infty}$ itself, which will be studied in most detail.

\begin{example}\label{exmplequivcomp}
Let $K$ be an $\infty$-category and $\mathcal{E}\colon\mathcal{C}^{op}\rightarrow K$ be a functor. Then $\mathcal{E}$ has
$(K,G)$-comprehension for every equivalence $G\colon k\rightarrow l$ in $K$. Indeed, for every such $G$ the restriction
\begin{align}\label{equexmplequivcomp}
G^{\ast}\colon K(l,\mathcal{E})\rightarrow K(k,\mathcal{E})
\end{align}
is again an equivalence in $\hat{\mathcal{C}}$, and thus is representable by Lemma~\ref{lemmarepnattransfbasics}.1.
\end{example}

\begin{example}
Let $\mathcal{C}$ be an $\infty$-category with finite products and let $C\in\mathcal{C}$ be an object. Via Example~\ref{explerepslenattransfoverpt} the functor
\[C\times(\cdot)\colon\mathcal{C}^{op}\rightarrow\mathcal{C}^{op}\]
has $(\mathcal{C}^{op},\emptyset\rightarrow D)$-comprehension for some object $D\in\mathcal{C}$ if and only if the presheaf
$\mathcal{C}(C\times(\cdot),D)\in\hat{\mathcal{C}}$ is representable. That is, if and only if the exponential $D^C$ exists in $\mathcal{C}$.
One computes that this in turn implies (and hence is equivalent to) $(\mathcal{C}^{op},\iota_2\colon E\rightarrow D\sqcup E) $-comprehension for all 
objects $E\in\mathcal{C}$.
\end{example}

\begin{example}\label{explegencasela}
More generally, a functor $\mathcal{E}\colon\mathcal{C}^{op}\rightarrow K$ into an $\infty$-category $K$ with an initial object has
$(K,\emptyset\rightarrow k)$-comprehension for all objects $k$ (contained in a full $\infty$-subcategory $K\sprime\subseteq K$) if and only 
if the functor $\mathcal{E}$ has a (partial) left adjoint (with respect to the inclusion $K\sprime\subseteq K$). Accordingly, the
``syntax-to-semantics'' functors $\mathrm{pt}\colon\mathrm{Thy}^{op}\rightarrow\mathrm{Mod}$ arising in the common Stone duality type 
theorems all have $(\mathrm{Mod},\emptyset\rightarrow M)$-comprehension for all objects $M\in\mathrm{Mod}$.
\end{example}

Let $\mathrm{Cat}_{\infty}^{\mathrm{pb}}\subseteq\mathrm{Cat}_{\infty}$ denote the $\infty$-category of small
$\infty$-categories with pullbacks and pullback-preserving functors. Let
$\mathrm{Cat}_{\infty}^{\mathrm{lex}}\subseteq\mathrm{Cat}_{\infty}$ denote the $\infty$-category of small left exact 
$\infty$-categories and left exact functors. Note that for example the $\infty$-categories $\Delta^0$ and $\Delta^1$ 
are both left exact, and that the endpoint-inclusion $d^0\colon\Delta^0\rightarrow\Delta^1$ is left exact as well. A 
pullback-preserving functor $\Delta^0\rightarrow\mathcal{D}$ into an $\infty$-category $\mathcal{D}$ with pullbacks 
is determined by an object of $\mathcal{D}$. The space of left exact functors $\Delta^0\rightarrow\mathcal{D}$ into a 
(left exact) $\infty$-category $\mathcal{D}$ with terminal object $\ast$ is contractible -- with point of contraction 
the morphism $\ast\mapsto\ast$. A pullback-preserving functor $\Delta^1\rightarrow\mathcal{D}$ into an
$\infty$-category $\mathcal{D}$ (with pullbacks) is determined by a monomorphism in $\mathcal{D}$. A left exact 
functor $\Delta^1\rightarrow\mathcal{D}$ into a (left exact) $\infty$-category $\mathcal{D}$ with terminal object is 
determined by a subterminal object in $\mathcal{D}$.

\begin{example}[Well-poweredness]
Let $\mathcal{E}\colon\mathcal{C}^{op}\rightarrow\mathrm{Cat}_{\infty}^{\mathrm{pb}}$ be an indexed $\infty$-category with pullbacks. 
Say that $\mathcal{E}$ is \emph{well-powered} if it has
$(\mathrm{Cat}_{\infty}^{\mathrm{pb}},d^0\colon\Delta^0\rightarrow\Delta^1)$-comprehension.
Thus, $\mathcal{E}$ is well-powered if for every pair of objects $C\in\mathcal{C}$, $E\in\mathcal{E}(C)$ there is a morphism
$\varepsilon_E\colon P_C(E)\rightarrow C$ in $\mathcal{C}$ together with a monomorphism
$\top_E\colon\mathrm{el}_E\hookrightarrow \varepsilon_E^{\ast}(E)$ in $\mathcal{E}(P_C(E))$ which for all $f\colon D\rightarrow C$ classifies the subobjects of $f^{\ast}E\in\mathcal{E}(D)$ in the sense that the natural transformation
\[((\varepsilon_E)_{\ast},\ulcorner \top_E\urcorner)\colon y(P_C(E))\rightarrow yC\times_{\mathcal{E}^{\simeq}}\mathrm{Cat}_{\infty}^{\mathrm{pb}}(\Delta^1,\mathcal{E})\]
is an equivalence (following the blueprint in Remark~\ref{remclassobj}).
The given comprehension scheme corresponds exactly to the ``generalized comprehension scheme'' for pointwise monomorphic functors
from \cite[Example B1.3.14]{elephant} in the ordinary categorical case.
In particular, if $\mathcal{C}$ is small and has pullbacks itself, the canonical indexing
$\mathcal{C}_{/(\cdot)}\colon\mathcal{C}^{op}\rightarrow\mathrm{Cat}_{\infty}^{\mathrm{pb}}$ is well-powered if and only if all slices
$\mathcal{C}_{/C}$ have (pullback-stable) power objects. 
\end{example}

\begin{remark}
One can define well-poweredness more generally for arbitrary indexed $\infty$-categories
$\mathcal{E}\colon\mathcal{C}^{op}\rightarrow\mathrm{Cat}_{\infty}$ as $((\mathrm{Cat}_{\infty},\mathcal{M}),d_0)$-comprehension, where 
$(\mathrm{Cat}_{\infty},\mathcal{M})\subset\mathrm{Cat}_{\infty}$ denotes the wide $\infty$-subcategory spanned by the
monomorphism-preserving functors.
\end{remark}

\begin{example}[Subterminal object classifiers]\label{explesubtermobclass}
Let $\mathcal{E}\colon\mathcal{C}^{op}\rightarrow\mathrm{Cat}_{\infty}^{\mathrm{lex}}$ be an indexed left exact $\infty$-category. 
Let us consider $(\mathrm{Cat}_{\infty}^{\mathrm{lex}},d^0\colon\Delta^0\rightarrow\Delta^1)$-comprehension.
It holds if for every object $C\in\mathcal{C}$ there is a morphism $\varepsilon\colon \Omega_C\rightarrow C$ in $\mathcal{C}$ together with 
a subterminal object $\top\colon\mathrm{el}\hookrightarrow \ast$ in $\mathcal{E}(\Omega_C)$ which classifies the subterminal objects in the 
fibers of $\mathcal{E}$. In that case let's say that $\mathcal{E}$ has a subterminal object classifier.
In particular, if $\mathcal{C}$ is small and has pullbacks itself, the canonical indexing
$\mathcal{C}_{/(\cdot)}\colon\mathcal{C}^{op}\rightarrow\mathrm{Cat}_{\infty}^{\mathrm{lex}}$ has a subterminal object classifier if and only 
if all slices $\mathcal{C}_{/C}$ have (pullback-stable) subobject classifiers. 

Even in this generality, for every indexed left exact $\infty$-category
$\mathcal{E}\colon\mathcal{C}^{op}\rightarrow\mathrm{Cat}_{\infty}^{\mathrm{lex}}$, one can show that well-poweredness of the pushforward
$\mathcal{E}\colon\mathcal{C}^{op}\rightarrow\mathrm{Cat}_{\infty}^{\mathrm{pb}}$ implies the existence of subterminal object classifiers in 
$\mathcal{E}$. Indeed, under the given assumptions there is a cartesian square
\begin{align}\label{diagexplesubtermclass}
\begin{gathered}
\xymatrix{
\mathrm{Cat}_{\infty}^{\mathrm{lex}}(\Delta^1,\mathcal{E})\ar[r]\ar[d] & \mathrm{Cat}_{\infty}^{\mathrm{pb}}(\Delta^1,\mathcal{E})\ar[d]^{d_0^{\ast}} \\
\Delta^0\ar[r]_{\ulcorner\ast_{\mathcal{E}}\urcorner} & \mathcal{E}^{\simeq}
}
\end{gathered}
\end{align}
in $\hat{\mathcal{C}}$. Here, the transformation on the top is given by the natural forgetful functor. We identify the constant terminal 
presheaf $\Delta^0$ in the lower left corner with $\mathrm{Cat}_{\infty}^{\mathrm{lex}}(\Delta^0,\mathcal{E})$, and the lower right corner 
with $\mathrm{Cat}_{\infty}^{\mathrm{pb}}(\Delta^0,\mathcal{E})$. The terminal objects in the fibers $\mathcal{E}(C)$ assemble to a natural 
transformation $\ulcorner\ast_{\mathcal{E}}\urcorner\colon\Delta^0\rightarrow\mathcal{E}^{\simeq}$ precisely because the transition functors 
of $\mathcal{E}$ are assumed to preserve terminal objects (see e.g.\ \cite[Proposition 12.2.6]{riehlverityelements}). The fact that the 
square (\ref{diagexplesubtermclass}) is cartesian follows from the characterization of the upper right corner as the presheaf of 
fiberwise monomorphisms, and the upper left corner as the presheaf of fiberwise subterminal objects. Thus, representability of the 
right vertical transformation in (\ref{diagexplesubtermclass}) implies representability of the left vertical transformation by 
Lemma~\ref{lemmarepnattransfbasics}.1.
\end{example}

\begin{remark}\label{remtoposcomp}
One might expect that the notion of elementary $\infty$-topos, however defined, should be recoverable from suitable comprehension schemes. 
Such a presentation would be useful to define and study elementary $\infty$-toposes over arbitrary base $\infty$-toposes. For example, in
1-category theory, a left exact 1-category $\mathcal{C}$ is an elementary 1-topos if and only if the (large) canonical indexing
$\mathcal{C}_{/(\cdot)}$ is well-powered, see \cite[Theorem 11.1]{streicherfibcats}. But well-poweredness of the canonical indexing
$\mathcal{C}_{/(\cdot)}\colon\mathcal{C}^{op}\rightarrow\mathrm{CAT}_{\infty}^{\mathrm{lex}}$ for left exact $\infty$-categories does not 
characterize $\infty$-toposes, as such require (among others) the existence of object classifiers rather than of subobject classifiers only.
``Super-poweredness'' in form of unrestricted $(\mathrm{Cat}_{\infty},d^0\colon\Delta^0\rightarrow\Delta^1)$-comprehension is however too 
strong, since it would require each slice of $\mathcal{C}$ to have an object classifier with no size restriction whatsoever.

Whenever $\mathcal{C}$ is presentable however, for any regular cardinal $\kappa$ one may consider the comprehension scheme of
super-poweredness restricted to pointwise relatively $\kappa$-compact diagrams. To make this precise we may consider the full
$\infty$-subcategory $\mathrm{Pres}\subset\mathrm{CAT}_{\infty}$ spanned by the presentable $\infty$-categories, and
the wide $\infty$-subcategory $\mathrm{Pres}^{\rightarrow_{\kappa}}\subset\mathrm{Pres}$ spanned by those functors which preserve relatively
$\kappa$-compact morphisms. For instance, for every regular cardinal $\kappa$, the endpoint-inclusion $d_0\colon\Delta^0\rightarrow\Delta^1$ 
is a functor in $\mathrm{Pres}^{\rightarrow_{\kappa}}$, and the canonical indexing
$\mathcal{C}_{/(\cdot)}\colon\mathcal{C}^{op}\rightarrow\mathrm{CAT}_{\infty}$ factors through $\mathrm{Pres}^{\rightarrow_{\kappa}}$ as 
well. Then one can show that the canonical indexing $\mathcal{C}_{/(\cdot)}$ has
$(\mathrm{Pres}^{\rightarrow_{\kappa}},d_0)$-comprehension if and only if all slices of $\mathcal{C}$ have pullback-stable object classifiers 
for relatively $\kappa$-compact morphisms (via \cite[Theorem 6.1.6.8]{luriehtt} and \cite[Proposition 6.3.5.1.(1)]{luriehtt}). One may say in 
this case that each $\mathcal{C}_{/(\cdot)}\colon\mathcal{C}^{op}\rightarrow\mathrm{Pres}^{\rightarrow_{\kappa}}$ is ``$\kappa$-super 
powered''. We will give a more elegant characterization of $\infty$-toposes and elementary $\infty$-toposes in the sense of
\cite[Section 3]{rasekheltops} in terms of comprehension schemes in the end of Section~\ref{secextuniv}. 
\end{remark}

\begin{example}[Generic models of theories]\label{explegenmodsofthies}
Let $T$ be a Lawvere theory -- such as, say, the theory of monoids, the theory of (abelian) groups or the theory of (commutative) rings -- 
and let $\mathrm{Cat}_{\infty}^{\Pi}\subset\mathrm{Cat}_{\infty}$ be the $\infty$-category of small $\infty$-categories with finite products 
and finite product preserving functors. Suppose $\mathcal{C}$ has finite products itself. An indexed $\infty$-category
$\mathcal{E}\colon\mathcal{C}^{op}\rightarrow\mathrm{Cat}_{\infty}^{\Pi}$ has
$(\mathrm{Cat}_{\infty}^{\Pi}, \ast\rightarrow T)$-comprehension if the natural transformation
$\mathrm{Fun}^{\Pi}(T,\mathcal{E})^{\simeq}\rightarrow\ast$ is representable. By Example~\ref{explerepslenattransfoverpt} this holds if and 
only if the presheaf $\mathrm{Fun}^{\Pi}(T,\mathcal{E})^{\simeq}$ of $T$-models in $\mathcal{E}$ is representable. Informally, that means 
there is an object $M\in\mathcal{C}$ together with a $T$-model $m_{\mathrm{gen}}\colon T\rightarrow\mathcal{E}(M)$ which is a generic such model in the sense that for every $T$-model of the form $m\colon T\rightarrow\mathcal{E}(C)$ for some $C\in\mathcal{C}$ there is an essentially unique morphism 
$\ulcorner m\urcorner\colon C\rightarrow M$ in $\mathcal{C}$ such that $\mathcal{E}(\ulcorner m\urcorner)\circ m_{\mathrm{gen}}\simeq m$.
In the case $\mathcal{C}$ is small and has all finite limits, and $\mathcal{E}=\mathcal{C}_{/(\cdot)}$ is the canonical indexing for example, 
this means that there is a universal ``$T$-bundle'' in $\mathcal{C}$. 
\end{example}

Example~\ref{explegenmodsofthies} also applies to all sorts of other theories such as, say, finite limit theories together with indexed
$\infty$-categories $\mathcal{E}\colon\mathcal{C}^{op}\rightarrow\mathrm{Cat}_{\infty}^{\mathrm{lex}}$, or geometric theories as follows.

\begin{example}[Classifying toposes]
Let $\mathrm{RTop}$ be the very large $\infty$-category of $\infty$-toposes and geometric morphisms. 
In \cite[Section 7]{rs_hgst} the author defines an $\infty$-category $\mathrm{GeoCAT}_{\infty}^{\kappa}\subset\mathrm{CAT}_{\infty}$ of large
higher $\kappa$-geometric $\infty$-categories and higher $\kappa$-geometric functors for any regular cardinal $\kappa$. Its objects are the 
(large) $\infty$-categories with finite limits and universal and effective $\kappa$-small colimits. Its morphisms are the finite limit 
preserving and $\kappa$-small colimit preserving functors. The forgetful functor
\[U\colon\mathrm{RTop}^{op}\rightarrow\mathrm{GeoCAT}_{\infty}^{\kappa}\]
has $(\mathrm{GeoCAT}_{\infty},\ast\rightarrow\mathcal{C})$-comprehension for every small higher $\kappa$-geometric
$\infty$-category $\mathcal{C}$ precisely because every such $\mathcal{C}$ has 
a classifying $\infty$-topos $\mathrm{Sh}_{\mathrm{geo}_{\kappa}}(\mathcal{C})$ such that the Yoneda embedding of
$\mathcal{C}$ factors through a higher $\kappa$-geometric functor
$y\colon\mathcal{C}\rightarrow\mathrm{Sh}_{\mathrm{Geo}_{\kappa}}(\mathcal{C})$. The latter is the generic
$\mathcal{C}$-model associated to $U$.

In the ordinary categorical case, the same holds for the $(2,1)$-category $\mathrm{GeoCAT}^{\kappa}_1$ of large
$\kappa$-geometric categories, the $(2,1)$-category $\mathrm{RTop}_1$ of ordinary Grothendieck toposes, and the 
according forgetful $2$-functor
$U\colon\mathrm{RTop}_1^{op}\rightarrow\mathrm{GeoCAT}^{\kappa}_1$. The fact that $U$ has
$(\mathrm{GeoCAT}^{\kappa}_1,\ast\rightarrow \mathcal{C})$-comprehension for all small $\kappa$-geometric categories 
$\mathcal{C}$ corresponds exactly to the fact that for any given small $\kappa$-geometric theory $T$ the composite 
pseudo-functor
\begin{align}\label{equexplegenmodsofthies}
\mathrm{RTop}_1^{op}\xrightarrow{T\text{-Mod}(\cdot)}\mathrm{CAT}\xrightarrow{(\cdot)^{\simeq}}\mathrm{GRPD}
\end{align}
of $T$-models is representable \cite[Remark 2.1.5]{caramellobook}. This means that the indexed $\infty$-category
$T\text{-Mod}(\cdot)\colon\mathrm{RTop}_1^{op}\rightarrow\mathrm{CAT}$ is globally small in the sense of 
Example~\ref{exmplsmallness}. The sequence (\ref{equexplegenmodsofthies}) is part of a very classic construction in 
case of the geometric theory $T_{BG}$ of $G$-torsors for a discrete group $G$. Here, precomposition of 
(\ref{equexplegenmodsofthies}) with the functor $\mathrm{Sh}\colon\mathrm{Top}\rightarrow\mathrm{RTop}_1$ which 
assigns to a topological space its associated localic topos of sheaves 
over it is naturally equivalent to the functor $PB_G\colon\mathrm{Top}^{op}\rightarrow\mathrm{GRPD}$ which assigns to 
a space the groupoid of principle $G$-bundles over it, see e.g.\ \cite[Section VIII.2]{mlmsheaves}.
\footnote{Presumably the picture can be completed $\infty$-categorically with respect to a notion of principle
$\infty$-bundle \cite{nssbundles}.
This would mean that there is a geometric $\infty$-category $T_{BG_{\infty}}$ for every $\infty$-group $G$
such that $EG\in\mathcal{S}_{/BG}$ is the universal $T_{BG_{\infty}}$-bundle in the sense of 
Example~\ref{explegenmodsofthies}. Or in other words, that the canonical indexing
$\mathcal{S}_{/(\cdot)}\colon\mathcal{S}^{op}\rightarrow\mathrm{CAT}_{\infty}$ has
$(\mathrm{GeoCAT}^{|G|},\ast\rightarrow T_{BG_{\infty}})$-comprehension.}
\end{example}

\begin{example}[Generic models of theory extensions]
To elaborate further on Example~\ref{explegenmodsofthies}, we also may consider a functor
$F\colon T\rightarrow T\sprime$ of Lawvere theories \cite[Section 3.7]{borceuxhandbook2}. Informally, an indexed
$\infty$-category $\mathcal{E}\colon\mathcal{C}^{op}\rightarrow\mathrm{Cat}_{\infty}^{\Pi}$ has
$(\mathrm{Cat}_{\infty}^{\Pi},F)$-comprehension if for every $C\in\mathcal{C}$ and every $T$-model
$M\colon T\rightarrow\mathcal{E}(C)$ there is an object $\varepsilon_{(F,M)}\in\mathcal{C}_{/C}$ together 
with a $T\sprime$-model $M\sprime\colon T\sprime\rightarrow\mathcal{E}(\mathrm{dom}(\varepsilon_{(F,M)}))$ which is the generic $F$-extension 
of $M$ in the sense that it is the universal such model such that $\mathcal{E}(\varepsilon_{(F,M)})\circ M\simeq M\sprime\circ F$. In the 
case of the canonical indexing $\mathcal{E}=\mathcal{C}_{/(\cdot)}$ over a small $\infty$-category $\mathcal{C}$ with pullbacks, and, say, 
for the canonical functor $F\colon\mathrm{Mnd}\rightarrow\mathrm{Grp}$ from the theory of 
monoids to the theory of groups, this means that for every monoid object $M\in\mathcal{C}_{/C}$ for some $C\in\mathcal{C}$ there is a 
universal pair consisting of an object $\varepsilon_M\in\mathcal{C}_{/C}$ and a group object
$G_M\in\mathcal{C}_{/\mathrm{dom}(\varepsilon_M)}$ such that the underlying monoid of $G$ is equivalent to the restriction of $M$ along
$\varepsilon_M$. In that sense, the object $\mathrm{dom}(\varepsilon_M)\in\mathcal{C}$ classifies pullbacks of the monoid $M$ which happen to 
be groups already. For example, the scheme holds whenever $\mathcal{C}$ is an $\infty$-topos, as for a given monoid $M$ in $\mathcal{C}_{/C}$  
it is witnessed by the colimit $\varepsilon_M\in\mathcal{C}_{/C}$ of all elements
$g\colon G\rightarrow C$ for a generator $G$ such that the monoid $g^{\ast}M\in\mathcal{C}_{/G}$ is a group.
\end{example}

\begin{notation}\label{notationcompstd}
Whenever $K\subseteq\mathrm{Cat}_{\infty}$ is a full $\infty$-subcategory and $G\colon k\rightarrow l$ is a morphism in $K$, a functor
$\mathcal{E}\colon\mathcal{C}^{op}\rightarrow K$ has $(K,G)$-comprehension if and only if its postcomposition
$\mathcal{E}\colon\mathcal{C}^{op}\rightarrow\mathrm{Cat}_{\infty}$ has $(\mathrm{Cat}_{\infty},G)$-comprehension. In this case we will 
suppress the parameter $K$, and simply say that an indexed $\infty$-category
$\mathcal{E}\colon\mathcal{C}^{op}\rightarrow K\subseteq\mathrm{Cat}_{\infty}$ has $G$-comprehension whenever it has
$(\mathrm{Cat}_{\infty},G)$-comprehension. This is exactly standard diagrammatic $G$-comprehension in the sense of 
Definition~\ref{defdiagcompsheme}. 
\end{notation}

In the following we focus on the standard case of Notation~\ref{notationcompstd}. 
Recall that the $\infty$-category $\mathrm{Cat}_{\infty}$ can be described as the homotopy-coherent nerve of the full simplicial category
$\mathbf{QCat}\subset\mathbf{S}$ spanned by the quasi-categories. Every simplicial set $I$ induces a simplicial functor
\[(\cdot)^I\colon\mathbf{QCat}\rightarrow\mathbf{QCat}\]
by exponentiation. Whenever $I$ itself is a quasi-category and $\mathcal{E}$ is a $\mathcal{C}$-indexed $\infty$-category, the presheaf
$\mathrm{Cat}_{\infty}(I,\mathcal{E})\in\hat{\mathcal{C}}$ is by construction the core
$(\mathcal{E}^{I})^{\simeq}$ of the pointwise induced exponential 
\[\mathcal{E}^I\colon\mathcal{C}^{op}\xrightarrow{\mathcal{E}}\mathrm{Cat}_{\infty}\xrightarrow{(\cdot)^I}\mathrm{Cat}_{\infty}.\]
In this case, we therefore can define comprehension more generally for all maps of simplicial sets as follows. 

\begin{notation}\label{notationGcompstrict}
Let $\mathcal{E}\colon\mathcal{C}^{op}\rightarrow\mathrm{Cat}_{\infty}$ be an indexed $\infty$-category, and let $G\colon I\rightarrow J$ be 
a map of simplicial sets. Say that $\mathcal{E}$ has $G$-comprehension if the 
natural transformation
\begin{align}\label{equnotationGcompstrict}
G^{\ast}\colon (\mathcal{E}^J)^{\simeq}\rightarrow(\mathcal{E}^I)^{\simeq}
\end{align}
in $\hat{\mathcal{C}}$ is representable.
\end{notation}

\begin{lemma}[Homotopy Invariance II]\label{lemmaGcompstrict}
Let $\mathcal{E}\colon\mathcal{C}^{op}\rightarrow\mathrm{Cat}_{\infty}$ be an indexed $\infty$-category.
If $G\colon I\rightarrow J$ and $G\sprime\colon I\sprime\rightarrow J\sprime$ in $\mathbf{S}$ are weakly equivalent maps in the Joyal 
model structure, then an indexed $\infty$-category $\mathcal{E}\colon\mathcal{C}^{op}\rightarrow\mathrm{Cat}_{\infty}$ has $G$-comprehension 
if and only if it has $G\sprime$-comprehension.
In particular, if $\mathbb{R}G\colon\mathbb{R}I\rightarrow\mathbb{R}J$ denotes a fibrant replacement of $G\colon I\rightarrow J$ in the Joyal 
model structure, then $\mathcal{E}$ has $G$-comprehension in the sense of Notation~\ref{notationGcompstrict} if and 
only if it has $\mathbb{R}G$-comprehension in the sense of Definition~\ref{defGcomp}.
\end{lemma}
\begin{proof}
Suppose we are given a commutative square
\[\xymatrix{
I\ar[r]^G\ar[d] & J\ar[d] \\
I\sprime\ar[r]_{G\sprime} & J\sprime
}\]
in $\mathbf{S}$ such that the two vertical maps are weak equivalences in the Joyal model structure. This induces a square
\[\xymatrix{
(\mathcal{E}^{J\sprime})^{\simeq}\ar[r]\ar[d]_{(G\sprime)^{\ast}}& (\mathcal{E}^{J})^{\simeq}\ar[d]^{G^{\ast}}  \\
(\mathcal{E}^{I\sprime})^{\simeq}\ar[r] & (\mathcal{E}^{I})^{\simeq}
}\]
in $\hat{\mathcal{C}}$ where the horizontal natural transformations are equivalences (given that the Joyal model structure is cartesian 
closed). It follows that $G^{\ast}$ is representable if and only if $(G\sprime)^{\ast}$ is representable, and so the two corresponding 
instances of comprehension are equivalent.
\end{proof}

Therefore, in the following we will not further distinguish whether we consider a given functor $G\colon I\rightarrow J$ of
$\infty$-categories as a morphism in $\mathrm{Cat}_{\infty}$ or as a map of underlying simplicial sets.

\begin{example}[Global smallness]\label{exmplsmallness}
Let $\mathcal{E}\colon\mathcal{C}^{op}\rightarrow\mathrm{Cat}_{\infty}$ be an indexed $\infty$-category. 
Say $p$ is \emph{globally small} if it has $(\emptyset\rightarrow\Delta^0)$-comprehension, i.e.\ if the natural transformation
$\mathcal{E}^{\simeq}\rightarrow\ast$ in $\hat{\mathcal{C}}$ is representable. Whenever $\mathcal{C}$ has finite products, that is 
if and only if the core $\mathcal{E}^{\simeq}\in\hat{\mathcal{C}}$ is representable by Example~\ref{explerepslenattransfoverpt}.
\end{example}

The choice of terminology in Example~\ref{exmplsmallness} follows \cite[Section 5.2]{jacobsttbook} (and hence ultimately \cite{pavlovic_thesis}) where the term is used to denote the existence of (split) generic objects in a (split) fibration of 1-categories.

\begin{remark}\label{reminftysmall}
Let $\mathcal{E}\colon\mathcal{C}\rightarrow\mathrm{Cat}$ be a pseudo-functor over an ordinary category $\mathcal{C}$ with products. By 
definition, $\mathcal{E}$ is globally small if and only if the $\mathcal{C}$-indexed groupoid $\mathcal{E}^{\simeq}$ is representable. As 
$\mathcal{C}$ is a 1-category however, its Yoneda embedding factors through the category $\mathrm{Fun}(\mathcal{C}^{op},\mathrm{Set})$ of 
set-valued presheaves.
Thus, the existence of a natural equivalence $\mathcal{E}^{\simeq}\simeq y(C)$ for some $C\in\mathcal{C}$ implies 
that the groupoids $\mathcal{E}(D)$ for $D\in\mathcal{C}$ are all essentially discrete. This for instance implies 
that the automorphism groups of all objects $E\in\mathcal{E}(D)$ for $D\in\mathcal{C}$ are trivial. 
This latter implication is exactly the observation in \cite[Example B1.3.13]{elephant} as
$(\emptyset\rightarrow\Delta^1)$-comprehension here really is a $(2,1)$-categorical comprehension scheme in the 
sense that it requires representability of the indexed groupoid $\mathcal{E}^{\simeq}$ up to equivalence. This 
observation leads to the introduction of a variety of generic objects
\cite{jacobsttbook, sterlinggenobj} in the ordinary categorical context to replace global smallness, and so to 
capture essential examples with non-trivial automorphism groups. To be more precise, whenever $\mathcal{E}$ is a strict functor, and comprehension is defined in terms 
of the underlying-set functor $\mathrm{Cat}\rightarrow\mathrm{Set}$ as discussed in the introduction, then global smallness corresponds 
exactly to the existence of a split generic object. This notion is not equivalence-invariant however, and to remedy the fact that the
$\mathcal{C}$-indexed underlying set of a general pseudo-functor does not even exist, global smallness for a general pseudo-functor (or a 
general fibration respectively) has to be defined in other terms. Such are given for example by the existence of 
(weak) generic objects \cite{jacobsttbook, sterlinggenobj}. These however go beyond the framework of diagrammatic 
comprehension schemes. 

In that sense, global smallness is a much more natural notion in $\infty$-category theory. We will further elaborate on this in 
Remark~\ref{reminftysmall2} in the context of the externalization functor. 
Indeed, in Section~\ref{secext} we will see examples of globally small indexed $\infty$-categories with highly non-trivial vertical 
automorphism spaces in their fibers.
For instance, such an example is given by the identity $\mathcal{S}\rightarrow\mathcal{S}$ considered as an $\mathcal{S}^{op}$-indexed
$\infty$-category. It is represented by the terminal object $\ast\in\mathcal{S}$ (considered as an object of $\mathcal{S}^{op}$) and hence it 
is globally small. The automorphism space of a point $x\in X$ for a space $X\in\mathcal{S}$ is the (generally non-contractible) loop space
$\Omega_X(x)$.
\end{remark}

\begin{example}\label{exmpleintdiags}
More generally, given an $\infty$-category $\mathcal{C}$ with finite products, for any given simplicial set $I$, a $\mathcal{C}$-indexed
$\infty$-category $\mathcal{E}$ has $(\emptyset\rightarrow I)$-comprehension if and only if the presheaf
$(\mathcal{E}^I)^{\simeq}$ of fiberwise $I$-indexed diagrams in $\mathcal{E}$ is representable. For instance, the identity
$1\colon\mathrm{Cat}_{\infty}\rightarrow\mathrm{Cat}_{\infty}$ considered as a $\mathrm{Cat}_{\infty}^{op}$-indexed $\infty$-category has
($\emptyset\rightarrow I$)-comprehension in trivial fashion.
Thus, just as global smallness of $\mathcal{E}$ expresses representability of the presheaf $\mathcal{E}^{\simeq}$ of objects in
$\mathcal{E}$, for instance $(\emptyset\rightarrow\Delta^0\sqcup\Delta^0)$-comprehension expresses representability of the presheaf
$\mathcal{E}^{\simeq}\times\mathcal{E}^{\simeq}$ of pairs of objects in $\mathcal{E}$.
\end{example}

\begin{example}\label{exmplecompschemesidcat}
In fact, the identity $1\colon(\mathrm{Cat}_{\infty}^{op})^{op}\rightarrow\mathrm{Cat}_{\infty}$ has $G$-comprehension for every functor
$G\colon\mathcal{I}\rightarrow\mathcal{J}$ between small $\infty$-categories $\mathcal{I}$, $\mathcal{J}$. Indeed, for a given span
$\mathrm{Cat}_{\infty}(\mathcal{C},\phv)\rightarrow\mathrm{Cat}_{\infty}(\mathcal{I},\phv)\xleftarrow{G^{\ast}}\mathrm{Cat}_{\infty}(\mathcal{J},\phv)$ in $\widehat{\mathrm{Cat}_{\infty}^{op}}$, the pullback is represented by the pushout of the associated 
cospan $\mathcal{C}\leftarrow\mathcal{I}\xrightarrow{G}\mathcal{J}$ in $\mathrm{Cat}_{\infty}$.
\end{example}

\begin{example}[Local smallness]
Say $\mathcal{E}\colon\mathcal{C}^{op}\rightarrow\mathrm{Cat}_{\infty}$  is \emph{locally small} if it has $\delta^1$-comprehension, 
where $\delta^1\colon\partial\Delta^1\rightarrow\Delta^1$ is the boundary inclusion. That is, if for all $C\in\mathcal{C}$ and all 
$e_1,e_2\colon y(C)\rightarrow\mathcal{E}^{\simeq}$ in $\hat{\mathcal{C}}$, the pullback
\[\xymatrix{
\bullet\ar[r]\ar[d]\ar@{}[dr]|(.3){\pbs} & (\mathcal{E}^{\Delta^1})^{\simeq}\ar[d]^{(\delta^1)^{\ast}} \\
y(C)\ar[r]_(.4){(e_1,e_2)} & \mathcal{E}^{\simeq}\times\mathcal{E}^{\simeq}
}
\]
is representable.
\end{example}

Recall that an $\infty$-category $\mathcal{C}$ is defined to be locally cartesian closed if and only if for every object
$C\in\mathcal{C}$, the slice $\mathcal{C}_{/C}$ is cartesian closed. That means that for every pair
$x,y\in\mathcal{C}_{/C}$, the presheaf
\[\mathcal{C}_{/C}(x\times_{C}(\cdot),y)\colon(\mathcal{C}_{/C})^{op}\rightarrow\mathcal{S}\]
is representable. We have the following characterization of local cartesian closedness, generalizing the corresponding 
result for 1-categories in \cite[Theorem 10.2]{streicherfibcats}.

\begin{proposition}\label{exmplelocsmall}
Suppose $\mathcal{C}$ is a small $\infty$-category and has pullbacks. Then the canonical indexing
$\mathcal{C}_{/(\cdot)}\colon\mathcal{C}^{op}\rightarrow\mathrm{Cat}_{\infty}$ is locally small if and only if $\mathcal{C}$ is locally 
cartesian closed.
\end{proposition}

\begin{proof}
We show that under the equivalence $\hat{\mathcal{C}}_{/y(C)}\simeq\widehat{\mathcal{C}_{/C}}$, the pullback
\begin{align}\label{diagexmplelocsmall1}
\begin{gathered}
\xymatrix{
P(x,z)\ar[r]\ar[d]\ar@{}[dr]|(.3){\pbs} & ((\mathcal{C}_{/(\cdot)})^{\Delta^1})^{\simeq}\ar[d]^{(d_1^{\ast},d_0^{\ast})} \\
y(C)\ar[r]_(.3){(x,z)} & (\mathcal{C}_{/(\cdot)})^{\simeq}\times(\mathcal{C}_{/(\cdot)})^{\simeq}
}
\end{gathered}
\end{align}
is equivalent to the presheaf $\mathcal{C}_{/C}(x\times_{C}(\cdot),z)\colon(\mathcal{C}_{/C})^{op}\rightarrow\mathcal{S}$ for all 
$C\in\mathcal{C}$ and all morphisms $x,z\in\mathcal{C}_{/C}$. Since under this equivalence a morphism over $y(C)$ with representable domain 
corresponds exactly to a representable presheaf in $\widehat{\mathcal{C}_{/C}}$, it follows that representability of one implies 
representability of the other. In particular, the slices $\mathcal{C}_{/C}$ are cartesian closed if and only if the canonical indexing of
$\mathcal{C}$ is locally small. 

To show equivalence of the two presheaves over $\mathcal{C}_{/C}$, it suffices to show that their associated right fibrations over
$\mathcal{C}_{/C}$ are equivalent to one another. Indeed, as the canonical indexing of $\mathcal{C}$ is only indirectly defined as the 
straightening of the target fibration over $\mathcal{C}$, it is much easier to work in the cartesian model structure over $\mathcal{C}$ using 
the observations made in Section~\ref{secpre}. 
In essence, this amounts to showing that the space of morphisms
$f\colon x\times_C w\rightarrow z$ in the slice $\mathcal{C}_{/C}$ is equivalent to the space of squares considered as morphisms
$w^{\ast}x\rightarrow z$ in the $\infty$-category $\mathrm{Fun}(\Delta^1,\mathcal{C})$. Such in turn are essentially given by tuples $(w,v)$ 
in the $\infty$-category $\mathrm{Un}((P(x,z))$ by factorization through the pullback 
$w^{\ast}z$.
\[\xymatrix{
 & &  & X\ar@/_1pc/[ddr]|(.42)\hole^(.3)x & & \\
X\times_C D\ar[urrr]\ar@/_1pc/[ddr]\ar[drr]_v\ar@{-->}[rrrrr]_f & & & & & Z\ar@/^/[dl]^z \\
 & & Z\times_C D\ar[urrr]\ar@/^/[dl] & & C & \\
 & D\ar[urrr]_w & & & & \\
}\]
Hence, the statement follows from the fact that the class of squares in $\mathcal{C}$ which are vertical up to equivalence, together with
the class of cartesian squares in $\mathcal{C}$, yields a factorization system on the domain $\mathrm{Fun}(\Delta^1,\mathcal{C})$.

Now to the implementation of the proof. The unstraightening of the pullback square (\ref{diagexmplelocsmall1}) yields a pullback square of 
the form
\begin{align}\label{diagexmplelocsmall2}
\begin{gathered}
\xymatrix{
\mathrm{Un}_{\mathcal{C}}(P(x,z))\ar[r]\ar@{->>}[d]_{p(x,z)}\ar@{}[dr]|(.3){\pbs} & (\mathrm{Fun}(\Delta^1,\mathcal{C})^{\Delta^1})^{\times}\ar@{->>}[d]^{(d_1,d_0)} \\
\mathcal{C}_{/C}\ar[r]_(.3){(x,z)} & \mathrm{Fun}(\Delta^1,\mathcal{C})^{\times}\times_{\mathcal{C}}\mathrm{Fun}(\Delta^1,\mathcal{C})^{\times}
}
\end{gathered}
\end{align}
in the contravariant model category $\mathrm{RFib}(\mathcal{C})$ over $\mathcal{C}$. Here, the upper right corner is the core of the 
cartesian fibration $t^{\Delta^1}$ of vertical morphisms in $t\colon\mathrm{Fun}(\Delta^1,\mathcal{C})\twoheadrightarrow\mathcal{C}$. It is 
computed as the according simplicial cotensor in the cartesian model category $\mathrm{Cart}(\mathcal{C})$. The fact that the vertical 
fibration on the right hand side of (\ref{diagexmplelocsmall2}) is equivalent to the unstraightening of the according natural transformation  
in (\ref{diagexmplelocsmall1}) is an instance of Proposition~\ref{propstrrect}.

We hence want to show that the fibration $p(x,z)$ is equivalent to the unstraightening
$\mathrm{Un}_{\mathcal{C}_{/C}}(\mathcal{C}_{/C}(x\times_{C}(\cdot),z))\twoheadrightarrow\mathcal{C}_{/C}$.
Using that every right fibration $p\colon\mathcal{E}\twoheadrightarrow\mathcal{C}$ induces trivial fibrations
$p_{/e}\colon\mathcal{E}_{/e}\twoheadrightarrow\mathcal{C}_{/p(e)}$ on overcategories for every $e\in\mathcal{E}$, we obtain a 
sequence of categorical equivalences over $\mathcal{C}$ as follows.
\begin{align}
\notag \mathrm{Un}_{\mathcal{C}}(P(x,z)) & \simeq \left(\mathcal{C}_{/C}\times_{\mathcal{C}_{/C}}\mathcal{C}_{/C}\right)\times_{\left(\mathrm{Fun}(\Delta^1,\mathcal{C}))^{\times}\times_{\mathcal{C}}\mathrm{Fun}(\Delta^1,\mathcal{C}))^{\times}\right)}(\mathrm{Fun}(\Delta^1,\mathcal{C})^{\Delta^1})^{\times} \\
\notag  &\simeq \left(\mathrm{Fun}(\Delta^1,\mathcal{C})^{\times}_{/x}\times_{\mathcal{C}_{/C}}\mathrm{Fun}(\Delta^1,\mathcal{C})^{\times}_{/z}\right)\times_{\left(\mathrm{Fun}(\Delta^1,\mathcal{C}))^{\times}\times_{\mathcal{C}}\mathrm{Fun}(\Delta^1,\mathcal{C}))^{\times}\right)}(\mathrm{Fun}(\Delta^1,\mathcal{C})^{\Delta^1})^{\times} \\
& \simeq\mathrm{Fun}(\Delta^1,\mathcal{C})^{\times}_{/x}\times_{\left(\mathcal{C}_{/C}\times_{\mathcal{C}}(\mathrm{Fun}(\Delta^1,\mathcal{C}))^{\times}\right)}\underbrace{\left(\mathrm{Fun}(\Delta^1,\mathcal{C})^{\times}_{/z})\times_{\mathrm{Fun}(\Delta^1,\mathcal{C}))^{\times}}(\mathrm{Fun}(\Delta^1,\mathcal{C})^{\Delta^1})^{\times}\right)}_{(\ast)}\label{equexmplelocsmall1}
\end{align}
Here, the last equivalence is simply given by a permutation of components. Using Joyal's  alternative join construction and its 
according alternative overcategories (\cite[Section 4.2.1, Proposition 4.2.1.2]{luriehtt}), the right component of the fiber product 
(\ref{equexmplelocsmall1}) can be in turn expressed over the base
$\mathcal{C}_{/C}\times_{\mathcal{C}}(\mathrm{Fun}(\Delta^1,\mathcal{C}))^{\times}$ as follows.
\begin{align}\label{equexmplelocsmall2}
(\ast) & \simeq \left(\{z\}\times_{\mathrm{Fun}(\Delta^1,\mathcal{C})^{\times}}\mathrm{Fun}(\Delta^1,\mathrm{Fun}(\Delta^1,\mathcal{C})^{\times})\right)\times_{\mathrm{Fun}(\Delta^1,\mathcal{C})^{\times}}(\mathrm{Fun}(\Delta^1,\mathcal{C})^{\Delta^1})^{\times} \notag\\ 
  & \simeq \left(\{z\}\times_{\mathrm{Fun}(\Delta^1,\mathcal{C})^{\times}}\mathrm{Fun}^{\sharp}((\Delta^1)^{\sharp},\mathrm{Fun}(\Delta^1,\mathcal{C})^{\natural})\right)\times_{\mathrm{Fun}(\Delta^1,\mathcal{C})^{\times}}\mathrm{Fun}^{\sharp}_v((\Delta^1)^{\flat},\mathrm{Fun}(\Delta^1,\mathcal{C})^{\natural}) \notag\\
  & \simeq \{z\}\times_{\mathrm{Fun}(\Delta^1,\mathcal{C})^{\times}}\left(\mathrm{Fun}^{\sharp}((\Delta^1)^{\sharp},\mathrm{Fun}(\Delta^1,\mathcal{C})^{\natural})\times_{\mathrm{Fun}(\Delta^1,\mathcal{C})^{\times}}\mathrm{Fun}^{\sharp}_v((\Delta^1)^{\flat},\mathrm{Fun}(\Delta^1,\mathcal{C})^{\natural})\right) \notag\\
  & \simeq \{z\}\times_{\mathrm{Fun}(\Delta^1,\mathcal{C})^{\times}}\mathrm{Fun}^{\sharp}_{hv}((\Lambda^2_1)^{\flat},\mathrm{Fun}(\Delta^1,\mathcal{C})^{\natural}) \notag\\
  & \simeq \{z\}\times_{\mathrm{Fun}(\Delta^1,\mathcal{C})^{\times}}\mathrm{Fun}^{\sharp}_{hv}((\Delta^2)^{\flat},\mathrm{Fun}(\Delta^1,\mathcal{C})^{\natural}) \notag\\
  & \simeq \{z\}\times_{\mathrm{Fun}(\Delta^1,\mathcal{C})^{\times}}\mathrm{Fun}^{\sharp}((\Delta^1)^{\flat},\mathrm{Fun}(\Delta^1,\mathcal{C})^{\natural}) \\
  & \simeq(\mathrm{Fun}(\Delta^1,\mathcal{C})_{/z})^{\mathrm{cart.}}. \notag
\end{align}
Here, $\mathrm{Fun}^{\sharp}_v((\Delta^1)^{\flat},\mathrm{Fun}(\Delta^1,\mathcal{C})^{\natural})\subset\mathrm{Fun}^{\sharp}((\Delta^1)^{\flat},\mathrm{Fun}(\Delta^1,\mathcal{C})^{\natural})$ denotes the full $\infty$-subcategory spanned by the vertical squares,
i.e.\ those whose target morphism in $\mathcal{C}$ is an equivalence. It is the essential image of the inclusion
$(\mathrm{Fun}(\Delta^1,\mathcal{C})^{\Delta^1})^{\times}\hookrightarrow\mathrm{Fun}^{\sharp}((\Delta^1)^{\flat},\mathrm{Fun}(\Delta^1,\mathcal{C})^{\natural})$. The $\infty$-category
$\mathrm{Fun}^{\sharp}_{hv}((\Lambda^2_1)^{\flat},\mathrm{Fun}(\Delta^1,\mathcal{C})^{\natural})$ denotes the full subcategory of
$\mathrm{Fun}^{\sharp}((\Lambda^2_1)^{\flat},\mathrm{Fun}(\Delta^1,\mathcal{C})^{\natural})$ given by those spans whose first leg is 
is a vertical square and whose second leg is a cartesian square. The $\infty$-category
$\mathrm{Fun}^{\sharp}_{hv}((\Delta^2)^{\flat},\mathrm{Fun}(\Delta^1,\mathcal{C})^{\natural})$ is defined accordingly as the fiber of
$\mathrm{Fun}^{\sharp}_{hv}((\Lambda^2_1)^{\flat},\mathrm{Fun}(\Delta^1,\mathcal{C})^{\natural})$ of the obvious restriction functor along 
the inner horn inclusion $h_1^2\colon(\Lambda^2_1)^{\flat}\hookrightarrow(\Delta^2)^{\flat}$. Then the equivalence (\ref{equexmplelocsmall2}) 
follows directly from the fact that the pair of vertical and cartesian morphisms in the domain $\mathrm{Fun}(\Delta^1,\mathcal{C})$ form a 
factorization system, see specifically \cite[Example 5.2.8.15]{luriehtt} and \cite[Proposition 5.2.8.17]{luriehtt}. 
Lastly, the $\infty$-category $(\mathrm{Fun}(\Delta^1,\mathcal{C})_{/z})^{\mathrm{cart.}}:=\{z\}\times_{\mathrm{Fun}(\Delta^1,\mathcal{C})^{\times}}\mathrm{Fun}^{\sharp}((\Delta^1)^{\flat},\mathrm{Fun}(\Delta^1,\mathcal{C})^{\natural})$ denotes the wide
$\infty$-subcategory of the slice $\mathrm{Fun}(\Delta^1,\mathcal{C})_{/z}$ spanned by the cartesian squares.
Hence, via (\ref{equexmplelocsmall1}), the unstraightening $\mathrm{Un}_{\mathcal{C}}(P(x,z))$ fits into a homotopy-cartesian square of
$\infty$-categories over $\mathcal{C}$ as follows.
\[\xymatrix{
\mathrm{Un}_{\mathcal{C}}(P(x,z))\ar[r]\ar[d] & (\mathrm{Fun}(\Delta^1,\mathcal{C})_{/z})^{\mathrm{cart.}}\ar[d]^{(t_{/z},s)}\\
\mathrm{Fun}(\Delta^1,\mathcal{C})^{\times}_{/x}\ar[r]_(.42){(t_{/x},s)} & \mathcal{C}_{/C}\times_{\mathcal{C}}\mathrm{Fun}(\Delta^1,\mathcal{C})^{\times}
}\]
The base $\mathcal{C}_{/C}\times_{\mathcal{C}}\mathrm{Fun}(\Delta^1,\mathcal{C})^{\times}$ is isomorphic to the $\infty$-category
$\mathrm{Fun}_{h\mathrm{all}}(\Lambda^2_1,\mathcal{C})\times_{\mathcal{C}}\{C\}$, where
$\mathrm{Fun}_{h\mathrm{all}}(\Lambda^2_1,\mathcal{C})$ is defined as the wide $\infty$-subcategory of
$\mathrm{Fun}(\Lambda^2_1,\mathcal{C})$ given by those spans of squares whose $d^{\{0,1\}}$-face square is cartesian. The fiber
$\mathrm{Fun}_{h\mathrm{all}}(\Lambda^2_1,\mathcal{C})\times_{\mathcal{C}}\{C\}$ at $C\in\mathcal{C}$ is taken with respect to the 
restriction of $\mathcal{C}$-valued functors along the end-point inclusion $\{2\}\colon\Delta^0\hookrightarrow\Lambda^2_1$. 
We obtain a composite diagram as follows.
\[\xymatrix{
\mathrm{Un}_{\mathcal{C}}(P(x,z))\ar[r]\ar[d]\ar@/_3pc/@{->>}[dd]_{p(x,z)} & (\mathrm{Fun}(\Delta^1,\mathcal{C})_{/z})^{\mathrm{cart.}}\ar[d]\ar@{=}[r] &  (\mathrm{Fun}(\Delta^1,\mathcal{C})_{/z})^{\mathrm{cart.}}\ar@{->>}[d]\ar[r]\ar@{}[dr]|(.3){\pbs} & (\mathcal{C}_{/C})_{/z}\ar@{->>}[d]^s\\
\mathrm{Fun}(\Delta^1,\mathcal{C})^{\times}_{/x}\ar[r]\ar@{->>}[d]_{\rotatebox[origin=c]{90}{$\sim$}} & \mathrm{Fun}_{h\mathrm{all}}(\Lambda^2_1,\mathcal{C})\times_{\mathcal{C}}\{C\}\ar@/_2pc/@{-->}[r] & \mathrm{Fun}_{h\mathrm{all}}(\Delta^2,\mathcal{C})\times_{\mathcal{C}}\{C\}\ar@{->>}[l]_{\sim}^{(h^2_1)^{\ast}}\ar[r]^(.7){d_1} & \mathcal{C}_{/C} \\
\mathcal{C}_{/C}\ar@{-->}@/_2pc/[urrr]_{x\times_C(\cdot)} & & & 
}\]
Here, the $\infty$-category
$\mathrm{Fun}_{h\mathrm{all}}(\Delta^2,\mathcal{C})$ is again the wide subcategory of
$\mathrm{Fun}(\Delta^2,\mathcal{C})$ given by those prisms $\Delta^2\times\Delta^1$ whose $(d_2\times 1)$-face is a cartesian square. 
The fact that the upper right hand square yields a cartesian square in $\mathrm{Cat}_{\infty}$ can be computed 
combinatorially, using that any strict pullback of the right fibration
$s\colon(\mathcal{C}_{/C})_{/z}\twoheadrightarrow\mathcal{C}_{/C}$ represents the according homotopy-pullback.
The dotted functor  $x\times_C(\cdot)\colon\mathcal{C}_{/C}\rightarrow\mathcal{C}_{/C}$ is the essentially unique functor making the resulting triangle for any given section of the trivial fibration $(h^2_1)^{\ast}$ commute up to homotopy. Since all three squares are
homotopy-cartesian (for any given section of $(h^2_1)^{\ast}$), it follows that the composite rectangle is homotopy-cartesian. Hence, 
as the right fibration $\mathrm{Un}_{\mathcal{C}_{/C}}(\mathcal{C}_{/C}(x\times_{C}(\cdot),z))\twoheadrightarrow\mathcal{C}_{/C}$ is the homotopy-pullback 
of the representable $(\mathcal{C}_{/C})_{/z}\twoheadrightarrow \mathcal{C}_{/C}$ along the fiber-product
$x\times_C(\cdot)\colon\mathcal{C}_{/C}\rightarrow\mathcal{C}_{/C}$ as well, we obtain an equivalence
\[p(x,z)\simeq\mathrm{Un}(\mathcal{C}_{/C}(x\times_{C}(\cdot),z))\]
over $\mathcal{C}_{/C}$.
\end{proof}

\begin{example}[Smallness of representables]\label{remlocsmallrep}
Let $C\in\mathcal{C}$ be an object and consider its associated representable presheaf $y(C)\in\hat{\mathcal{C}}$. On the one hand, the indexed 
$\infty$-category $y(C)\colon\mathcal{C}^{op}\rightarrow\mathcal{S}\hookrightarrow\mathrm{Cat}_{\infty}$ satisfies
$(\partial\Delta^1\rightarrow \Delta^1)$-comprehension if and only if the natural transformation
$y(C)^{\Delta^1}\rightarrow y(C)\times y(C)$ is representable. This natural transformation however is equivalent to the diagonal
$\Delta\colon y(C)\rightarrow y(C)\times y(C)$. Thus, $y(C)$ has $(\partial\Delta^1\rightarrow \Delta^1)$-comprehension if and only if for every 
object $D\in\mathcal{C}$ and every pair of morphisms $f,g\colon D\rightarrow C$, the pullback
\[\xymatrix{
\mathrm{Eq}(y(f),y(g))\ar[d]\ar[r]\ar@{}[dr]|(.3){\pbs} & y(C)\ar[d]^{\Delta}\\
y(D)\ar[r]_(.4){(y(f),y(g))} & y(C)\times y(C)\\
}\]
is representable. This pullback computes the equalizer of the pair $y(f),y(g)\colon y(D)\rightarrow y(C)$ and hence the limit of the diagram
\[\xymatrix{y(C)\ar@<.5ex>[r]^{y(f)}\ar@<-.5ex>[r]_{y(g)} & y(D)}\]
in $\hat{\mathcal{C}}$. As the Yoneda embedding reflects limits, we see that all 
such pullbacks $\mathrm{Eq}(y(f),y(g))$ are representable if and only if $\mathcal{C}$ has all 
equalizers of pairs of morphisms with codomain $C$. Thus, all representable presheaves of $\mathcal{C}$ are locally small if and only if the
$\infty$-category $\mathcal{C}$ has all equalizers. On the other hand, suppose that $\mathcal{C}$ has a terminal object. Then $y(C)$ is 
globally small if and only if products with $C$ in $\mathcal{C}$ exist (via Example~\ref{explerepslenattransfoverpt} and the fact that the 
Yoneda embedding reflects limits). Thus, all representable presheaves of $\mathcal{C}$ are globally small if and only if the
$\infty$-category $\mathcal{C}$ has all finite products. 
In particular, assuming $\mathcal{C}$ has a terminal object, it has all finite limits if 
and only if its representable presheaves are both globally and locally small over $\mathcal{C}$. In turn, evidently, $\mathcal{C}$ has a 
terminal object if and only if the terminal presheaf $\ast\in\hat{\mathcal{C}}$ is representable.
\end{example}

\begin{example}\label{exmplecntrblty}
Consider the indexed $\infty$-category $\{\mathcal{C}\}\colon\Delta^0\rightarrow\mathrm{Cat}_{\infty}$ given by the value
$\mathcal{C}\in\mathrm{Cat}_{\infty}$. By Example~\ref{exmplsmallness}, as the $\infty$-category $\Delta^0$ has products, the indexed
$\infty$-category $\{\mathcal{C}\}$ is globally small if and only if the presheaf $\{\mathcal{C}\}^{\simeq}$ is representable, or in other 
words, if and only if the core $\mathcal{C}^{\simeq}$ is a contractible space. It is locally small if and only if all hom-spaces of
$\mathcal{C}$ are contractible.
\end{example}

The two characterizations in Example~\ref{exmplecntrblty} allow easy constructions of cartesian fibrations which are globally small but not 
locally small. For instance, we may take the free monoid on one generator and consider it as an ordinary category $\mathcal{M}$ with one 
object. Its nerve yields a cartesian fibration $N(\mathcal{M})\twoheadrightarrow\Delta^0$ which is globally small but not locally small.
Vice versa, for instance the $\infty$-category $\mathcal{S}$ of spaces is locally small over itself (i.e.\ its canonical indexing is locally 
small). Since it is a locally small $\infty$-category as well, but its slice $\infty$-categories are generally large, one can show that it 
cannot be globally small over itself. Hence, Proposition~\ref{exmplelocsmall} and Example~\ref{exmplecntrblty} together show that, in 
general, global smallness and local smallness are mutually independent properties. This mutual independence can also directly be seen via
the following example.

\begin{example}[Smallness and size]\label{exmplesize}
The $\infty$-category $\mathrm{Cat}_{\infty}$ as introduced in Section~\ref{secpre} is the $\infty$-category of 
\emph{small} $\infty$-categories.
Accordingly, $\mathrm{CAT}_{\infty}$ denotes the $\infty$-category of large $\infty$-categories
(via assumption of an inaccessible cardinal for example). Consider the fully faithful inclusion
$\iota\colon\mathrm{Cat}_{\infty}\hookrightarrow\mathrm{CAT}_{\infty}$. Given a large $\infty$-category $\mathcal{C}$, we can consider its 
associated large indexing from Example~\ref{exmplebasicindcats}.1 restricted to small $\infty$-categories given as follows.
\begin{align}\label{equsize}
\xymatrix{
\mathrm{Cat}_{\infty}\ar@{^(->}[r]^{\iota} & \mathrm{CAT}_{\infty}\ar[rr]^{\mathrm{Fun}(\phv,\mathcal{C})} & &  \mathrm{CAT}_{\infty}
}
\end{align}
The core of the indexing $\mathrm{Fun}(\phv,\mathcal{C})$ over $\mathrm{CAT}_{\infty}$ is representable as explained 
in Example~\ref{explscartfibs}.1. That means, it is a globally small $\mathrm{CAT}_{\infty}$-indexed $\infty$-category. Let us show that 
the $\infty$-category $\mathcal{C}$ is essentially small if and only if the restricted 
indexing (\ref{equsize}) is a globally small $\mathrm{Cat}_{\infty}$-indexed $\infty$-category. Therefore, recall that
$\mathcal{C}$ is essentially small if and only if there is an $\infty$-category $\mathcal{C}\sprime\in\mathrm{Cat}_{\infty}$ together 
with an equivalence $\mathcal{C}\sprime\simeq\mathcal{C}$. Now, every such equivalence between $\mathcal{C}$ and some
$\mathcal{C}\sprime\in\mathrm{Cat}_{\infty}$ induces an equivalence between the presheaf $\mathrm{CAT}_{\infty}(\iota(\cdot),\mathcal{C})$ 
and the representable presheaf
$\mathrm{Cat}_{\infty}(\phv,\mathcal{C}\sprime)\simeq\mathrm{CAT}_{\infty}(\iota(\cdot),\iota(\mathcal{C}\sprime))$. Vice versa, if
$\mathrm{CAT}_{\infty}(\iota(\cdot),\mathcal{C})$ is represented by some $\mathcal{C}\sprime\in\mathrm{Cat}_{\infty}$, the equivalence
$\mathrm{Cat}_{\infty}(\phv,\mathcal{C})\xrightarrow{\simeq}\mathrm{CAT}_{\infty}(\iota(\cdot),\mathcal{C})$ induces an equivalence
\[\mathrm{Fun}(\Delta^{\bullet},\mathcal{C}\sprime)^{\simeq}\xrightarrow{\simeq}\mathrm{Fun}(\Delta^{\bullet},\mathcal{C})^{\simeq}\]
of simplicial spaces via precomposition with the canonical inclusion $\Delta\hookrightarrow\mathrm{Cat}_{\infty}$. 
This equivalence is a functor between the complete Segal spaces associated to the quasi-categories $\mathcal{C}$ and
$\mathcal{C}\sprime$ (constructed in \cite[Section 4]{jtqcatvsss}, see Section~\ref{secext}). Thus, it in turn yields an equivalence
$\mathcal{C}\sprime\xrightarrow{\simeq}\mathcal{C}$.

Similarly, recall from \cite[Section 5.4.1]{luriehtt} that an $\infty$-category $\mathcal{C}$ is \emph{locally small} if all its 
associated hom-spaces are essentially small. For example, the large $\infty$-category $\mathrm{Cat}_{\infty}$ itself is locally small. Then 
one can show that a large $\infty$-category $\mathcal{C}$ is locally small if and only if its 
restricted indexing (\ref{equsize}) is a locally small $\mathrm{Cat}_{\infty }$-indexed $\infty$-category. In order to show 
this, one computes that the latter holds if and only if for all small $\infty$-categories $\mathcal{D}$ 
and all pairs of functors $F,G\colon\mathcal{D}\rightarrow\mathcal{C}$ the pullback
\[\xymatrix{
\mathrm{Fun}(\iota(\cdot),F\downarrow G)^{\simeq}\ar[r]\ar[d] & \mathrm{Fun}(\iota(\cdot),\mathrm{Fun}(\Delta^1,\mathcal{C}))^{\simeq}\ar[d]^{(s,t)_{\ast}}\\
\mathrm{Fun}(\phv,\mathcal{D})^{\simeq}\ar[r]_(.4){(F,G)} & \mathrm{Fun}(\iota(\cdot),\mathcal{C}\times\mathcal{C})^{\simeq}\\
}\]
of presheaves over $\mathrm{Cat}_{\infty}$ is representable. This in turn holds for a given such pair
$F,G\colon\mathcal{D}\rightarrow\mathcal{C}$ if and only the comma-$\infty$-category $F\downarrow G$ is essentially small (via the first part 
of this example). Essential smallness of these comma $\infty$-categories directly implies essential smallness of the hom-space
$\mathcal{C}(C,D)\simeq C\downarrow D$ for all pairs of objects $C,D\colon\Delta^0\rightarrow\mathcal{C}$. 
For the converse it is easy to see that local smallness of $\mathcal{C}$ and smallness of $\mathcal{D}$ implies 
essential smallness of the comma $\infty$-category $F\downarrow G$ for all functors $F,G\colon\mathcal{D}\rightarrow\mathcal{C}$.

Alternatively, one can consider the naive large indexing
\begin{align}\label{equsize2}
\xymatrix{
\mathcal{S}\ar@{^(->}[r]^{\iota} & \mathcal{S}^+\ar[rr]^{\mathrm{Fun}(\phv,\mathcal{C})} & &  \mathrm{CAT}_{\infty}
}
\end{align}
where $\mathcal{S}^+$ denotes the $\infty$-category of large $\infty$-groupoids accordingly. Then one computes that the $\mathcal{S}$-indexed 
large $\infty$-category (\ref{equsize2}) is globally small if and only if the core $\mathcal{C}^{\simeq}$ is a small space. It is a locally 
small $\mathcal{S}$-indexed large $\infty$-category again if and only if $\mathcal{C}$ is a locally small $\infty$-category.
\end{example}

In order to deduce various comprehension schemes from a given set of comprehension schemes for a fixed $\mathcal{C}$-indexed
$\infty$-category $\mathcal{E}$ (as stated to be possible in principle in \cite[Paragraph 8.5]{benaboufibfound}), we observe that the class 
of functors
\[\mathrm{Comp}(\mathcal{E}):=\{(G\colon I\rightarrow J)\in\mathbf{S}\mid \mathcal{E}\text{ satisfies }G\text{-comprehension}\}\]
satisfies the following stability properties, where $G$-comprehension for a map $G\colon I\rightarrow J$ of simplicial sets was defined in
Notation~\ref{notationGcompstrict}.

\begin{lemma}\label{lemmacompclosureprops}
Let $\mathcal{E}$ be a $\mathcal{C}$-indexed $\infty$-category.
\begin{enumerate}
\item The class $\mathrm{Comp}(\mathcal{E})$ contains all weak categorical equivalences. It is closed under composition and under
homotopy-pushouts (in the Joyal model structure). In particular, it is closed under pushouts along monomorphisms.
\item The class of monomorphisms in $\mathrm{Comp}(\mathcal{E})$ contains all trivial cofibrations in the Joyal model structure and 
is closed under compositions and arbitrary pushouts.
\item If $\mathcal{C}$ has pullbacks, the class $\mathrm{Comp}(\mathcal{E})$ is furthermore closed under finite homotopy-colimits in the 
Joyal model structure and has the right cancellation property.
\end{enumerate}
\end{lemma}
\begin{proof}
Part 1 basically follows directly from Lemma~\ref{lemmarepnattransfbasics}.1. More precisely, the class $\mathrm{Comp}(\mathcal{E})$ 
contains the weak categorical equivalences by Example~\ref{exmplequivcomp} and Lemma~\ref{lemmaGcompstrict}. Furthermore, as the class of 
representable natural transformations is stable under pullbacks, and the Yoneda embedding
$\mathrm{Cat}_{\infty}^{op}\rightarrow\mathrm{Fun}(\mathrm{Cat}_{\infty},\mathcal{S})$ preserves small limits, it follows that the class 
$\mathrm{Comp}(\mathcal{E})$ is closed under homotopy-pushouts. As the Joyal model structure is left proper, it follows that
$\mathrm{Comp}(\mathcal{E})$ is closed under pushouts along monomorphisms as well.

For Part 2 we only are left to show that arbitrary pushouts of monomorphisms in $\mathrm{Comp}(\mathcal{E})$ are contained in
$\mathrm{Comp}(\mathcal{E})$ again. Therefore, let $j\colon A\hookrightarrow B$ be a monomorphism in $\mathrm{Comp}(\mathcal{E})$ and 
consider a pushout square of the form
\[\xymatrix{
A\ar@{^(->}[d]_j\ar[r]^f\ar@{}[dr]|(.7){\pos} & C\ar[d] \\
B\ar[r] & D
}\]
in $\mathbf{S}$. We can factor the map $f\colon A\rightarrow C$ into a monomorphism $i\colon A\rightarrow C\sprime$ 
followed by a trivial isofibration $q\colon C\sprime\twoheadrightarrow C$, and consider the induced factorization of 
pushouts
\[\xymatrix{
A\ar@{^(->}[d]_j\ar@{^(->}[r]^i\ar@{}[dr]|(.7){\pos} & C\sprime\ar@{^(->}[d]\ar@{->>}[r]^q_{\sim}\ar@{}[dr]|(.7){\pos}  & C\ar@{^(->}[d] \\
B\ar[r] & D\sprime\ar[r] & D.
}\]
Then the map $C\sprime\rightarrow D\sprime$ is contained in $\mathrm{Comp}(\mathcal{E})$ by Part 2. The map
$D\sprime\rightarrow D$ is a weak categorical equivalence, again because the Joyal model structure is left proper. By
Example~\ref{exmplequivcomp}, it follows that the map $C\sprime\rightarrow D\sprime$ is contained 
in $\mathrm{Comp}(\mathcal{E})$ if and only if $C\rightarrow D$ is contained in $\mathrm{Comp}(\mathcal{E})$.
Part 3 follows directly from Lemma~\ref{lemmarepnattransfbasics}.2.
\end{proof}

\begin{example}\label{explebasicdefschemes}
Following \cite[Lemma B1.3.15]{elephant}, we say that a $\mathcal{C}$-indexed $\infty$-category  $\mathcal{E}$ satisfies \emph{definability 
of invertibility (for morphisms)} if it has $\mathrm{inv}$-comprehension, where $\mathrm{inv}\colon\Delta^1\rightarrow I\Delta^1$ is the 
embedding of $\Delta^1$ into the free groupoid generated by it.

It has \emph{definability of identity (of parallel morphisms)} if it has $\nabla_1$-comprehension, where the map
$\nabla_1\colon(\Delta^1\cup_{\partial\Delta^1}\Delta^1)\rightarrow\Delta^1$ identifies the two parallel non-degenerate 
morphisms. The domain can be described as the nerve of the parallel morphisms category
$\xymatrix{\bullet\ar@<.5ex>[r]\ar@<-.5ex>[r] & \bullet}$.

Furthermore, say $\mathcal{E}$ has \emph{definability of identity of parallel $n$-morphisms} if it has
$\nabla_n$-comprehension, where the map
$\nabla_n\colon(\Delta^n\cup_{\partial\Delta^n}\Delta^n)\rightarrow\Delta^n$ is the codiagonal identifying the two 
parallel non-degenerate $n$-morphisms.
\end{example}

\begin{remark}
Definability of identity of $n$-morphisms for $n\geq 1$ is phrased as a principle of strict identity rather than of equivalence. However, 
it can equivalently be expressed as comprehension for the canonical inclusion
$\nabla_n^{\simeq}\colon(\Delta^n\cup_{\partial\Delta^n}\Delta^n)\hookrightarrow B^n$ where $B^n$ denotes the pushout
\begin{align}\label{diagremdefofequinc}
\begin{gathered}
\xymatrix{
\partial\Delta^n\times I\Delta^1\ar@{^(->}[r]^(.55){\delta^n\times 1}\ar[d]_{\pi_1}\ar@{}[dr]|(.7){\pos} & \Delta^n\times I\Delta^1\ar[d] \\
\partial\Delta^n\ar@{^(->}[r] & B^n.
}
\end{gathered}
\end{align}	
The simplicial set $B^n$ essentially consists of two $n$-cells glued together along their boundary and a homotopy pasted in between. Since the 
space $I\Delta^1$ is contractible as a quasi-category and the Joyal model structure is left proper, it follows that the canonical map 
$B^n\rightarrow \Delta^n$ induced by the projection $\pi_1\colon\Delta^n\times I\Delta^1\rightarrow\Delta^n$ is a categorical equivalence. 
The composition
$(\Delta^n\cup_{\partial\Delta^n}\Delta^n)\hookrightarrow B^n\xrightarrow{\sim}\Delta^n$ is exactly $\nabla_n$, and thus it follows that
$\nabla_n$-comprehension is equivalent to $\nabla_n^{\simeq}$-comprehension via Lemma~\ref{lemmacompclosureprops}.1.

Note that in the case $n=1$, the pushout (\ref{diagremdefofequinc}) is in fact a pushout of (nerves of) 1-categories. However, the according 
pushout computed in $\mathrm{Cat}$ (instead of $\mathbf{S}$) is just $\Delta^1$ itself. We hence obtain no such inclusion of 1-categories 
which represents definability of identity of parallel morphisms in an indexed ($\infty$-)category.
\end{remark}

The following lemma can be thought of as an $\infty$-categorical generalization of \cite[Lemma 1.3.15]{elephant} 
extended by its natural higher analogues.

\begin{proposition}\label{lemmalocsm->definv}
Let $\mathcal{E}$  be a $\mathcal{C}$-indexed $\infty$-category and $\delta^n\colon\partial\Delta^n\hookrightarrow\Delta^n$ be the standard 
boundary inclusion for $n\geq 0$.
\begin{enumerate}
\item Let $n\geq 1$. If $\mathcal{E}$ has definable identity of parallel $n$-morphisms, it has $\delta^{n+1}$-comprehension.
\item Suppose all slice $\infty$-categories of $\mathcal{C}$ have equalizers of global sections.
\begin{enumerate}
\item Let $n\geq 1$. If $\mathcal{E}$ has $\delta^n$-comprehension, it has definable identity for parallel $n$-morphisms.
\item If $\mathcal{E}$ is locally small, it has $\delta^n$-comprehension for all $n\geq 1$.
\item If $\mathcal{E}$ is locally small, it has definability of invertibility. 
\end{enumerate}
\end{enumerate}
\end{proposition}
\begin{proof}
For Part 1.\ let $n\geq 1$ and $0<i<n$. We may consider a factorization of the $n$-th boundary inclusion given by the 
bottom left square in the following diagram.
\[\xymatrix{
\Lambda^{n+1}_i\ar@{^(->}[r]^{h^{n+1}_i}\ar@{^(->}[d]_{\iota_2}\ar@{}[dr]|(.7){\pos} & \Delta^{n+1}\ar[d]^{\iota_2} & \\
\Delta^n\cup_{\partial\Delta^n}\Lambda^{n+1}_i\ar[r]_{(\mathrm{id},h^{n+1}_i)} & \Delta^n\cup_{\partial\Delta^n}\Delta^{n+1}\ar[d]_{(d^i,\mathrm{id})} & \Delta^n\cup_{\partial\Delta^n}\Delta^n\ar@{_(->}[l]_{(\mathrm{id},d^i)}\ar[d]^{\nabla_n}\ar@{}[dl]|(.7){\rotatebox[origin=c]{-90}{$\pos$}} \\ 
\partial\Delta^{n+1}\ar[r]_{\delta^{n+1}}\ar[u]^{\rotatebox[origin=c]{90}{$\cong$}} & \Delta^{n+1} & \Delta^n\ar[l]^{d_i}
}\]
The inner horn inclusion $h^{n+1}_i\colon\Lambda^{n+1}_i\rightarrow\Delta^{n+1}$ is contained in $\mathrm{Comp}(\mathcal{E})$ by
Example~\ref{exmplequivcomp}, the codiagonal $\nabla_n\colon\Delta^n\cup_{\partial\Delta^n}\Delta^n\rightarrow\Delta^n$
is contained in the class $\mathrm{Comp}(\mathcal{E})$ by assumption. It thus follows from Lemma~\ref{lemmacompclosureprops}.2 
that the composition $\delta^{n+1}\colon\partial\Delta^{n+1}\rightarrow\Delta^{n+1}$ is contained in the class
$\mathrm{Comp}(\mathcal{E})$ as well.

For Part 2.(a) assume that $\mathcal{E}$ has $\delta^n$-comprehension. 
The natural transformation $\nabla_n^{\ast}\colon(\mathcal{E}^{\Delta^n})^{\simeq}\rightarrow(\mathcal{E}^{\Delta^n\cup_{\partial\Delta^n}\Delta^n})^{\simeq}$ is equivalent to the diagonal
\[\Delta_{(\delta^n)^{\ast}}\colon(\mathcal{E}^{\Delta^n})^{\simeq}\rightarrow(\mathcal{E}^{\Delta^n})^{\simeq}\times_{(\mathcal{E}^{\partial\Delta^n})^{\simeq}}(\mathcal{E}^{\Delta^n})^{\simeq}.\]
Let $C\in\mathcal{C}$ and consider a natural transformation
$(\ulcorner\alpha\urcorner,\ulcorner\beta\urcorner)\colon y(C)\rightarrow(\mathcal{E}^{\Delta^n})^{\simeq}\times_{(\mathcal{E}^{\partial\Delta^n})^{\simeq}}(\mathcal{E}^{\Delta^n})^{\simeq}$ which represents $n$-simplices 
$\alpha,\beta\in\mathcal{E}(C)^{\Delta^n}$ together with an equivalence $e\colon(\delta^n)^{\ast}(\alpha)\simeq(\delta^n)^{\ast}(\beta)$ 
between their respective boundaries. By assumption there is a pullback square of the form
\begin{align}\label{diaglemmalocsm->definv1}
\begin{gathered}
\xymatrix{
y\bar{C}\ar[d]_{y(\varepsilon)}\ar[r]^(.45){\ulcorner\gamma\urcorner}\ar@{}[dr]|(.3){\pbs} & (\mathcal{E}^{\Delta^n})^{\simeq}\ar[d]^{(\delta^n)^{\ast}}\\
y(C)\ar[r]_(.4){(\delta^n)^{\ast}\ulcorner\alpha\urcorner} & (\mathcal{E}^{\partial\Delta^n})^{\simeq}
}
\end{gathered}
\end{align}
in $\hat{\mathcal{C}}$. As diagonals are stable under pullback (\cite[Lemma 3.4.12]{as_soa}), the square 
\begin{align}\label{diaglemmalocsm->definv2}
\begin{gathered}
\xymatrix{
y\bar{C}\ar[d]_{\Delta_{y(\varepsilon)}}\ar[r]^(.45){\ulcorner\gamma\urcorner}\ar@{}[dr]|(.3){\pbs} & (\mathcal{E}^{\Delta^n})^{\simeq}\ar[d]^{\Delta_{(\delta)^{\ast}}}\\
y\bar{C}\times_{y(C)}y\bar{C}\ar[r]_(.4){(\ulcorner\gamma\urcorner,\ulcorner\gamma\urcorner)} & (\mathcal{E}^{\Delta^n})^{\simeq}\times_{(\mathcal{E}^{\partial\Delta^n})^{\simeq}}(\mathcal{E}^{\Delta^n})^{\simeq} 
}
\end{gathered}
\end{align}

is cartesian as well. The natural transformation
$\ulcorner\alpha\urcorner\colon y(C)\rightarrow(\mathcal{E}^{\Delta^n})^{\simeq}$ factors via some
\[y(C)\xrightarrow{y(f_{\alpha})}y\bar{C}\xrightarrow{\ulcorner\gamma\urcorner}(\mathcal{E}^{\Delta^n})^{\simeq}\]
by (\ref{diaglemmalocsm->definv1}), and by virtue of the equivalence $e\colon(\delta^n)^{\ast}(\alpha)\simeq(\delta^n)^{\ast}(\beta)$ so 
does $\ulcorner\beta\urcorner\colon y(C)\rightarrow(\mathcal{E}^{\Delta^n})^{\simeq}$ via some
\[y(C)\xrightarrow{y(f_{\beta})}y\bar{C}\xrightarrow{(\ulcorner\gamma\urcorner}(\mathcal{E}^{\Delta^n})^{\simeq}.\]
We obtain a factorization of the pair
$(\ulcorner\alpha\urcorner,\ulcorner\beta\urcorner)\colon y(C)\rightarrow(\mathcal{E}^{\Delta^n})^{\simeq}\times_{(\mathcal{E}^{\partial\Delta^n})^{\simeq}}(\mathcal{E}^{\Delta^n})^{\simeq}$
via 
\[y(C)\xrightarrow{(f_{\alpha},f_{\beta})} y\bar{C}\times_{y(C)}y\bar{C}\xrightarrow{(\ulcorner\gamma\urcorner,\ulcorner\gamma\urcorner)}(\mathcal{E}^{\Delta^n})^{\simeq}\times_{(\mathcal{E}^{\partial\Delta^n})^{\simeq}}(\mathcal{E}^{\Delta^n})^{\simeq}.\]
Thus, as (\ref{diaglemmalocsm->definv2}) is cartesian as well, the pullback
\[\xymatrix{
\bullet\ar[d]\ar[r]\ar@{}[dr]|(.3){\pbs} & (\mathcal{E}^{\Delta^n})^{\simeq}\ar[d]^{\Delta_{(\delta)^{\ast}}}\\
y(C)\ar[r]_(.25){(\ulcorner\alpha\urcorner,\ulcorner\beta\urcorner)} & (\mathcal{E}^{\Delta^n})^{\simeq}\times_{(\mathcal{E}^{\partial\Delta^n})^{\simeq}}(\mathcal{E}^{\Delta^n})^{\simeq} 
}\]
can be computed as the pullback
\[\xymatrix{
\bullet\ar[d]\ar[r]\ar@{}[dr]|(.3){\pbs} & y\bar{C}\ar[d]^{\Delta_{\varepsilon}}\\
y(C)\ar[r]_(.3){(f_{\alpha},f_{\beta})} & y\bar{C}\times_{y(C)}y\bar{C}.
}\]
This pullback however computes the equalizer of the pair of global sections $f_{\alpha},f_{\beta}\colon y(\ast)\rightarrow y(\varepsilon)$ in 
the presheaf $\infty$-category $\hat{\mathcal{C}}_{/y(C)}\simeq\widehat{\mathcal{C}_{/C}}$. Thus, this pullback is representable whenever 
the equalizers of $f_{\alpha}$ and $f_{\beta}$ exists in the slice $\mathcal{C}_{/C}$.
This finishes Part (a).

Part 2.(b) follows immediately from Parts 1.\ and 2.(a) since local smallness is $\delta^1$-comprehension.

For Part 2.(c) let $J^{(2)}\subset I\Delta^1$ be the subsimplicial set given by exactly one of the two non-degenerate
2-simplices. It can be thought of as the free left (or right) invertible edge, depicted as follows.
\[\xymatrix{
& [0]\ar[dr]^{f} & \\
[1]\ar[ur]^{f^{-1}}\ar[rr]_{s_0([1])} & & [1]
}\]
The pushout $K:=J^{(2)}\cup_{\Delta^1} J^{(2)}$ along the boundaries $d^0$ and $d^2$ is the free biinvertible map, the 
inclusion $\mathrm{inv}\colon\Delta^1\rightarrow I\Delta^1$ factors through $K$. The natural map
$K\rightarrow I\Delta^1$ is a weak categorical equivalence (both are interval objects in the Joyal model structure), and 
hence by Example~\ref{exmplequivcomp} and Lemma~\ref{lemmacompclosureprops} we can reduce 
$\mathrm{inv}$-comprehension to $(\Delta^1\rightarrow K)$-comprehension. Since the boundaries
$d^i\colon \Delta^1\rightarrow J^{(2)}$ for $i=0,2$ are monomorphisms, we can reduce
$(\Delta^1\rightarrow K)$-comprehension further to $(d^0\colon\Delta^1\rightarrow J^{(2)})$-comprehension 
again by Lemma~\ref{lemmacompclosureprops}.

Therefore, we factor the inclusion $d^0\colon\Delta^1\rightarrow J^{(2)}$ through subobjects
\[\Delta^1\rightarrow\partial J^{(2)}\rightarrow (\partial J^{(2)})_+\rightarrow J^{(2)}\]
such that each component is the pushout of a map in $\mathrm{Comp}(\mathcal{E})$ along a monomorphism. Then we can use
Lemma~\ref{lemmacompclosureprops} once more to finish the proof. First, consider the circle given by the pushout
\begin{align}\label{equlocsm->definv1}
\begin{gathered}
\xymatrix{
\partial\Delta^1\ar[r]^{\delta^1}\ar@{^(->}[d]_{\bar{\delta^1}}\ar@{}[dr]|(.7){\pos} & \Delta^1\ar[d] \\
\Delta^1\ar[r] & \partial J^{(2)}
}
\end{gathered}
\end{align}

where $\bar{\delta}^1\colon\partial\Delta^1\rightarrow\Delta^1$ swaps the endpoints. Second, consider the pushout
\begin{align}\label{equlocsm->definv2}
\begin{gathered}
\xymatrix{
\Lambda^2_1\ar[r]^{h^2_1}\ar@{^(->}[d]^{}\ar@{}[dr]|(.7){\pos}  & \Delta^2\ar[d] \\
\partial J^{(2)}\ar[r] & (\partial J^{(2)})_+
}
\end{gathered}
\end{align}
where the left vertical map picks out the two non-degenerate edges added in (\ref{equlocsm->definv1}) concatenated at
$[0]$. Third, we obtain a pushout
\[\xymatrix{
\Delta^1\cup_{\partial\Delta^1}\Delta^1\ar[r]^(.6){\nabla_1}\ar@{^(->}[d]^{}\ar@{}[dr]|(.7){\pos}  & \Delta^1\ar[d] \\
(\partial J^{(2)})_+\ar[r] & J^{(2)}
}\]
where the left vertical map picks out the pair consisting of the morphism $[1]\rightarrow [1]$ freely added in (\ref{equlocsm->definv2}) and 
the identity on $[1]$.

The composition of the bottom maps of the three pushout squares is exactly the inclusion
$d^0\colon\Delta^1\rightarrow J^{(2)}$.
The $\mathcal{C}$-indexed $\infty$-category $\mathcal{E}$ has $\delta^1$-comprehension by assumption, and thus it has
$\nabla_1$-comprehension by Part 2.(a). It has $h^2_1$-comprehension by Example~\ref{exmplequivcomp} and thus the statement follows by
Lemma~\ref{lemmacompclosureprops} as claimed.
\end{proof}

\begin{corollary}\label{cormonocomp}
Suppose $\mathcal{C}$ has pullbacks. Then for a $\mathcal{C}$-indexed $\infty$-category $\mathcal{E}$ the following are equivalent.
\begin{enumerate}
\item $\mathcal{E}$ is both globally small and locally small.
\item $\mathcal{E}$ has both $(\emptyset\rightarrow\Delta^0)$-comprehension and $(\emptyset\rightarrow\Delta^1)$-comprehension.
\item $\mathcal{E}$ has $j$-comprehension for every monomorphism $j$ between simplicial sets with finitely many non-degenerate simplices. 
\item $\mathcal{E}$ has $j$-comprehension for all finite extensions of simplicial sets, i.e\ for all monomorphisms $j\colon I\rightarrow J$ 
of simplicial sets such that the complement $J\setminus I$ (considered as an $\mathbb{N}$-indexed collection of sets) contains only finitely 
many non-degenerate simplices of $J$. 
\end{enumerate}
\end{corollary}
\begin{proof}
The monomorphisms in $\mathbf{S}$ are exactly the free cofibrations generated by the class of boundary inclusions
$\partial\Delta^n\hookrightarrow \Delta^n$ for $n\geq 0$. In particular, every monomorphism between simplicial sets that adds only finitely 
many non-degenerate simplices is the composition of pushouts of boundary inclusions. Thus, the equivalence of 1 -- 4 follows from
Lemma~\ref{lemmacompclosureprops}.3 and Proposition~\ref{lemmalocsm->definv}.
\end{proof}

\begin{remark}
Recall that a quasi-category is finite if it is weakly categorically equivalent to a simplicial set with finitely many non-degenerate 
simplices. It thus follows from Corollary~\ref{cormonocomp} that an $\infty$-category $\mathcal{E}$ indexed over an $\infty$-category
$\mathcal{C}$ with pullbacks is globally and locally small if and only if it has $(\emptyset\rightarrow\mathcal{I})$-comprehension for all 
finite quasi-categories $\mathcal{I}$.
We will see in Corollary~\ref{corsmallallcompschemes} that the finiteness condition vanishes in case the base $\mathcal{C}$ has all small 
limits. We will furthermore provide an internal characterization of the globally small and locally small indexed $\infty$-categories over 
left exact bases in Section~\ref{secext}. 
\end{remark} 

The following corollary addresses the three examples of comprehension schemes listed in \cite[Paragraph 8.5]{benaboufibfound}, there 
considered to be implausible for a general category (in a non-univalent meta-theory) to satisfy. It shows that all three schemes hold for
example for the canonical indexing over any locally cartesian closed and left exact $\infty$-category.

\begin{corollary}\label{coridprinciples8.7}
Let $\mathcal{C}$ be an $\infty$-category and $\mathcal{E}$ be a $\mathcal{C}$-indexed $\infty$-category. 
\begin{enumerate}
\item Whenever $\mathcal{E}$ is locally small, it has $(\partial\Delta^1\rightarrow\Delta^0)$-comprehension. That means identity of objects 
in $\mathcal{E}$ is definable.
\item $\mathcal{E}$ has $(\mathrm{inv}\colon\Delta^1\rightarrow I\Delta^1)$-comprehension if and only if it has
$(\Delta^1\rightarrow \Delta^0)$-comprehension. That means invertibility of morphisms in $\mathcal{E}$ is 
definable (Examples~\ref{explebasicdefschemes}) if and only if identities in $\mathcal{E}$ are definable (in all morphisms in
$\mathcal{E}$).
\item $\mathcal{E}$ has $(\nabla\colon\Delta^1\sqcup\Delta^1\rightarrow\Delta^1)$-comprehension if and only if it has 
$((\mathrm{id},\{0\}),(\mathrm{id},\{1\}))\colon\Delta^1\sqcup\Delta^1\rightarrow\Delta^1\times I\Delta^1)$-comprehension.
In either case we say that identity of morphisms in $\mathcal{E}$ is definable.
Suppose $\mathcal{C}$ has finite limits, and suppose that identity of objects in $\mathcal{E}$ is definable. Then identity of parallel 
morphisms in $\mathcal{E}$ is definable (Examples~\ref{explebasicdefschemes}) if and only if identity of morphisms in $\mathcal{E}$ is 
definable. 
\end{enumerate}
In particular, whenever $\mathcal{E}$ is locally small and $\mathcal{C}$ has all finite limits, it follows that equality of objects, 
equality of morphisms and identities in $\mathcal{E}$ are definable altogether.
\end{corollary}
\begin{proof}
For Part 1 consider the diagram
\[\xymatrix{
\partial\Delta^1\ar[r]^{\delta^1}\ar[dr] & \Delta^1\ar[r]^{\mathrm{inv}} & I\Delta^1\ar[dl]^{\simeq}\\
 & \Delta^0 & 
}\]
of simplicial sets. If $\mathcal{E}$ is locally small, the map $\delta^1$ is contained in the class $\mathrm{Comp}(\mathcal{E})$ by 
definition, and so is the map $\mathrm{inv}$ by Proposition~\ref{lemmalocsm->definv}.2.(c). Now by Lemma~\ref{lemmacompclosureprops}.1, since 
every equivalence of quasi-categories is contained in $\mathrm{Comp}(\mathcal{E})$ as well, so is the composition of Part 1.
For Part 2 we just note that $\Delta^1\rightarrow\Delta^0$ factors through
$\Delta^1\xrightarrow{\mathrm{inv}}I\Delta^1\xrightarrow{\sim}\Delta^0$.

In Part 3, the two comprehension schemes are equivalent as the two respective maps commute over the categorical equivalence
$\pi_1\colon\Delta^1\times I\Delta^1\rightarrow\Delta^1$. Whenever $\mathcal{E}$ has
$(\nabla\colon\partial\Delta^1\rightarrow\Delta^0)$-comprehension, it has
$(\nabla\sqcup\nabla\colon\partial\Delta^1\sqcup\partial\Delta^1\rightarrow\Delta^0\sqcup\Delta^0)$-comprehension by Lemma~\ref{lemmacompclosureprops}.3. It follows that the right vertical map in the pushout
\[\xymatrix{
\partial\Delta^1\sqcup\partial\Delta^1\ar[d]_{\nabla\sqcup\nabla}\ar@{^(->}[rr]^(.55){((0_1,0_2),(1_1,1_2))}\ar@{}[drr]|(.7){\pos} & & \Delta^1\sqcup\Delta^1\ar[d] \\
\Delta^0\sqcup\Delta^0\ar@{^(->}[rr] & & \Delta^1\cup_{\partial\Delta^1}\Delta^1 
}\]
is contained in the class $\mathrm{Comp}(\mathcal{E})$ as well by Lemma~\ref{lemmacompclosureprops}.1. Lastly, the triangle
\[\xymatrix{
\Delta^1\cup_{\partial\Delta^1}\Delta^1\ar[dr]_{\nabla_1}\ar[r] & \Delta^1\sqcup\Delta^1\ar[d]^{\nabla} \\
 & \Delta^1
}\] 
of simplicial sets commutes. As we just have seen that the top map is contained in the class $\mathrm{Comp}(\mathcal{E})$, it follows again 
from Lemma~\ref{lemmacompclosureprops} that $\nabla$ is contained in $\mathrm{Comp}(\mathcal{E})$ if and only if $\nabla_1$ is so.
\end{proof}

We end this section with the following useful transition result for comprehension schemes which is an $\infty$-categorical generalization of 
\cite[Proposition 1.3.17]{elephant}.

\begin{lemma}\label{lemmachangeofbasecomp}
Let $\mathcal{E}$ be a $\mathcal{C}$-indexed $\infty$-category and suppose $\mathcal{D}$ is an $\infty$-category with pullbacks. If 
$F\colon\mathcal{D}\rightarrow\mathcal{C}$ is a functor with a right adjoint, then
$\mathrm{Comp}(F^{\ast}{\mathcal{E}})\subseteq\mathrm{Comp}(\mathcal{E})$.
\end{lemma}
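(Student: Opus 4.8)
The plan is to verify the defining condition of $G$-comprehension for $p$ directly through the comma-category criterion of Lemma~\ref{lemmabenabou}, transporting the representations that witness $G$-comprehension for $F^{\ast}p$ from the slices of $\mathcal{D}$ back up to the slices of $\mathcal{C}$ along the adjunction $F\adj U$. So I would fix a functor $G\colon I\rightarrow J$ lying in $\mathrm{Comp}_{F^{\ast}p}$ together with a vertical diagram $X\in\mathrm{Rect}(I,\mathcal{E})$ over $C:=p(X)$, and show that the right fibration $G^{\ast}\downarrow X\rightarrow\mathcal{C}_{/C}$ is equivalent to a representable one. Writing $\eta,\varepsilon$ for the unit and counit of $F\adj U$, the first step is to record the induced slice-level adjunction: the right adjoint gives $U_C\colon\mathcal{C}_{/C}\rightarrow\mathcal{D}_{/U(C)}$, $c\mapsto U(c)$, and using the pullbacks in $\mathcal{D}$ together with $\eta$ one builds a left adjoint $\Phi_C\colon\mathcal{D}_{/U(C)}\rightarrow\mathcal{C}_{/C}$, $(d\colon D'\rightarrow U(C))\mapsto(\varepsilon_C\circ F(d)\colon F(D')\rightarrow C)$, so that $\Phi_C\adj U_C$. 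This is exactly where the two hypotheses of the statement enter: the right adjoint $U$ produces $U_C$, and the pullbacks in $\mathcal{D}$ make $\Phi_C$ well defined.

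Next I would connect the comma categories across the change of base. Since change of base unstraightens to a pullback of fibrations (Examples~\ref{explscartfibs}.3), the limit-preservation of the hom-functors defining the rectangular diagram categories (Remark~\ref{rectlimits}) yields $\mathrm{Rect}(K,F^{\ast}\mathcal{E})\simeq F^{\ast}\mathrm{Rect}(K,\mathcal{E})$ for every simplicial set $K$, naturally in $K$. Choosing the cartesian lift $\tilde X:=\varepsilon_C^{\ast}X$ of $X$ over $FU(C)$ and the corresponding object $X^{\flat}\in\mathrm{Rect}(I,F^{\ast}\mathcal{E})$ over $U(C)$, a fibrewise computation of the spaces of $J$-extensions should identify the right fibration $G^{\ast}\downarrow X^{\flat}\rightarrow\mathcal{D}_{/U(C)}$ with the pullback $\Phi_C^{\ast}\bigl(G^{\ast}\downarrow X\bigr)$ along the slice functor $\Phi_C$.

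By hypothesis $F^{\ast}p$ has $G$-comprehension, so Lemma~\ref{lemmabenabou} makes $G^{\ast}\downarrow X^{\flat}$, hence $\Phi_C^{\ast}(G^{\ast}\downarrow X)$, representable over $\mathcal{D}_{/U(C)}$, say by an object $\rho\colon R\rightarrow U(C)$. The natural candidate representing object upstairs is then $\Phi_C(\rho)=(\varepsilon_C\circ F(\rho)\colon F(R)\rightarrow C)$, and I would try to exhibit the associated arrow as a terminal object of the comma $\infty$-category $G^{\ast}\downarrow X$, using the counit $\Phi_C(U_C(c))\rightarrow c$ of $\Phi_C\adj U_C$ to manufacture the comparison maps and Lemma~\ref{lemmacompclosureprops}.1 (applied to the right fibration $G^{\ast}\downarrow X\rightarrow\mathcal{C}_{/C}$) to control the relevant universal arrows.

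The hard part will be precisely this last transport step: upgrading representability of the pulled-back right fibration $\Phi_C^{\ast}(G^{\ast}\downarrow X)$ over $\mathcal{D}_{/U(C)}$ to representability of $G^{\ast}\downarrow X$ itself over $\mathcal{C}_{/C}$. The delicate point is that pullback along $\Phi_C$ is a priori only guaranteed to \emph{preserve} representations rather than to \emph{reflect} them, so the entire weight of the argument rests on analysing the adjunction $\Phi_C\adj U_C$ closely enough to see that the representing object $\rho$ found downstairs, after applying $\Phi_C$, genuinely supplies a universal arrow into $X$ rather than merely into its image under $\Phi_C$. I expect this reflection property to be the crux requiring genuine care, and it is there that the combination ``$F$ has a right adjoint'' and ``$\mathcal{D}$ has pullbacks'' must be deployed in full; once the universal arrow is pinned down, Lemma~\ref{lemmabenabou} closes the argument.
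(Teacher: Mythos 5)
You are attempting to prove the inclusion exactly as printed, namely that $G$-comprehension for $F^{\ast}p$ over $\mathcal{D}$ forces $G$-comprehension for $p$ over $\mathcal{C}$. That is not what the paper's proof establishes, and in that direction the claim is false. The proof in the paper (and every later application, e.g.\ Example~\ref{exmpleunivcomptransfer} and Proposition~\ref{proprightadj}) goes the other way: comprehension schemes of $p$ are \emph{inherited} by the base change, i.e.\ $\mathrm{Comp}_p\subseteq\mathrm{Comp}_{F^{\ast}p}$; the inclusion symbol in the displayed statement is evidently reversed. For a concrete failure of the direction you pursue, take $\mathcal{D}=\Delta^0$, let $F\colon\Delta^0\rightarrow\mathcal{S}$ pick out the initial object $\emptyset$ (this has a right adjoint, the unique functor $\mathcal{S}\rightarrow\Delta^0$, since $\mathcal{S}(\emptyset,X)\simeq\ast$), and let $p$ be the canonical fibration $t\colon\mathrm{Fun}(\Delta^1,\mathcal{S})\twoheadrightarrow\mathcal{S}$. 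Because $\emptyset$ is strict initial, $F^{\ast}p\simeq\mathrm{id}_{\Delta^0}$ satisfies every comprehension scheme, yet $p$ is not small (as noted after Proposition~\ref{exmplelocsmall}). This is why the step you yourself flag as the crux cannot be completed: Lemma~\ref{lemmacompclosureprops}.1 makes the projection $\Phi_C^{\ast}(G^{\ast}\downarrow X)\rightarrow G^{\ast}\downarrow X$ a \emph{left} adjoint, and left adjoints do not carry terminal objects forward, so a representing object for $\Phi_C^{\ast}(G^{\ast}\downarrow X)$ over $\mathcal{D}_{/U(C)}$ yields no universal arrow in $G^{\ast}\downarrow X$ over $\mathcal{C}_{/C}$. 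Your observation that pullback along $\Phi_C$ ``preserves rather than reflects'' representations is precisely the point at which the claim, read literally, breaks irreparably.

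The statement actually proved (and used) is obtained by running your machinery in the opposite direction, and several of your ingredients survive. One identifies $\mathrm{Rect}(K,F^{\ast}\mathcal{E})$ with the pullback of $\mathrm{Rect}(K,\mathcal{E})$ along $F$, as you do; since both are right fibrations over their bases and $F$ is a left adjoint, Lemma~\ref{lemmacompclosureprops}.1 makes each comparison map $p^{\ast}F\colon\mathrm{Rect}(K,F^{\ast}\mathcal{E})\rightarrow\mathrm{Rect}(K,\mathcal{E})$ a left adjoint. Given $G\in\mathrm{Comp}_p$, the commuting square $p^{\ast}F\circ G^{\ast}\simeq G^{\ast}\circ p^{\ast}F$ exhibits this composite as a left adjoint, and the cancellation argument of Lemma~\ref{lemmacompclosureprops}.4, applied to the resulting comma $\infty$-categories over $\mathcal{D}$ (this is where the pullbacks in $\mathcal{D}$ enter), produces the right adjoint to $G^{\ast}\colon\mathrm{Rect}(J,F^{\ast}\mathcal{E})\rightarrow\mathrm{Rect}(I,F^{\ast}\mathcal{E})$. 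No slice adjunction $\Phi_C\dashv U_C$ and no reflection of representability are needed in that direction.
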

\begin{proof}
This follows from Lemma~\ref{lemmarepnattransfbasics}.3 by virtue of the natural equivalence
\[F^{\ast}(\mathrm{Cat}_{\infty}(\phv,\mathcal{E}))\simeq\mathrm{Cat}_{\infty}(\phv,F^{\ast}\mathcal{E})\]
simply given by associativity of pre- and postcomposition of functors.
\end{proof}

\begin{remark}
Lemma~\ref{lemmachangeofbasecomp} applies more generally to all comprehension schemes in the generality discussed in 
the introduction. In particular, it applies to all $(K,G)$-comprehension schemes for any $\infty$-category $K$ 
and any morphism $G$ in $K$. The same remark applies to Lemma~\ref{lemmacompclosureprops}.
\end{remark}

We will see applications of Lemma~\ref{lemmachangeofbasecomp} in the coming sections.

\section{Standard diagrammatic comprehension for cartesian fibrations}\label{secsubcartfib}

For various reasons it may be useful at times to express comprehension schemes for a given cartesian fibration directly without having to 
compute its straightening first. One reason to do so is that often examples of indexed $\infty$-categories are only implicitly given as the 
straightening of certain cartesian fibrations as the latter are much easier to construct in practice (take the 
canonical indexing of an $\infty$-category with pullbacks for instance as in the proof of 
Proposition~\ref{exmplelocsmall}). Another reason is that theories of fibrations exist in many different contexts and 
can easily be translated from one to another, while the theory of indexed $\infty$-categories 
does not do so to the same extent. For instance, a theory of cartesian fibrations exists in every $\infty$-cosmos as 
defined by Riehl and Verity \cite{riehlverityelements} and it is furthermore preserved along cosmological functors. 
This enables us to give a simple proof of independence of our results about comprehension from the specific choice of 
model of $(\infty,1)$-category theory, and even allows us to express the notion of comprehension in other
$\infty$-cosmoses (of not necessarily $(\infty,1)$-categories). \\

Therefore, recall the flat simplicially enriched model categories
$(\mathbf{S}^{+}_{/\mathcal{C}^{\sharp}},\mathrm{RFib}(\mathcal{C}))$ for right fibrations and
$(\mathbf{S}^{+}_{/\mathcal{C}^{\sharp}},\mathrm{Cart}(\mathcal{C}))$ for cartesian fibrations over $\mathcal{C}$ from Section~\ref{secpre}.
The associated full simplicial subcategories of fibrant objects yield $\infty$-cosmoses $\mathbf{RFib}(\mathcal{C})$ and
$\mathbf{Cart}(\mathcal{C})$ in the sense of \cite{riehlverityelements}, respectively.
The following definition introduces a pinched variation of Johnstone's rectangular diagram 
categories \cite[Section B1.3]{elephant} that he uses to define comprehension schemes of Grothendieck fibrations.

\begin{definition}\label{defrect}
Let $p\colon\mathcal{E}\twoheadrightarrow\mathcal{C}$ be a cartesian fibration. For a simplicial set $I\in\mathbf{S}$, let $p^I$ be the 
simplicial cotensor of $p$ with $I$ in the $\infty$-cosmos $\mathbf{Cart}(\mathcal{C})$. The right fibration of 
\emph{pinched rectangular $I$-indexed diagrams in $p$} is given by the core $(p^I)^{\times}\in\mathbf{RFib}(\mathcal{C})$ 
in the sense of Definition~\ref{defcore}. We will denote the domain of $(p^I)^{\times}$ by
$\llbracket I,\mathcal{E}\rrbracket$.
\end{definition}

The right fibration $(p^I)^{\times}\colon\llbracket I,\mathcal{E}\rrbracket\twoheadrightarrow\mathcal{C}$ can explicitly be described 
as the composite pullback
\begin{align}\label{diagdefrect}
\begin{gathered}
\xymatrix{
\llbracket I,\mathcal{E}\rrbracket\ar@{^(->}[r]\ar@{^(->}[d]\ar@{}[dr]|(.3){\pbs} & (\mathcal{E}^I)^{\times}\ar@{^(->}[d] \\
\Delta^{\ast}(\mathcal{E}^I)\ar@{->>}[d]_{p^I}\ar@{^(->}[r]\ar@{}[dr]|(.3){\pbs} & \mathcal{E}^I\ar@{->>}[d]^(.4){p^I}\\
\mathcal{C}\ar@{^(->}[r]_{\Delta}& \mathcal{C}^I
}
\end{gathered}
\end{align}
in the simplicial category $\mathbf{QCat}$ of quasi-categories. Here, the exponential on the right hand side is computed in $\mathbf{QCat}$ 
and the exponential on the left hand side is computed in $\mathbf{Cart}(\mathcal{C})$. Indeed, the right vertical map $p^I\colon\mathrm{Map}^{\flat}(I^{\flat},\mathcal{E}^{\natural})\twoheadrightarrow\mathrm{Map}^{\flat}(I^{\flat},\mathcal{C}^{\sharp})$ 
is a cartesian fibration by \cite[Remark 3.1.1.10, Proposition 3.1.2.3]{luriehtt}, and hence so is its pullback
$p^I\colon\Delta^{\ast}(\mathcal{E}^I)\twoheadrightarrow\mathcal{C}$. Its cartesian morphisms are exactly the 
morphisms in the wide $\infty$-subcategory $(\mathcal{E}^I)^{\times}=\mathrm{Map}^{\sharp}(I^{\flat},\mathcal{E}^{\natural})$ by
\cite[Proposition 3.1.2.1]{luriehtt}. It follows that the vertical composite map $(\mathcal{E}^I)^{\times}\twoheadrightarrow\mathcal{C}^I$ in 
(\ref{diagdefrect}) is a right fibration, and so the pullback $(p^I)^{\times}\colon\llbracket I,\mathcal{E}\rrbracket\rightarrow\mathcal{C}$ 
is a right fibration as well. In particular, $\llbracket I,\mathcal{E}\rrbracket=\Delta^{\ast}(\mathcal{E}^I)^{\times}$ is an
$\infty$-category. Both pullbacks in (\ref{diagdefrect}) are homotopy pullbacks in the 
Joyal model structure for quasi-categories (because, for instance, the pullback of a span in which all vertices are fibrant and one leg is a 
fibration yields a homotopy pullback in any model category). 

In less formal terms, $\llbracket I,\mathcal{E}\rrbracket\subseteq\text{Fun}(I,\mathcal{E})$ is the $\infty$-category 
of vertical diagrams and jointly horizontal natural transformations over
$p\colon\mathcal{E}\twoheadrightarrow\mathcal{C}$. Its objects are the vertical diagrams contained in the
$\infty$-category $\coprod_{C\in\mathcal{C}}\text{Fun}(I,\mathcal{E}(C))$, its morphisms between two 
vertical diagrams $X\colon I\rightarrow\mathcal{E}(C)$ and $Y\colon I\rightarrow \mathcal{E}(C\sprime)$ are the 
natural transformations $\alpha\colon X\rightarrow Y$ in $\mathcal{E}$ which are pointwise cartesian over one single morphism
$f_{\alpha}\colon C\rightarrow C\sprime$ in $\mathcal{C}$. By virtue of this single apex morphism in $\mathcal{C}$ 
they can be thought as pinched rectangular or triangular prismatic. 

\begin{remark}\label{remGcomp}
The reason for the departure from rectangular shaped diagrams is the correspondence in Proposition~\ref{propstrrect}.
Following \cite{elephant} on the nose, one would define $\text{Rect}(I,\mathcal{E})\subseteq(\mathcal{E}^I)^{\times}$ 
to be the full $\infty$-subcategory generated by the diagrams which are vertices in
$\mathbf{S}^{+}(I^{\sharp},(\mathcal{E},\{\text{vertical edges in }\mathcal{E}\}))$.
This notion of rectangular diagram comes with a less strict notion of both vertical diagram and horizontal natural 
transformation than pinched rectangular diagrams do. 
Yet, the two derived notions of $G$-comprehension for a map $G\colon I\rightarrow J$ of simplicial sets are equivalent whenever the 
codomain $J$ is connected and the $\infty$-category $\mathcal{C}$ has $\pi_0(I)$-sized products. This in fact holds in all of the basic 
examples which are considered both here and in \cite{elephant}. Further elaborations on this divergence will be omitted, however more details 
including a proof of this equivalence can be found in an earlier draft of this paper \cite[Remark 3.3]{rs_comp_v1}.
\end{remark}

The pinched rectangular diagram construction associated to a cartesian fibration is pullback-stable in the following sense.

\begin{lemma}\label{lemmarectpbstable}
Let $p\colon\mathcal{E}\twoheadrightarrow\mathcal{C}$ be a cartesian fibration, $I$ be a simplicial set and suppose
$F\colon\mathcal{D}\rightarrow\mathcal{C}$ is a functor of $\infty$-categories. Then the right fibrations
$((F^{\ast}p)^I)^{\times}\colon\llbracket I,F^{\ast}\mathcal{E}\rrbracket\twoheadrightarrow\mathcal{D}$ and
$F^{\ast}((p^I)^{\times})\colon F^{\ast}(\llbracket I,\mathcal{E}\rrbracket)\twoheadrightarrow\mathcal{D}$ are isomorphic.
Furthermore, for every map $G\colon I\rightarrow J$ of simplicial sets, the according functors $((F^{\ast}p)^{G})^{\times}$ and
$F^{\ast}((p^G)^{\times})$ are isomorphic as well.
\end{lemma}
\begin{proof}
The pullback functor $F^{\ast}\colon\mathbf{Cart}(\mathcal{C})\rightarrow\mathbf{Cart}(\mathcal{D})$ is cosmological and hence preserves 
simplicial cotensors naturally. It furthermore commutes with the core construction as for instance can be seen from the explicit 
description of $\mathcal{E}^{\times}\subseteq\mathcal{E}$ as the wide $\infty$-subcategory spanned by the $p$-cartesian morphisms in 
Section~\ref{secpre}. Indeed, the $F^{\ast}p$-cartesian morphisms in the pullback $F^{\ast}\mathcal{E}$ are exactly the morphisms mapped to a
$p$-cartesian morphism in $\mathcal{E}$.
\end{proof}

\begin{proposition}\label{propstrrect}
Let $p\colon\mathcal{E}\twoheadrightarrow\mathcal{C}$ be a cartesian fibration with essentially small fibers.
\begin{enumerate}
\item Let $I$ be a simplicial set. Then the right fibration
$(p^I)^{\times}\colon\llbracket I,\mathcal{E}\rrbracket\twoheadrightarrow\mathcal{C}$ is (equivalent to) the unstraightening of the presheaf
\begin{align}\label{equrectind}
\mathcal{C}^{op}\xrightarrow{\mathrm{St}(p)}\mathrm{Cat}_{\infty}\xrightarrow{(\cdot)^I}\mathrm{Cat}_{\infty}\xrightarrow{(\cdot)^{\simeq}}\mathcal{S}. 
\end{align}
\item Let $G\colon I\rightarrow J$ be a map of simplicial sets. Then the natural restriction functor
\begin{align}\label{equstrrect}
G^{\ast}\colon\llbracket J,\mathcal{E}\rrbracket\rightarrow\llbracket I,\mathcal{E}\rrbracket
\end{align}
over $\mathcal{C}$ is (equivalent to) the unstraightening of the natural transformation (\ref{equnotationGcompstrict}). In particular, the
$\mathcal{C}$-indexed $\infty$-category $\mathrm{St}(p)$ has $G$-comprehension if and only if the restriction (\ref{equstrrect}) has a (not necessarily fibered) right adjoint.
\end{enumerate}
\end{proposition}

\begin{proof}
For every $\mathcal{C}$-indexed $\infty$-category $\mathcal{E}\colon\mathcal{C}^{op}\rightarrow\mathrm{Cat}_{\infty}$ and every simplicial 
set $I$, there is a natural equivalence
\begin{align}\label{equpropstrrectproof}
\mathrm{Un}(\mathcal{E}^I)\simeq\mathrm{Un}(\mathcal{E})^I
\end{align}
of cartesian fibrations over $\mathcal{C}$. Here, the left hand side is (up to equivalence) the unstraightening of the simplicial cotensor of 
(a projectively fibrant model of) $\mathcal{E}$ with $I$ in the simplicial category
$\mathbf{Fun}(\mathfrak{C}(\mathcal{C})^{op},\mathbf{QCat})$. It is computed as the pointwise exponential of simplicial sets. The right hand 
side is the simplicial cotensor of the unstraightening $\mathrm{Un}(\mathcal{E})$ computed in $\mathbf{Cart}(\mathcal{C})$. The existence of 
the natural equivalence (\ref{equpropstrrectproof}) 
hence follows from the fact that the unstraightening construction preserves simplicial cotensors up to binatural equivalence. A direct proof 
of this can be found in \cite[Theorem 1.1]{rs_unstraight}, but it essentially follows from the fact that unstraightening induces a 
biequivalence of according $(\infty,2)$-categories as first explicitly shown in \cite[Lemma 1.4.3]{ghl2fib}. 

In particular, for every cartesian fibration $p\colon\mathcal{E}\twoheadrightarrow\mathcal{C}$ there is an equivalence
\[\mathrm{Un}(\mathrm{St}(p)^I)\simeq\mathrm{Un}(\mathrm{St}(p))^I\simeq p^I.\]
By definition of the core construction in Definition~\ref{defcore}, this induces an equivalence
\[\mathrm{Un}((\mathrm{St}(p)^I)^{\simeq})\simeq(p^I)^{\times}\]
as stated in Part 1. Part 2 follows from the same argument by virtue of binaturality of the equivalence (\ref{equpropstrrectproof}).
\end{proof}

\begin{notation}
Given Proposition~\ref{propstrrect}, we say that a cartesian fibration $p\colon\mathcal{E}\twoheadrightarrow\mathcal{C}$ has
$G$-comprehension for a map $G\colon I\rightarrow J$ of simplicial sets whenever the functor
$G^{\ast}\colon\llbracket J,\mathcal{E}\rrbracket\rightarrow\llbracket I,\mathcal{E}\rrbracket$ of $\infty$-categories has a right adjoint.
\end{notation}

For a given map $G\colon I\rightarrow J$ of simplicial sets, a cartesian fibration $p\colon\mathcal{E}\twoheadrightarrow\mathcal{C}$ and a 
vertical diagram $X\in\llbracket I,\mathcal{E}\rrbracket$, the associated comma $\infty$-category $G^{\ast}\downarrow X$ given by 
the pullback
\begin{align}\label{equrelrep1pb}
\begin{gathered}
\xymatrix{
G^{\ast}\downarrow X\ar@{->>}[d]\ar[r]\ar@{}[dr]|(.3){\pbs} & \llbracket I,\mathcal{E}\rrbracket_{/X}\ar@{->>}[d]^{s}\\
\llbracket J,\mathcal{E}\rrbracket\ar[r]_{G^{\ast}} & \llbracket I,\mathcal{E}\rrbracket
}
\end{gathered}
\end{align}
may be thought of as the right fibration of horizontal $J$-extensions of $X$. Thus, Proposition~\ref{propstrrect} allows us to characterize
$G$-comprehension directly in terms of these comma $\infty$-categories as follows.

\begin{proposition}\label{lemmabenabou}
Let $G\colon I\rightarrow J$ be a map of simplicial sets and $p\colon\mathcal{E}\twoheadrightarrow\mathcal{C}$ be a cartesian fibration.
The fibration $p$ has $G$-comprehension if and only if for every vertical diagram $X\in\llbracket I,\mathcal{E}\rrbracket$ the $\infty$-category
$G^{\ast}\downarrow X$ of vertical $J$-structures extending $X$ horizontally has a terminal object.
\end{proposition}
\begin{proof}
The right fibration $G^{\ast}\downarrow X\twoheadrightarrow\mathcal{C}$ is exactly the unstraightening of the pullback of
$G^{\ast}\colon(\mathrm{St}(p)^J)^{\simeq}\rightarrow(\mathrm{St}(p)^I)^{\simeq}$ along
$\ulcorner X\urcorner\colon y(p(X))\rightarrow(\mathrm{St}(p)^I)^{\simeq}$ in $\hat{\mathcal{C}}$.
\end{proof}

\begin{examples}\label{exmpleunivcomptransfer}
The following are translations of some of the examples in Section~\ref{seccomp} to their fibrational counterparts.
\begin{enumerate}
\item A cartesian fibration $p\colon\mathcal{E}\twoheadrightarrow\mathcal{C}$ over an $\infty$-category $\mathcal{C}$ with finite products is 
globally small if and only if its core $\mathcal{E}^{\times}$ has a terminal object (Example~\ref{exmplsmallness}).
\item The canonical fibration $t\colon\mathrm{Fun}(\Delta^1,\mathcal{C})\twoheadrightarrow\mathcal{C}$ associated to a small
$\infty$-category $\mathcal{C}$ with pullbacks is locally small if and only if $\mathcal{C}$ is locally cartesian closed 
(Proposition~\ref{exmplelocsmall}).
\item The representable right fibrations $\mathcal{C}_{/C}\twoheadrightarrow\mathcal{C}$ are locally small if and only if $\mathcal{C}$ has 
equalizers. They are always globally small whenever $\mathcal{C}$ has finite products (Example~\ref{remlocsmallrep}).
\item Any $\infty$-category $\mathcal{C}$ considered as a cartesian fibration over the point is globally small if and only if its core
$\mathcal{C}^{\times}\simeq\mathcal{C}^{\simeq}$ is contractible. It is locally small if and only if its hom-spaces are contractible 
(Example~\ref{exmplecntrblty}).
\item Given a cartesian fibration $p\colon\mathcal{E}\twoheadrightarrow\mathcal{C}$ and an $\infty$-category $\mathcal{D}$ with pullbacks 
together with a left adjoint functor $F\colon\mathcal{D}\rightarrow\mathcal{C}$, we have $\mathrm{Comp}(F^{\ast}p)\subseteq\mathrm{Comp}(p)$ 
(Lemma~\ref{lemmachangeofbasecomp}).
\item The universal cartesian fibration $\pi^{op}\colon\mathrm{Dat}_{\infty}^{op}\twoheadrightarrow\mathrm{Cat}_{\infty}^{op}$ has
$G$-comprehension for every functor $G\colon\mathcal{I}\rightarrow\mathcal{J}$ between small $\infty$-categories $\mathcal{I}$,
$\mathcal{J}$ by Example~\ref{exmplecompschemesidcat}. In particular, the left fibration
$(\pi^{I})^{\times}\colon\llbracket I,\mathrm{Dat}_{\infty}\rrbracket\twoheadrightarrow\mathrm{Cat}_{\infty}$ is equivalent to 
the corepresentable $(\mathrm{Cat}_{\infty})_{I/}\twoheadrightarrow\mathrm{Cat}_{\infty}$ for every quasi-category $I$. 
Furthermore, the universal left fibration $\mathcal{S}_{\ast}\twoheadrightarrow\mathcal{S}$ is the pullback of the universal 
cocartesian fibration $\pi\colon\mathrm{Dat}_{\infty}\twoheadrightarrow\mathrm{Cat}_{\infty}$ along the canonical 
inclusion $\mathcal{S}\hookrightarrow\mathrm{Cat}_{\infty}$ \cite[Section 3.3.2]{luriehtt}. This inclusion comes with a left
adjoint $F\colon\mathrm{Cat}_{\infty}\rightarrow\mathcal{S}$ which assigns to every $\infty$-category the free
$\infty$-groupoid generated by it. Taking opposites all over this pullback square, it follows from
Example~\ref{exmpleunivcomptransfer}.5 that the universal right fibration $\mathcal{S}_{\ast}^{op}\twoheadrightarrow\mathcal{S}^{op}$  
satisfies all comprehension schemes satisfied by the universal cartesian fibration $\pi^{op}$.
\end{enumerate}
\end{examples}

\section{Externalization of internal $(\infty,1)$-categories}\label{secext}

In this section we generalize the externalization construction of \cite{elephant} (and \cite[Section 7.2]{jacobsttbook}) to
$\infty$-category theory. Given an $\infty$-category $\mathcal{C}$ with pullbacks, we show that externalization yields an equivalence between 
internal $\infty$-categories in $\mathcal{C}$ and the globally and locally small indexed $\infty$-categories over $\mathcal{C}$.
As an example we will see that the universal cartesian fibration
$\pi^{op}\colon\mathrm{Dat}_{\infty}^{op}\twoheadrightarrow\mathrm{Cat}_{\infty}^{op}$ is the (unstraightening of the) externalization of the 
internal $\infty$-category $\Delta^{\bullet}$ in $\mathrm{Cat}_{\infty}^{op}$.

\begin{notation}
For an $\infty$-category $\mathcal{C}$, we denote the $\infty$-category $\mathrm{Fun}(N(\Delta^{op}),\mathcal{C})$ of 
simplicial objects in $\mathcal{C}$ by $s\mathcal{C}$. For $n\geq 0$ and a subset $J\subseteq [n]$ of 
cardinality $j$, we denote by $d^J\colon [j]\rightarrow [n]$ the according inclusion of linear orders with image $J$, and for a simplicial 
object $X\in s\mathcal{C}$, by $d_J\colon X_n\rightarrow X_{j}$ the according simplicial operator. 
\end{notation}

We recall the definition of internal $\infty$-categories in an $\infty$-category $\mathcal{C}$ with pullbacks from
\cite[Section 3]{rasekhcart}, going only into as much technical detail as necessary to define and study their externalization.

\begin{definition}\label{defSegalobjects}
A \emph{Segal object} in an $\infty$-category $\mathcal{C}$ with pullbacks is a simplicial object $X\in s\mathcal{C}$ such that its 
associated Segal morphisms
\[\xi_n\colon X_n\rightarrow X_1\times_{X_0}\dots\times_{X_0}X_1\]
are equivalences in $\mathcal{C}$ \cite[Definition 1.1.1]{lgood}.
\end{definition}

To every Segal object $X$ in $\mathcal{C}$ (in fact to every $X\in s\mathcal{C}$) we may associate, first, the object
$\mathrm{Zig}\text{-}\mathrm{zag}(X)\simeq X_1\prescript{d_1}{}{\times}_{X_0}^{d_1}X_1\prescript{d_0}{}{\times}_{X_0}^{d_0}X_1$ of zig-zags 
in $X$, and second, the object $\mathrm{Equiv}(X)\subseteq X_3$ of internal equivalences in $X$ (or more precisely the object of edges
together with a left and a right inverse in $X$) defined as the pullback
\[\xymatrix{
\mathrm{Equiv}(X)\ar[r]\ar[d]\ar@{}[dr]|(.3){\pbs} & X_3\ar[d]^{(d_{\{0,2\}},d_{\{1,2\}},d_{\{1,3\}})} \\
X_1\ar[r]_(.35){(s_0d_0,1,s_0d_1)} & \mathrm{Zig}\text{-}\mathrm{zag}(X).
}\]

\begin{definition}\label{defcompleteness}
A Segal object $X$ in $\mathcal{C}$ is \emph{complete} if the factorized degeneracy
\[\xymatrix{
 & \mathrm{Equiv}(X)\ar[d]\\
X_0\ar[r]_{s_0}\ar@/^/[ur]^{s_0} & X_1
}\]
induced by the degenerated 3-simplex $X(!_{[3]})\colon X_0\rightarrow X_3$ is an equivalence in $\mathcal{C}$ \cite[Definition 3.3]{rasekhcart}. 
\end{definition}

The definition of completeness in Definition~\ref{defSegalobjects} is directly derived from Rezk's original definition 
of completeness of Segal spaces in \cite{rezk}. As complete Segal spaces are exactly the internal $\infty$-categories in the
$\infty$-category $\mathcal{S}$ of spaces, the complete Segal objects in an $\infty$-category $\mathcal{C}$ with pullbacks amount exactly to 
the internal $\infty$-categories in $\mathcal{C}$.

We obtain full $\infty$-subcategories $\mathrm{CS}(\mathcal{C})\subseteq\mathrm{S}(\mathcal{C})\subseteq s\mathcal{C}$ of Segal objects and 
complete Segal objects in $\mathcal{C}$, respectively.

\begin{definition}
A \emph{Segal groupoid} in an $\infty$-category $\mathcal{C}$ with pullbacks is a Segal object $X$ in $\mathcal{C}$ such that the natural 
morphism $d_{\{1,2\}}\colon\mathrm{Equiv}(X)\rightarrow X_1$ is an equivalence in $\mathcal{C}$. The full $\infty$-subcategory of Segal 
groupoids in $\mathrm{S}(\mathcal{C})$ is denoted by $\mathrm{SGpd}(\mathcal{C})$.
\end{definition}

\begin{remark}\label{remseglgrpd}
This definition of a Segal groupoid is equivalent to Lurie's definition of a groupoid object in an $\infty$-category $\mathcal{C}$ with 
pullbacks as given in \cite[Definition 6.1.2.7]{luriehtt}. This can be shown using \cite[Proposition 6.2.1.6.(3)]{luriehtt} and
\cite[Section 3]{rs_bspaces}.
Furthermore, in \cite[Proposition 1.1.14]{lgood}, Lurie constructs the core $X^{\simeq}$ of a Segal object $X$ in
$\mathcal{C}$ whenever $\mathcal{C}$ is left exact. That is a right adjoint
$(\cdot)^{\simeq}\colon\mathrm{S}(\mathcal{C})\rightarrow \mathrm{SGpd}(\mathcal{C})$ to the inclusion of Segal 
groupoids into Segal objects in $\mathcal{C}$. He then defines a Segal object $X$ to be complete if the degeneracy 
$s_0\colon X^{\simeq}_0\rightarrow X^{\simeq}_1$ is an equivalence in $\mathcal{C}$. That is, in Lurie's 
terms, if the groupoid object $X^{\simeq}$ is constant \cite[Remark 1.1.5]{lgood}. This definition is equivalent 
to the one given in Definition~\ref{defSegalobjects} essentially by \cite[Proposition 1.1.13.(2)]{lgood}.\\
\end{remark}

The externalization of a complete Segal object in an $\infty$-category $\mathcal{C}$ with pullbacks can be given as follows. The Yoneda 
embedding $y\colon\mathcal{C}\rightarrow\hat{\mathcal{C}}$ induces 
a functor $sy\colon s\mathcal{C}\rightarrow s\hat{\mathcal{C}}$ by postcomposition. The $\infty$-category $s\hat{\mathcal{C}}$ of simplicial 
objects in turn is equivalent to the $\infty$-category $\mathrm{Fun}(\mathcal{C}^{op},s\mathcal{S})$ of $\mathcal{C}$-indexed simplicial 
spaces simply by virtue of Currying. As the Yoneda embedding is left exact, it restricts to a functor
$sy\colon\mathrm{S}(\mathcal{C})\rightarrow\mathrm{S}(\hat{\mathcal{C}})$ and further to a functor
$sy\colon\mathrm{CS}(\mathcal{C})\rightarrow\mathrm{CS}(\hat{\mathcal{C}})$.
The equivalence $s\hat{\mathcal{C}}\simeq\mathrm{Fun}(\mathcal{C}^{op},s\mathcal{S})$ also restricts to 
equivalences $\mathrm{S}(\hat{\mathcal{C}})\simeq\mathrm{Fun}(\mathcal{C}^{op},\mathrm{S}(\mathcal{S}))$ and
$\mathrm{CS}(\hat{\mathcal{C}})\simeq\mathrm{Fun}(\mathcal{C}^{op},\mathrm{CS}(\mathcal{S}))$ between (complete) Segal objects 
in $\hat{\mathcal{C}}$ and $\mathcal{C}$-indexed (complete) Segal spaces, respectively.
Lastly, the $\infty$-category $\mathrm{CS}(\mathcal{S})$ of complete Segal spaces exhibits an equivalence to the $\infty$-category
$\mathrm{Cat}_{\infty}$ of (small) $\infty$-categories given by the right derived horizontal projection 
$\text{Ho}_{\infty}(p_h)\colon\mathrm{CS}(\mathcal{S})\xrightarrow{\simeq}\mathrm{Cat}_{\infty}$ \cite[Section 4]{jtqcatvsss}.

\begin{definition}\label{defext}
The \emph{externalization functor}
$\mathrm{Ext}\colon\mathrm{CS}(\mathcal{C})\rightarrow\mathrm{Fun}(\mathcal{C}^{op},\mathrm{Cat}_{\infty})$ 
associated to an $\infty$-category $\mathcal{C}$ with pullbacks is given by the composition
\[\mathrm{CS}(\mathcal{C})\xrightarrow{sy}\mathrm{Fun}(\mathcal{C}^{op},\mathrm{CS}(\mathcal{S}))\xrightarrow{\simeq}\mathrm{Fun}(\mathcal{C}^{op},\mathrm{Cat}_{\infty}).\]
\end{definition}

The externalization functor associated to an $\infty$-category $\mathcal{C}$ with pullbacks extends the Yoneda embedding associated to
$\mathcal{C}$ in the sense that the following diagram can be shown to commute in both directions.

\begin{align}\label{diagyonedaext}
\begin{gathered}
\xymatrix{
\mathcal{C}\ar@{^(->}[r]^(.35)y\ar@/_/@{^(->}[d] & \mathrm{Fun}(\mathcal{C}^{op},\mathcal{S})\ar@/_/@{^(->}[d] \\
\mathrm{CS}(\mathcal{C})\ar[r]_(.35){\mathrm{Ext}}\ar@/_/[u]_{(\cdot)_0} & \mathrm{Fun}(\mathcal{C}^{op},\mathrm{Cat}_{\infty})\ar@/_/[u]_{(\cdot)^{\simeq}}
}
\end{gathered}
\end{align}
Here, the left vertical inclusion assigns to an object $C\in\mathcal{C}$ the constant simplicial object in $\mathcal{C}$ with value
$C$, and the right vertical inclusion is the pushforward with the canonical inclusion 
$\mathcal{S}\hookrightarrow\mathrm{Cat}_{\infty}$.

For a given complete Segal object $X$ in $\mathcal{C}$, we denote the corresponding Unstraightened cartesian fibration by
$\mathrm{Ext}(X)\twoheadrightarrow\mathcal{C}$ as well. By construction, its fibers $\mathrm{Ext}(X)(C)$ are naturally 
equivalent to the composition $\text{Ho}_{\infty}(p_h)(sy(X)(C))$. They can be described as the
$\infty$-categories given by the $0$-th row of any Reedy fibrant representative of the complete Segal object
$sy(X)(C)\simeq\mathcal{C}(C,X_{(\cdot)})$ in the category $s\mathbf{S}$ of simplicial spaces.

\begin{definition}\label{defsmallness}
A $\mathcal{C}$-indexed $\infty$-category $\mathcal{E}$ is \emph{small} if there is a complete Segal object $X\in\mathrm{CS}(\mathcal{C})$ 
such that $\mathrm{Ext}(X)\simeq\mathcal{E}$.
\end{definition}

\begin{remark}\label{remmatchrasekh}
Under the equivalence of models of $(\infty,1)$-category theory induced by the Quillen equivalence
\[p_h\colon(s\mathbf{S},\mathrm{CS})\rightarrow(\mathbf{S},\mathrm{QCat})\]
of the Rezk model structure for complete Segal spaces on the one hand, and the Joyal model structure for quasi-categories on the other hand, 
the cartesian fibrations of the form $\mathrm{Ext}(X)\twoheadrightarrow\mathcal{C}$ for a 
complete Segal object $X$ in $\mathcal{C}$ are exactly the ``representable'' cartesian fibrations
$\mathcal{C}_{/X}\twoheadrightarrow\mathcal{C}$ in \cite[Definition 2.1]{rasekhcart} by \cite[Notation 2.5]{rasekhcart}. Note however that 
these are not the cartesian fibrations given by the $\infty$-overcategories $\mathcal{C}_{/X}\twoheadrightarrow\mathcal{C}$ as 
defined in \cite[Proposition 1.2.9.2]{luriehtt}.
In fact,	 whenever a cartesian fibration $p\colon\mathcal{E}\twoheadrightarrow\mathcal{C}$ is presented by a ``Reedy right fibration''
$\mathcal{R}\twoheadrightarrow\mathcal{C}$ \cite[Definition 4.2, Definition 2.2]{rasekhcartfibs}, then the right 
fibration $\mathcal{R}_n\twoheadrightarrow\mathcal{C}$ in $(s\mathbf{S},\mathrm{CS})$ translates exactly to the right 
fibration $\llbracket\Delta^n,\mathcal{E}\rrbracket\twoheadrightarrow\mathcal{C}$ in $(\mathbf{S},\mathrm{QCat})$ via the 
Quillen equivalence $p_h$ for every $n\geq 0$.

\end{remark}

\begin{remark}\label{remextgensegal}
Externalization can also be defined more generally for not necessarily complete Segal spaces (and even for general simplicial objects) in an
$\infty$-category $\mathcal{C}$, using that $\mathcal{C}$-indexed Segal spaces can be completed pointwise. More precisely, the inclusions
$\mathrm{CS}(\mathcal{S)}\hookrightarrow\mathrm{S}(\mathcal{S})\hookrightarrow s\mathcal{S}$ have a left adjoint each (given 
by the fact that the two inclusions are presented by left Bousfield localizations of the respective model structures). The left adjoint
$\rho\colon\mathrm{S}(\mathcal{S})\rightarrow\mathrm{CS}(\mathcal{S})$ maps a Segal space to its completion. We 
thus obtain a generalized externalization functor
\[\mathrm{S}(\mathcal{C})\xrightarrow{sy}\mathrm{Fun}(\mathcal{C}^{op},\mathrm{S}(\mathcal{S}))\xrightarrow{\rho_{\ast}}\mathrm{Fun}(\mathcal{C}^{op},\mathrm{CS}(\mathcal{S}))\xrightarrow{\simeq}\mathrm{Fun}(\mathcal{C}^{op},\mathrm{Cat}_{\infty})\]
which restricts on $\mathrm{CS}(\mathcal{C})$ (up to equivalence) exactly to the externalization functor from Definition~\ref{defext}.
Note that the definition of a Segal space in an $\infty$-category $\mathcal{C}$ does not require the existence of pullbacks in $\mathcal{C}$, 
as the Segal conditions only express that a certain specified cone in $\mathcal{C}$ is limiting.
\end{remark}

Suppose for the moment that $\mathcal{C}$ is a 1-category (with pullbacks). Then $\mathrm{S}(\mathcal{C})$ is the 1-category of 
internal categories in $\mathcal{C}$, and $\mathrm{CS}(\mathcal{C})$ is the 1-category of internal categories with a discrete core 
(Remark~\ref{remseglgrpd}).
The 1-categorical externalization functor as described in \cite[Section B.2.3]{elephant} and \cite[Section 7.3]{jacobsttbook} is a functor
\begin{align}\label{equdefext1}
\mathrm{Ext}_1\colon \mathrm{S}(\mathcal{C})\rightarrow\mathrm{Fun}(\mathcal{C}^{op},\mathrm{Cat}).
\end{align}
For an internal category $X\in\mathrm{S}(\mathcal{C})$ and an object $C\in\mathcal{C}$, the objects of the categories $\mathrm{Ext}_1(X)(C)$ 
are the morphisms
$\mathcal{C}(C,X_0)$. The morphisms between vertices $x,y\colon C\rightarrow X_0$ in $\mathrm{Ext}_1(X)(C)$ are explicitly given by
morphisms $f\colon C\rightarrow X_1$ such that $d_1f=x$ and $d_0f=y$. Given a morphism $f\colon C\rightarrow D$ in
$\mathcal{C}$, we obtain the functorial action $f^{\ast}\colon \mathrm{Ext}_1(X)(D)\rightarrow\mathrm{Ext}_1(X)(C)$ by precomposition 
with $f$. 

Although internal categories in a 1-category $\mathcal{C}$ are generally not complete, they automatically are internal Segal
\emph{categories}. 
Indeed, the Yoneda embedding of $\mathcal{C}$ factors through the $\infty$-category of discrete spaces, and so the functor
$sy\colon s\mathcal{C}\rightarrow s\mathcal{S}$ factors through the $\infty$-category $\mathrm{PCat}$ of precategories
\cite[Section 5]{jtqcatvsss} -- that is, the $\infty$-category of simplicial spaces $X$ such that $X_0$ is a discrete space. In fact, for 
any given internal category $X$ in $\mathcal{C}$ and any given object $C\in\mathcal{C}$, every column of the simplicial space
$sy(X)(C)=\mathcal{C}(C,X_{(\cdot)})$ is discrete. It follows in particular that the simplicial spaces $sy(X)(C)$ are Reedy fibrant. 
If we denote the $\infty$-category of Segal categories -- i.e.\ of Segal spaces $X$ such that $X_0$ is a discrete space -- by
$\mathrm{SC}(\mathcal{S})$, then the right derived horizontal projection
$\mathrm{Ho}_{\infty}(p_h)\colon\mathrm{SC}(\mathcal{S})\rightarrow\mathrm{Cat}_{\infty}$ is again an equivalence
\cite[Theorem 5.6]{jtqcatvsss}. Since every object in the image of $sy\colon\mathrm{S}(\mathcal{C})\rightarrow\mathrm{PCat}$ is already 
fibrant, it follows that the induced composition
\[\mathrm{S}(\mathcal{C})\xrightarrow{sy}\mathrm{Fun}(\mathcal{C}^{op},\mathrm{SC}(\mathcal{S}))\xrightarrow{\simeq}\mathrm{Fun}(\mathcal{C}^{op},\mathrm{Cat}_{\infty})\]
computes exactly the 1-categorical externalization (\ref{equdefext1}).

\begin{proposition}\label{prop1ext}
Let $\mathcal{C}$ be a 1-category. Then the externalization functor $\mathrm{Ext}\colon\mathrm{S}(\mathcal{C})\rightarrow\mathrm{Fun}(\mathcal{C}^{op},\mathrm{Cat}_{\infty})$ from Remark~\ref{remextgensegal} and the 1-categorical externalization functor
$\mathrm{Ext}_1\colon\mathrm{S}(\mathcal{C})\rightarrow\mathrm{Fun}(\mathcal{C}^{op},\mathrm{Cat}_{\infty})$ are naturally equivalent.
\end{proposition}
\begin{proof}
The inclusion $\iota\colon\mathrm{PCat}\rightarrow s\mathbf{S}$ of precategories in simplicial spaces is part of a left Quillen equivalence
$\iota\colon(\mathrm{PCat},\mathrm{SC})\rightarrow(s\mathbf{S},\mathrm{CS})$ between the Hirschowitz-Simpson model structure for Segal 
categories and the Rezk model structure for complete Segal spaces by \cite[Section 6]{bergner3models}. The induced (outer) triangle of 
equivalences
\[\xymatrix{
\mathrm{SC}(\mathcal{S})\ar[dr]^{\mathrm{Ho}_{\infty}(p_h)}\ar@/_2pc/[dd]_{\mathrm{Ho}_{\infty}(\iota)}\ar@{^(->}[d] & \\
\mathrm{S}(\mathcal{C})\ar[d]^{\rho} & \mathrm{Cat}_{\infty} \\
\mathrm{CS}(\mathcal{S})\ar[ur]_{\mathrm{Ho}_{\infty}(p_h)} & 
}\]
commutes up to equivalence again by \cite[Section 5]{jtqcatvsss}. Precomposition of this natural equivalence with the functor
$sy\colon \mathrm{S}(\mathcal{C})\rightarrow\mathrm{SC}(\mathcal{S})$ hence induces a natural 
equivalence between $\mathrm{Ext}_1$ and $\mathrm{Ext}$ as stated.
\end{proof}

Thus, the $\infty$-categorical externalization functor is a natural generalization of its 1-categorical analogue.

\begin{proposition}\label{remextgrpdchar}
A Segal object $X$ in an $\infty$-category $\mathcal{C}$ with pullbacks is a Segal groupoid if and only if its externalization is an indexed
$\infty$-groupoid. 
\end{proposition}
\begin{proof}
Let $X$ be a Segal object in $\mathcal{C}$. The Yoneda embedding is conservative and so the canonical morphism
$\mathrm{Equiv}(X)\rightarrow X_1$ is an equivalence in $\mathcal{C}$ if and only if for every object $C\in\mathcal{C}$, the presheaf
$sy(X(C))$ is a Segal space such that $\mathcal{C}(C,\mathrm{Equiv}(X))\rightarrow\mathcal{C}(C,X_1)$ is an equivalence of spaces. But each 
of these maps is naturally equivalent to the canonical map $\mathrm{Equiv}(sy(X)(C))\rightarrow(sy(X)(C))_1$ associated to the Segal space 
$sy(X)(C)$. Hence, $X$ is a Segal groupoid in $\mathcal{C}$ if and only if each $sy(X)(C)$ is a Segal groupoid in $\mathcal{S}$. By an 
application of \cite[Proposition 6.1.2.6.(3)]{luriehtt}, such are exactly the \emph{Bousfield-Segal spaces} in terms of
\cite[Corollary 5.4]{rs_bspaces}. Now, a Segal space is a Bousfield-Segal space if and only if its right derived horizontal projection
is a Kan complex by \cite[Remark 5.5]{rs_bspaces}. In summary, it follows that a Segal object $X$ in $\mathcal{C}$ is a Segal object if and 
only if for all $C\in\mathcal{C}$, the quasi-category $\mathrm{Ext}(X)(C)=\mathrm{Ho}_{\infty}(p_h)(sy(X)(C))$ is a Kan complex.
\end{proof}

For the following, recall that Joyal and Tierney constructed in fact two Quillen equivalences between the model structure for complete Segal 
spaces and the model structure for quasi-categories. On the one hand, we have already considered the right Quillen functor
\[p_h\colon(s\mathbf{S},\mathrm{CS})\rightarrow(\mathbf{S},\mathrm{QCat}).\]
On the other hand, in \cite[Section 4]{jtqcatvsss} the authors construct a left Quillen functor
\[t^!\colon(\mathbf{S},\mathrm{QCat})\rightarrow(s\mathbf{S},\mathrm{CS})\]
which is part of a Quillen equivalence as well. If by $I\Delta^n\in\mathbf{S}$ we denote the nerve of the free groupoid 
on $[n]$, the functor $t^!$ is given by the formula
\begin{align*}
t^!(J)_{mn} & =\mathbf{S}(\Delta^m\times I[\Delta^n],J) \\
 & =((J^{\Delta^m})^{\simeq})_n
\end{align*}
for simplicial sets $J$. In particular, for every quasi-category $\mathcal{C}$ we have
\[t^!(\mathcal{C})=\mathrm{Cat}_{\infty}(\Delta^{\bullet},\mathcal{C})=sy(\Delta^{\bullet})(\mathcal{C})\]
for $\Delta^{\bullet}$ considered as a simplicial object in $\mathrm{Cat}_{\infty}^{op}$.
In the proof of \cite[Theorem 4.12]{jtqcatvsss}
the authors show that the composition $p_h\circ t^!\colon\mathbf{S}\rightarrow\mathbf{S}$ is isomorphic to the identity, 
and so it follows that the composition
\[\mathrm{Ho}_{\infty}(p_h)\circ\mathrm{Cat}_{\infty}(\Delta^{\bullet},\phv)\colon\mathrm{Cat}_{\infty}\rightarrow\mathrm{Cat}_{\infty}\]
is naturally equivalent to the identity on $\mathrm{Cat}_{\infty}$.

\begin{example}\label{expleextqcat}
Via the equivalence $t^!\colon\mathrm{Cat}_{\infty}\rightarrow\mathrm{CS}(\mathcal{S})$ we can define the externalization of a quasi-category
$\mathcal{C}$ as $\mathrm{Ext}(t^!(\mathcal{C}))$. The fact that the composition
$\mathrm{Ho}_{\infty}(p_h)\circ t^!\colon\mathrm{Cat}_{\infty}\rightarrow\mathrm{Cat}_{\infty}$ recovers the identity up to equivalence 
induces for any given quasi-category $\mathcal{C}$ a natural equivalence
$\mathrm{Ext}(\mathcal{C})\simeq\mathrm{Fun}(\phv,\mathcal{C})$ in $\mathrm{Fun}(\mathbf{S}^{op},\mathrm{Cat}_{\infty})$.
This means that the externalization of an $\infty$-category considered as an internal category in spaces is just the naive indexing of 
$\mathcal{C}$ from Example~\ref{exmplebasicindcats}.2.
\end{example}

\begin{example}[The universal complete Segal object]\label{exmpleunivextpre}
The Yoneda embedding $y\colon\Delta\rightarrow\mathbf{S}$ yields a simplicial object
$\Delta^{\bullet}$  in $\mathrm{Cat}_{\infty}^{op}$ which is a complete Segal object precisely because the spine 
inclusions $S_n\rightarrow\Delta^n$ (i.e.\ the inclusions of the maximal chain of non-degenerate 1-simplices into
$\Delta^n$) and the endpoint inclusion $\Delta^0\rightarrow I\Delta^1$ are weak categorical equivalences in the Joyal 
model structure. We therefore can construct the externalization
\begin{align}\label{equexmpleunivextpre}
\mathrm{Ext}(\Delta^{\bullet})\colon\mathrm{Cat}_{\infty}\rightarrow\mathrm{Cat}_{\infty}
\end{align}
given exactly by $\mathrm{Ho}_{\infty}(p_h)\circ\mathrm{Cat}_{\infty}(\Delta^{\bullet},\phv)$ via Definition~\ref{defext}. By the above, 
it follows that the given externalization is naturally equivalent to the identity on
$\mathrm{Cat}_{\infty}$. In other words, the universal cocartesian fibration
$\pi\colon\mathrm{Dat}_{\infty}\rightarrow\mathrm{Cat}_{\infty}$ -- defined as the unstraightening of the identity on
$\mathrm{Cat}_{\infty}$ -- is equivalent to the externalization of $\Delta^{\bullet}\in\mathrm{Cat}_{\infty}^{op}$. In this sense, it 
is represented by the co-complete co-Segal object $\Delta^{\bullet}$ in $\mathrm{Cat}_{\infty}$.

This can be understood as the rather plain and by now folklore fact that the model category $(\mathbf{S},\mathrm{QCat})$ together with the 
cosimplicial object $\Delta^{\bullet}$ in $\mathbf{S}$ is a ``theory of $(\infty,1)$-categories'' in the sense of To\"{e}n 
\cite{toeninftycats}. Indeed, $\Delta^{\bullet}$ being a complete Segal object in $\mathrm{Cat}_{\infty}^{op}$ means dually that
$\Delta^{\bullet}$ considered as a ``weak co-category object'' in $\mathrm{Cat}_{\infty}$ is an interval 
\cite[Definition 3.4]{toeninftycats}. The functor
$t^!=sy(\Delta^{\bullet})\colon(\mathbf{S},\mathrm{QCat})\rightarrow(s\mathbf{S},\mathrm{CS})$ is exactly the comparison functor in
\cite[Theorem 5.1]{toeninftycats}.
\end{example}

Under the equivalence of models of $(\infty,1)$-category theory alluded to in Remark~\ref{remmatchrasekh}, the following proposition is 
equivalent to a corresponding sequence of results in \cite[Section 4.1]{rasekhcart}.

\begin{proposition}\label{lemmaextrect}
Let $\mathcal{C}$ be an $\infty$-category with pullbacks.
\begin{enumerate}
\item Let $X$ be a complete Segal object in $\mathcal{C}$. Then for every $n\geq 0$, the presheaf
$(\mathrm{Ext}(X)^{\Delta^n})^{\simeq}$ over $\mathcal{C}$ is represented by $X_n\in\mathcal{C}$.
In particular, the presheaf $\mathrm{Ext}(X)^{\simeq}\in\hat{\mathcal{C}}$ is represented by $X_0\in\mathcal{C}$.
\item Let $\mathcal{E}$ be a $\mathcal{C}$-indexed $\infty$-category such that the presheaves
$\mathcal{E}^{\simeq}$ and $(\mathcal{E}^{\Delta^1})^{\simeq}$ are represented each by some object $E_0,E_1\in\mathcal{C}$, respectively.
Then there is a complete Segal object $X$ in $\mathcal{C}$ such that $X_i\simeq E_i$ for $i=0,1$, and such that
$\mathcal{E}\simeq\mathrm{Ext}(X)$. In particular, $\mathcal{E}$ is small. 
\end{enumerate}
\end{proposition}

\begin{proof}
For Part 1 we want to show that for all $X\in\mathrm{CS}(\mathcal{C})$ and all $n\geq 0$ the composite
\begin{align*}
\xymatrix{
\mathcal{C}^{op}\ar[r]^{\mathrm{Ext}(X)} & \mathrm{Cat}_{\infty}\ar[rr]^{\mathrm{Fun}(\Delta^n,\phv)}\ar@/_1pc/[rrr]_{\mathrm{Cat}_{\infty}(\Delta^n,\phv)} & & \mathrm{Cat}_{\infty}\ar[r]^(.65){(\cdot)^{\simeq}} & \mathcal{S}
}
\end{align*}
is represented by $X_n$. By definition of the externalization functor, it therefore suffices to show that the following diagram of
$\infty$-categories commutes up to equivalence.
\begin{align}\label{diaglemmaextrect}
\begin{gathered}
\xymatrix{
\mathcal{C}^{op}\ar@/_1pc/[drr]_{y\circ X}\ar[r]^(.5){sy(X)} & \mathrm{CS}(\mathcal{S})\ar[r]^(.55){\mathrm{Ho}_{\infty}(p_h)}\ar[dr]^{\mathrm{id}} & \mathrm{Cat}_{\infty}\ar[d]^{\mathrm{Cat}_{\infty}(\Delta^{\bullet},\phv)} \\
 & & \mathrm{CS}(\mathcal{S})
}
\end{gathered}
\end{align}
Towards homotopy-commutativity of the top right triangle, we have seen that both non-identity components of (\ref{diaglemmaextrect}) are 
equivalences, and that their converse composition is equivalent to the identity. It follows that they are mutually inverse to each other, so 
that the composition
\begin{align*}
\mathrm{Cat}_{\infty}(\Delta^{\bullet},\phv)\circ\mathrm{Ho}_{\infty}(p_h)\colon\mathrm{CS}(\mathcal{S})\rightarrow\mathrm{CS}(\mathcal{S})
\end{align*}
is equivalent to the identity on $\mathrm{CS}(\mathcal{S})$ as well. The bottom left triangle of (\ref{diaglemmaextrect}) commutes trivially 
as $sy(X):=y_{\ast}X:=y\circ X$ is the functorial pushforward by definition. 

For Part 2, consider the composition
\[\mathcal{C}^{op}\xrightarrow{\mathcal{E}}\mathrm{Cat}_{\infty}\xrightarrow{\mathrm{Cat}_{\infty}(\Delta^{\bullet},\phv)}\mathrm{CS}(\mathcal{S})\hookrightarrow s\mathcal{S}.\]
Given the canonical equivalence $\mathrm{Fun}(\mathcal{C}^{op},s\mathcal{S})\simeq s\hat{\mathcal{C}}$ and its restriction
$\mathrm{Fun}(\mathcal{C}^{op},\mathrm{CS}(\mathcal{S}))\simeq \mathrm{CS}(\hat{\mathcal{C}})$, we can consider 
the composition $\mathrm{Cat}_{\infty}(\Delta^{\bullet},\phv)\circ\mathcal{E}=(\mathcal{E}^{\Delta^{\bullet}})^{\simeq}$ as a complete 
Segal object in $\hat{\mathcal{C}}$. We want to show that the simplicial object
$(\mathcal{E}^{\Delta^{\bullet}})^{\simeq}\in s\hat{\mathcal{C}}$ is contained in the essential image of
\[sy\colon s\mathcal{C}\rightarrow s\hat{\mathcal{C}}.\]
Indeed, if we obtain a simplicial object $X\in s\mathcal{C}$ such that
$sy(X)\simeq(\mathcal{E}^{\Delta^{\bullet}})^{\simeq}\in s\hat{\mathcal{C}}$, then $X$ is automatically a complete Segal object in
$\mathcal{C}$. That is, because the Yoneda embedding preserves and reflects both equivalences and all limits that exist in $\mathcal{C}$, 
and so the square
\[\xymatrix{
\mathrm{CS}(\mathcal{C})\ar[r]^{sy}\ar@{^(->}[d] & \mathrm{CS}(\hat{\mathcal{C}})\ar@{^(->}[d]  \\
s\mathcal{C}\ar[r]_{sy} & s\hat{\mathcal{C}}
}\]
is cartesian. Furthermore, in this case we obtain an equivalence
$\mathrm{Ho}_{\infty}(p_h)\circ sy(X)\simeq\mathrm{Ho}_{\infty}(p_h)\circ(\mathrm{Cat}_{\infty}(\Delta^{\bullet},\phv)\circ\mathcal{E})$, 
where the left hand side is $\mathrm{Ext}(X)$ by definition, and the right hand side is equivalent to $\mathcal{E}$ itself. Thus, to prove 
the statement we are left to construct a simplicial object $X$ in $\mathcal{C}$ such that
$sy(X)\simeq(\mathcal{E}^{\Delta^{\bullet}})^{\simeq}$ in $s\hat{\mathcal{C}}$. Therefore in turn, it suffices to show that for all
$n\geq 0$ there is an object $X_n\in\mathcal{C}$ such that $X_n$ represents the presheaf $(\mathcal{E}^{\Delta^n})^{\simeq}$. That is, 
because any such collection of objects $(X_n)_{n\geq 0}$ in $\mathcal{C}$ together with equivalences
$e_n\colon y(X_n)\xrightarrow{\simeq}(\mathcal{E}^{\Delta^n})^{\simeq}$ in $\hat{\mathcal{C}}$ induces a homotopy-commutative square of the 
form
\[\xymatrix{
N(\Delta^{op})_0\ar[r]^(.6){(X_n)_{n\geq 0}}\ar@{^(->}[d] & \mathcal{C}\ar[d]^y \\
N(\Delta^{op})\ar[r]_(.6){(\mathcal{E}^{\Delta^{\bullet}})^{\simeq}} & \hat{\mathcal{C}}.
}\]
Here, the left vertical functor is the canonical inclusion of the underlying set of $N(\Delta^{op})$ into $N(\Delta^{op})$. But the class of 
essentially surjective functors is left orthogonal to the class of fully faithful functors in $\mathrm{Cat}_{\infty}$, and the Yoneda 
embedding is fully faithful. We thus obtain a lift
$X\colon N(\Delta)^{op}\rightarrow\mathcal{C}$ to this square (such that both resulting 
triangles commute up to equivalence) which is precisely a simplicial object as required.

And indeed, such objects $X_n=E_n$ exist for $n=0,1$ by assumption. Since $\mathcal{C}$ has pullbacks, for $n\geq 2$ we may simply 
define $X_n:=X_1\times_{X_0}\dots\times_{X_0}X_1$, so that $y(X_n)\simeq(\mathcal{E}^{\Delta^n})^{\simeq}$ by virtue of the fact that
$(\mathcal{E}^{\Delta^{\bullet}})^{\simeq}$ is a Segal object in $\hat{\mathcal{C}}$.
\end{proof}

Generally, the existence of an internal presentation of a given $\mathcal{C}$-indexed $\infty$-category can be captured 
via $(\infty,1)$-comprehension in the following way.

\begin{theorem}\label{thmsmall=ext}
Let $\mathcal{C}$ be an $\infty$-category with finite limits and $\mathcal{E}$ be a $\mathcal{C}$-indexed $\infty$-category. Then
$\mathcal{E}$ is small if and only if it is both globally small and locally small.
\end{theorem}

\begin{proof}
The indexed $\infty$-category $\mathcal{E}$ is both globally and locally small if and only if it has both
$(\emptyset\rightarrow\Delta^0)$- and $(\emptyset\rightarrow\Delta^1)$-comprehension by Corollary~\ref{cormonocomp}.
By Example~\ref{explerepslenattransfoverpt}, that is if and only if both the presheaves $\mathcal{E}^{\simeq}$ and
$(\mathcal{E}^{\Delta^1})^{\simeq}$ are representable. This in turn is the case if and only if $\mathcal{E}$ is small 
Proposition~\ref{lemmaextrect}.2.
\end{proof}

\begin{remark}
Suppose the $\infty$-category $\mathcal{C}$ has finite limits and $\mathcal{E}$ itself is a presheaf on $\mathcal{C}$. In this case, 
by virtue of Proposition~\ref{remextgrpdchar}, Theorem~\ref{thmsmall=ext} states that $\mathcal{E}$ (as a $\mathcal{C}$-indexed
$\infty$-category) is small if and only if there is a complete Segal groupoid $X$ in $\mathcal{C}$ such that $\mathrm{Ext}(X)$ is equivalent 
to $\mathcal{E}$ over $\mathcal{C}$. 
As the square (\ref{diagyonedaext}) commutes, and the corestriction of the inclusion
$\mathcal{C}\hookrightarrow\mathrm{CS}(\mathcal{C})$ to the full $\infty$-subcategory of complete groupoid objects in $\mathcal{C}$ is an 
equivalence, that means that $\mathcal{E}$ is small if and only if the presheaf $\mathcal{E}$ is representable. This however is a fairly 
trivial matter; smallness of $\mathcal{E}$ implies representability of $\mathcal{E}$ by definition, and local smallness of representables is 
for free by Example~\ref{remlocsmallrep}. Thus, thinking of the externalization functor as a generalization of the Yoneda embedding via 
Diagram~(\ref{diagyonedaext}), Theorem~\ref{thmsmall=ext} can be thought of as generalization of the designating fact that the small 
presheaves over a left exact $\infty$-category are exactly the representable ones.
\end{remark}

\begin{remark}\label{reminftysmall2}
Theorem~\ref{thmsmall=ext} holds in ordinary category theory if one replaces global smallness with the existence of (split) generic objects 
\cite{jacobsttbook, sterlinggenobj}.
To construct a split fibration with split generic object (and hence an internal category) from a fibration with generic object as in 
\cite[Construction 39]{sterlinggenobj}, it may be interesting to note that one uses the (identity-on-objects, fully faithful)-factorization 
on $\mathrm{Cat}$, which falls into the category of ``god-given'' identity principles discussed in \cite[Paragraph 8.7]{benaboufibfound}, and 
does not exist in $\infty$-category theory. In Remark~\Ref{remaltdefgencomp}, we will see that this is of consequence.

We can think of global smallness as a generalization of the completeness condition in the following sense. Let
$c\colon\mathcal{C}\rightarrow\mathrm{S}(\mathcal{C})$ be the functor which assigns to an object $C$ the constant simplicial object with 
value $C$. Then a Segal object in $\mathcal{C}$ is complete exactly if the core $X^{\simeq}\in\mathrm{S}(\mathcal{C})$ is contained in the 
essential image of 
the functor $c$ (Remark~\ref{remseglgrpd}). Likewise, a $\mathcal{C}$-indexed $\infty$-category $\mathcal{E}$ is globally small by definition 
exactly if its core $\mathcal{E}^{\simeq}$ is contained in the essential image of the functor $y\simeq\mathrm{Ext}\circ c$ (by commutativity 
of Diagram~(\ref{diagyonedaext})). Indeed, completeness of a Segal object $X$ implies global smallness of $\mathrm{Ext}(X)$, and by virtue of 
Theorem~\ref{thmsmall=ext}, global smallness of a locally small $\mathcal{C}$-indexed $\infty$-category recovers a representing complete 
Segal object. Note however that for a given Segal object $X$ in $\mathcal{C}$, smallness of $\mathrm{Ext}(X)$ is equivalent to the 
existence of a Segal completion $X\rightarrow\rho(X)$ in $\mathcal{C}$ rather than to completeness of $X$ itself.
Thus, the fact that the externalization of an internal category in a 1-category $\mathcal{C}$ is generally not globally 
small (via Proposition~\ref{prop1ext}) corresponds exactly to the fact that it generally is not complete and can't be completed in
$\mathcal{C}$.
\end{remark}

\begin{corollary}\label{corfinitecomp}
Suppose $\mathcal{C}$ has finite limits. Then a $\mathcal{C}$-indexed $\infty$-category $\mathcal{E}$ has
any of the equivalent classes of comprehension schemes from Corollary~\ref{cormonocomp} if and only if it is small. In particular, for every 
$X\in\mathrm{CS}(\mathcal{C})$ and for every finite $\infty$-category 
$J$, there is a complete Segal object $X^J\in\mathrm{CS}(\mathcal{C})$ which represents the $\mathcal{C}$-indexed $\infty$-category
$\mathrm{Ext}(X)^J$ of $J$-indexed diagrams.
\end{corollary}
\begin{proof}
Given $X\in\mathrm{CS}(\mathcal{C})$ and a finite quasi-category $J$, the $\mathcal{C}$-indexed $\infty$-category $\mathrm{Ext}(X)$ has both 
$(\emptyset\rightarrow J)$-comprehension and $(\emptyset\rightarrow J\times\Delta^1)$-comprehension by Theorem~\ref{thmsmall=ext} and 
Corollary~\ref{cormonocomp}. This means that the  $\mathcal{C}$-indexed $\infty$-category $\mathrm{Ext}(X)^J$ has 
both $(\emptyset\rightarrow\Delta^0)$-comprehension and $(\emptyset\rightarrow\Delta^1)$-comprehension, and hence is small again by 
Theorem~\ref{thmsmall=ext}.
\end{proof}

\begin{corollary}\label{corextemb}
Suppose $\mathcal{C}$ has finite limits and let $\mathrm{icat}(\mathcal{C})\subset\mathrm{Fun}(\mathcal{C}^{op},\mathrm{Cat}_{\infty})$ be 
the full $\infty$-subcategory of globally small and locally small indexed $\infty$-categories over $\mathcal{C}$. Then the externalization 
construction
\begin{align}\label{equcorextemb}
\mathrm{Ext}\colon\mathrm{CS}(\mathcal{C})\rightarrow\mathrm{icat}(\mathcal{C})
\end{align}
is an equivalence of $\infty$-categories.
\end{corollary}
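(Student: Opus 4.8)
The plan is to recognize the corollary as the formal combination of two facts already in hand: that $\mathrm{Ext}\colon\mathrm{CS}(\mathcal{C})\to\mathrm{IC}(\mathcal{C})$ is fully faithful, and that its essential image is precisely the full subcategory $\mathrm{ic}(\mathcal{C})$. Since a fully faithful and essentially surjective functor of $\infty$-categories is an equivalence, this suffices. Full faithfulness of $\mathrm{Ext}$ was recorded in Remark~\ref{remmatchrasekh} (via \cite[Corollary 3.26]{rasekh}), so the only content left to supply is the identification of the essential image.

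For this I would pass back and forth along the Straightening/Unstraightening equivalence $\mathrm{St}\colon\mathrm{Cart}(\mathcal{C})\simeq\mathrm{IC}(\mathcal{C})$, under which smallness and local smallness of a cartesian fibration cut out, by unstraightening, the full subcategory $\mathrm{ic}(\mathcal{C})\subseteq\mathrm{IC}(\mathcal{C})$. First, the forward direction of Theorem~\ref{thmsmall=ext}.1 shows that $\mathrm{Ext}$ genuinely factors through $\mathrm{ic}(\mathcal{C})$: for every complete Segal object $X$ the unstraightened fibration $\mathrm{Ext}(X)\twoheadrightarrow\mathcal{C}$ is both small and locally small. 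Conversely, the backward direction of Theorem~\ref{thmsmall=ext}.1 is exactly essential surjectivity onto $\mathrm{ic}(\mathcal{C})$: any small and locally small cartesian fibration $p$ is equivalent over $\mathcal{C}$ to $\mathrm{Ext}(X)$ for some complete Segal object $X$, so every object of $\mathrm{ic}(\mathcal{C})$ lies in the essential image of $\mathrm{Ext}$.

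Combining these, $\mathrm{Ext}$ corestricts to a functor $\mathrm{CS}(\mathcal{C})\to\mathrm{ic}(\mathcal{C})$ which is essentially surjective by the second step and fully faithful by the first, full faithfulness passing to the corestriction since $\mathrm{ic}(\mathcal{C})\hookrightarrow\mathrm{IC}(\mathcal{C})$ is a full subcategory inclusion. Hence it is an equivalence of $\infty$-categories. I expect no genuine obstacle here: all the real work sits in Theorem~\ref{thmsmall=ext}.1 and in the full faithfulness imported from \cite{rasekh}, and the corollary is the purely formal assembly of these into an equivalence onto the essential image. The one point worth stating explicitly is that ``small and locally small'' is exactly the conjunction characterized by Theorem~\ref{thmsmall=ext}.1, so that the essential image matches $\mathrm{ic}(\mathcal{C})$ on the nose rather than merely being contained in it.
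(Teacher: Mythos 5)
Your proposal is correct and matches the paper's argument: the paper's proof likewise imports full faithfulness from Remark~\ref{remmatchrasekh} (via \cite[Corollary 3.26]{rasekh}) and leaves the identification of the essential image to Theorem~\ref{thmsmall=ext}.1, which you have simply made explicit. No gaps.
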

\begin{proof}
The externalization construction is fully faithful as noted in \cite[Theorem 2.7]{rasekhcart} (via Remark~\ref{remmatchrasekh}).
\end{proof}

\begin{remark}\label{remcorfinitecomp}
Via Corollary~\ref{corextemb} one can equip $\mathrm{CS}(\mathcal{C})$ with a canonical $(\infty,2)$-categorical structure such that the 
externalization construction becomes an embedding of $(\infty,2)$-categories. The last two corollaries then show that this
$(\infty,2)$-category $\mathrm{CS}(\mathcal{C})$ has cotensors with finite $\infty$-categories 
whenever $\mathcal{C}$ has finite limits, and that the externalization functor preserves them. This generalizes \cite[(4.4)]{streetintcat} to 
$\infty$-category theory and is developed in detail in \cite{rs_ext}. Accordingly, it follows from Corollary~\ref{corsmallallcompschemes} 
below that $\mathrm{CS}(\mathcal{C})$ is cotensored over $\mathrm{Cat}_{\infty}$ whenever $\mathcal{C}$ has all small limits.
\end{remark}

\begin{example}[The universal cartesian fibration and the universal right fibration]\label{exmpleuniversalext}
We have seen in Example~\ref{exmpleunivextpre} that the universal cartesian fibration
$\pi^{op}\colon\mathrm{Dat}_{\infty}^{op}\twoheadrightarrow\mathrm{Cat}_{\infty}^{op}$ is the 
externalization of the complete Segal object $\Delta^{\bullet}$ in $\mathrm{Cat}_{\infty}^{op}$.
Unfolding the definitions, local smallness 
of the fibration $\pi^{op}$ means that every small $\infty$-category in $\mathrm{Cat}_{\infty}$ can be freely extended by an 
edge between any two of its objects to give another small $\infty$-category (Example~\ref{exmplecompschemesidcat}). 
Furthermore, smallness of the universal cartesian fibration recovers the fact that the 
universal right fibration $(\mathcal{S}_{\ast})^{op}\twoheadrightarrow\mathcal{S}^{op}$ is represented by the terminal object in
$\mathcal{S}$ as well \cite[Proposition 3.3.2.6]{luriehtt}. Indeed, the universal right fibration is the pullback of the universal cartesian 
fibration $\pi^{op}$ along a left adjoint (Example~\ref{exmpleunivcomptransfer}.6). The following corollary then shows that the complete 
Segal object $I\Delta^{\bullet}\in\mathrm{CS}(\mathcal{S}^{op})$ yields an equivalence
$(\mathcal{S}_{\ast})^{op}\simeq\mathrm{Ext}(I\Delta^{\bullet})$ over $\mathcal{S}^{op}$. But every $I\Delta^n$ is 
contractible in $\mathcal{S}$, and so
$\mathrm{Ext}(I\Delta^{\bullet})\simeq\mathcal{S}^{op}_{/\ast}\simeq(\mathcal{S}_{\ast/})^{op}$.
\end{example}

The fact that the identity on $\mathrm{Cat}_{\infty}$ is small has useful applications. For instance, reindexing along left adjoints 
preserves smallness as the following generalization of \cite[Lemma 2.3.6]{elephant} shows.

\begin{lemma}\label{corextexchange}
Let $F\colon\mathcal{D}\rightarrow\mathcal{C}$ be a functor between $\infty$-categories with pullbacks. Suppose it has a 
right adjoint $R$. Then for every complete Segal object $X$ in $\mathcal{C}$, the restriction
$F^{\ast}(\mathrm{Ext}(X))\in\mathrm{Fun}(\mathcal{D}^{op},\mathrm{Cat}_{\infty})$ is equivalent to the externalization of the complete Segal 
object $sR(X)\in\mathrm{CS}(\mathcal{D})$.
\end{lemma}

\begin{proof}
We first note that $sR(X)\in s\mathcal{D}$ is a complete Segal object, because $R$ is a right adjoint and hence preserve limits.
Now consider the composition 
\begin{align}\label{equcorextexchange}
F^{\ast}(sy(X))\xrightarrow{\vec{R}} (RF)^{\ast}(sy(sR(X)))\xrightarrow{\eta^{\ast}} sy(sR(X))
\end{align}
of natural transformations in $\mathrm{Fun}(\mathcal{D}^{op},\mathrm{CS}(\mathcal{S}))$. Here, $\eta\colon 1\rightarrow RF$ denotes 
the unit of the adjunction $F\adj R$, and $\vec{R}$ denotes the action of the right adjoint $R$ on hom-spaces. The latter can be formally 
constructed  via the explicit definition of the Yoneda embedding in \cite[Section 5.1.3]{luriehtt}. To show that the composition 
(\ref{equcorextexchange}) is an equivalence, it suffices to do so pointwise. But for any given $D\in\mathcal{D}$ and every $n\geq 0$, the 
natural transformation (\ref{equcorextexchange}) evaluated at $D$ and at $n$ is exactly the functor
\[\mathcal{C}(F(D),X_n)\xrightarrow{\vec{R}}\mathcal{D}(R(F(D)),R(X_n))\xrightarrow{\eta^{\ast}}\mathcal{D}(D,R(X_n))\]
which is an equivalence by the fact that $(F,R,\eta)$ forms an adjunction \cite[Proposition 5.2.2.8]{luriehtt}. In particular, the 
natural equivalence (\ref{equcorextexchange}) induces an equivalence between the $\mathcal{D}$-indexed $\infty$-categories
\[\mathrm{Ho}_{\infty}(p_h)\circ(F^{\ast}(sy(X)))=F^{\ast}(\mathrm{Ho}_{\infty}(p_h)(sy(X)))=F^{\ast}\mathrm{Ext}(X)\]
and $\mathrm{Ho}_{\infty}(p_h)\circ sy(sR(X))=\mathrm{Ext}(sR(X))$.
\end{proof}

Whenever $\mathcal{C}$ is not only left exact but complete, we thus obtain the following characterization of small indexed 
$\infty$-categories over $\mathcal{C}$.

\begin{proposition}\label{proprightadj}
Suppose $\mathcal{C}$ is a complete $\infty$-category. Then a $\mathcal{C}$-indexed $\infty$-category $\mathcal{E}$ is small if and only if 
the functor $\mathcal{E}\colon\mathcal{C}^{op}\rightarrow\mathrm{Cat}_{\infty}$  has a left adjoint.
\end{proposition}
\begin{proof}
Suppose $\mathcal{E}\colon\mathcal{C}^{op}\rightarrow\mathrm{Cat}_{\infty}$ has a left adjoint $L$. Then its opposite
$\mathcal{E}^{op}\colon\mathcal{C}\rightarrow\mathrm{Cat}_{\infty}^{op}$ is itself left adjoint to $L^{op}$. As the identity
$\mathrm{id}\colon(\mathrm{Cat}_{\infty}^{op})^{op}\rightarrow\mathrm{Cat}_{\infty}$ is the externalization of the complete Segal object
$\Delta^{\bullet}\in\mathrm{CS}(\mathrm{Cat}_{\infty}^{op})$ (Example~\ref{exmpleuniversalext}), it follows from Lemma~\ref{corextexchange} 
that $\mathcal{E}=\mathcal{E}^{\ast}(\mathrm{id})$ is the externalization of the complete Segal object $L^{op}(\Delta^{\bullet})$.
Vice versa, let $X$ be a complete Segal object in $\mathcal{C}$ and consider its externalization
\[\mathcal{C}^{op}\xrightarrow{sy(X)}\mathrm{CS}(\mathcal{S})\xrightarrow{\mathrm{Ho}_{\infty}(p_h)}\mathrm{Cat}_{\infty}.\]
Since $\mathrm{Ho}_{\infty}(p_h)$ has a left adjoint, it suffices to show that $sy(X)=X^{\ast}\circ y$ has a left adjoint as well.
Therefore, we note that if we consider $X^{op}\colon\Delta\rightarrow\mathcal{C}^{op}$ as a cosimplicial object of $\mathcal{C}^{op}$, then
$sy(X)=N(X^{op})\colon\mathcal{C}^{op}\rightarrow s\mathcal{S}$ is but a nerve construction of $X^{op}$. We therefore can show that $sy(X)$ 
is the right adjoint to the left Kan extension of $X^{op}$ along the Yoneda embedding $y\colon\Delta\rightarrow s\mathcal{S}$.
To do so explicitly, we use \cite[Proposition 3.1.2]{nrsadjoint} and show that for all simplicial spaces $Z\in s\mathcal{S}$, the comma
$\infty$-category
\[\xymatrix{
Z\downarrow sy(X)\ar@{->>}[d]\ar[r]\ar@{}[dr]|(.3){\pbs} & (s\mathcal{S})_{Z/}\ar@{->>}[d]^t \\
\mathcal{C}^{op}\ar[r]_(.5){sy(X)} & s\mathcal{S}
}\]
has an initial object (or equivalently, that it is corepresentable over $\mathcal{C}^{op}$). First, if $Z$ is a representable $y([n])$, we 
obtain a diagram as follows.
\[\xymatrix{
\mathcal{C}^{op}_{X_n/}\ar[r]\ar@{->>}[d]  & \widehat{\mathcal{C}^{op}}_{y(X_n)/}\ar[r]\ar@{->>}[d]  & \widehat{N(\Delta)}_{y([n])/}\ar[r]\ar@{->>}[d]  & \mathcal{S}_{\ast}\ar@{->>}[d]  \\
\mathcal{C}^{op}\ar[r]_{y} & \widehat{\mathcal{C}^{op}}\ar[r]_{X^{\ast}} & \widehat{N(\Delta)}\ar[r]_{\mathrm{ev}([n])} & \mathcal{S}
}\]
The square on the right hand side is homotopy cartesian in the Joyal model structure by Examples~\ref{explscartfibs}.4. Since
$\mathrm{ev}([n])\circ X^{\ast}=\mathrm{ev}(X_n)$, the right composite 
rectangle is a homotopy pullback as well for the same reason. This implies that the center square is a homotopy 
pullback. The square on the left hand side is a homotopy pullback, because the Yoneda embedding is fully faithful. Thus, eventually, the left composite rectangle is a homotopy pullback, which yields the equivalence
\[\mathcal{C}^{op}_{X_n/}\simeq y([n])\downarrow sy(X)\]
over $\mathcal{C}^{op}$.
For general simplicial spaces $Z\in\widehat{N(\Delta)}$, one can now simply use that $Z$ is the colimit of 
representables. Indeed, it follows that the comma $\infty$-category $\widehat{N(\Delta)}_{Z/}$ is a homotopy limit of 
corepresentables, and hence so is the pullback along $sy(X)$ by what we just have shown, since 
homotopy pullbacks preserve homotopy limits. Since $\mathcal{C}$ is assumed to be complete, this limit of 
corepresentables over $\mathcal{C}^{op}$ is corepresented by the respective colimit in $\mathcal{C}^{op}$.
It follows that
$sy(X)\colon\mathcal{C}^{op}\rightarrow\mathrm{Fun}((\Delta)^{op},\mathcal{S})$ has a left adjoint, 
and so does the corestriction $sy(X)\colon\mathcal{C}^{op}\rightarrow\mathrm{CS}(\mathcal{S)}$ as the inclusion
$\mathrm{CS}(\mathcal{S})\subset\mathrm{Fun}(N(\Delta)^{op},\mathcal{S})$ is fully faithful. 
\end{proof}

\begin{remark}\label{remproprightadj}
In particular, whenever $\mathcal{C}$ is a presentable $\infty$-category, Proposition~\ref{proprightadj} states that a $\mathcal{C}$-indexed 
$\infty$-category is the externalization of a complete Segal object if and only if it preserves limits by the Adjoint Functor Theorem 
\cite{nrsadjoint}. In the special case of  presheaves over $\mathcal{C}$ this is exactly the basic fact that limit preserving presheaves are 
representable \cite[Proposition 5.5.2.2]{luriehtt}.
\end{remark}

\begin{remark}\label{remproprightadj2}
In Example~\ref{explegencasela} we saw that a functor $\mathcal{E}\colon\mathcal{C}^{op}\rightarrow K$ into an $\infty$-category $K$ with an 
initial object has $(K,\emptyset\rightarrow k)$-comprehension for all objects $k$ if and only if the functor $\mathcal{E}$ has a left 
adjoint. Thus, Proposition~\ref{proprightadj} (together with Corollary~\ref{cormonocomp}.2) states that in the case that $\mathcal{C}$ is 
complete and $K$ is the $\infty$-category $\mathrm{Cat}_{\infty}$, this criterion can be reduced to only
$(\emptyset\rightarrow\Delta^i)$-comprehension for $i=0,1$.
\end{remark}

\begin{corollary}\label{corsmallallcompschemes}
Suppose $\mathcal{C}$ is a complete $\infty$-category and $\mathcal{E}$ is a small $\mathcal{C}$-indexed $\infty$-category. Then
$\mathcal{E}$ satisfies all comprehension schemes which are satisfied by the identity
$\mathrm{id}\colon(\mathrm{Cat}_{\infty}^{op})^{op}\rightarrow\mathrm{Cat}_{\infty}$. Thus, $\mathcal{E}$ is small if and only if it has
comprehension for all functors between small $\infty$-categories. In particular, for every $X\in\mathrm{CS}(\mathcal{C})$ and for every small
$\infty$-category $J$, there is a complete Segal object $X^J\in\mathrm{CS}(\mathcal{C})$ which represents the $\mathcal{C}$-indexed
$\infty$-category $\mathrm{Ext}(X)^J$ of $J$-indexed diagrams.
\end{corollary}

\begin{proof}
Whenever $\mathcal{E}$ is small, the functor $\mathcal{E}^{op}\colon\mathcal{C}\rightarrow\mathrm{Cat}_{\infty}^{op}$ has a right adjoint by 
Proposition~\ref{proprightadj}. Thus, by Lemma~\ref{lemmachangeofbasecomp}, it follows that all comprehension 
schemes satisfied by the identity on $\mathrm{Cat}_{\infty}$ are transferred to $\mathcal{E}$. The second statement therefore follows 
from Example~\ref{exmplecompschemesidcat}. The last statement follows in the same way as the according claim in 
Corollary~\ref{corfinitecomp}.
\end{proof}

\section{Fibered notions of general comprehension and definability}\label{seccompcats}

In this section we briefly discuss the structural theory of general comprehension schemes (Definition~\ref{defgencomp}). We further introduce 
the notion of relative definability of full subfibrations following \cite[Paragraph 7.4]{benaboufibfound} in terms of general comprehension 
and in context of the notions studied in Section~\ref{secext} and \ref{secextuniv}. 

\begin{definition}
Let $p$ and $q$ be cartesian fibrations over 	$\mathcal{C}$ and $f\colon p\rightarrow q$ be a cartesian functor over $\mathcal{C}$. Say that
$\mathcal{C}$ has \emph{$f$-comprehension} if both natural transformations
$\mathrm{St}(f)^{\simeq}\colon\mathrm{St}(p)^{\simeq}\rightarrow\mathrm{St}(q)^{\simeq}$ and
$(\mathrm{St}(f)^{\Delta^1})^{\simeq}\colon(\mathrm{St}(p)^{\Delta^1})^{\simeq}\rightarrow(\mathrm{St}(q)^{\Delta^1})^{\simeq}$ in
$\hat{\mathcal{C}}$ are representable.
\end{definition}

In terms of Definition~\ref{defgencomp}, an $\infty$-category $\mathcal{C}$ has $f$-comprehension for a cartesian functor $f$ over
$\mathcal{C}$ exactly if $\mathcal{C}$ has both $\mathrm{St}(f)^{\simeq}$-comprehension and
$(\mathrm{St}(f)^{\Delta^1})^{\simeq}$-comprehension. Whenever $\mathcal{C}$ has pullbacks, $f$-comprehension implies
$(\mathrm{St}(f)^{\Delta^n})^{\simeq}$-comprehension for all $n\geq 0$. This follows directly from Lemma~\ref{lemmarepnattransfbasics}.2 and 
the fact that $(\mathrm{St}(f)^{\Delta^{\bullet}})^{\simeq}$ is a functor between (complete) Segal objects in $\hat{\mathcal{C}}$.

\begin{definition}\label{defdefsubfib}
Let $p\colon\mathcal{E}\twoheadrightarrow\mathcal{C}$ be a cartesian fibration and
$\mathcal{E}\sprime\subseteq\mathcal{E}$ be a full $\infty$-subcategory. If the restriction
\[\xymatrix{
\mathcal{E}\sprime\ar@{^(->}[rr]^{\iota}\ar@{->>}[dr]_{p\sprime} & & \mathcal{E}\ar@{->>}[dl]^	p \\
 & \mathcal{C} & 
}\]
is a cartesian functor of cartesian fibrations, we say that $p\sprime$ is a \emph{full subfibration} of $p$. A full subfibration $p\sprime$ 
of $p$ is \emph{definable} if $\mathcal{C}$ has $(p\sprime\subseteq p)$-comprehension.
\end{definition}

Given a full subfibration $p\sprime\colon\mathcal{E}\sprime\twoheadrightarrow\mathcal{C}$ of
$p\colon\mathcal{E}\twoheadrightarrow \mathcal{C}$, it is easy to see that a morphism of $\mathcal{E}\sprime$ is $p\sprime$-cartesian if and 
only if it is $p$-cartesian. In particular, $p\sprime$ is a full $\mathcal{C}$-subcategory of $p$ in the sense of
\cite[Section 4]{barwicketalparahct}. We say that $p\sprime$ is a full and replete subfibration of $p$ whenever the $\infty$-subcategory
$\mathcal{E}\sprime\subseteq\mathcal{E}$ is 
also replete. By \cite[Lemma 4.5]{barwicketalparahct}, given a cartesian fibration $p\colon\mathcal{E}\twoheadrightarrow\mathcal{C}$ 
and a full and replete $\infty$-subcategory $\mathcal{E}\sprime\subseteq\mathcal{E}$, the restriction of $p$ to
$\mathcal{E}\sprime$ yields a (full and replete) subfibration of $p$ if and only if for every $p$-cartesian morphism $f$ in 
$\mathcal{E}$, the morphism $f$ is contained in $\mathcal{E}\sprime$ whenever its codomain is contained in $\mathcal{E}\sprime$.
It is easy to see that whenever $p\sprime\subseteq p$ is full, the square
\begin{align}\label{diagfullsubfib}
\begin{gathered}
\xymatrix{
\llbracket\Delta^1,\mathcal{E}\sprime\rrbracket\ar[d]\ar@{->>}[r]^{(\delta^1)^{\ast}} \ar@{}[dr]|(.3){\pbs} & \llbracket\partial\Delta^1,\mathcal{E}\sprime\rrbracket\ar[d]\\
\llbracket\Delta^1,\mathcal{E}\rrbracket\ar@{->>}[r]_{(\delta^1)^{\ast}} & \llbracket\partial\Delta^1,\mathcal{E}\rrbracket
}
\end{gathered}
\end{align}
is a pullback of right fibrations.
We further note that inclusions of full subfibrations $p\sprime\subseteq p$ are monomorphisms in the $\infty$-category
$\mathrm{Cart}(\mathcal{C})$, and hence they induce (pointwise fully faithful) monomorphisms
$\mathrm{St}(p\sprime)\hookrightarrow\mathrm{St}(p)$ of associated $\mathcal{C}$-indexed $\infty$-categories as well. Thus, the term 
definability is to be understood as comprehension of substructures.

\begin{remark}
In \cite{benaboufibfound} (and in \cite{streicherfibcats} accordingly), definability is a property of classes of objects 
in a cartesian fibration $\mathcal{E}\twoheadrightarrow\mathcal{C}$. The conditions thereby imposed on such classes
$K\subseteq\mathcal{E}$ of objects are equivalent to the assertion that the full subcategory $\mathcal{K}\subseteq\mathcal{E}$ generated 
by $K$ yields a full and replete subfibration of $p$. Then it is easy to show that the class $K$ is definable in the sense of 
\cite{benaboufibfound} if and only if the full subfibration $\mathcal{K}^{\times}\subseteq\mathcal{E}^{\times}\twoheadrightarrow\mathcal{C}$ 
is definable in the sense of Definition~\ref{defdefsubfib}. 
\end{remark}

As alluded to in the introduction, a full subfibration of the canonical fibration over a 1-category $\mathcal{C}$ 
with pullbacks is essentially the same thing as a set-valued general comprehension scheme over $\mathcal{C}$ (Definition~\ref{defgencomp}).
With the ordinary categorical framework in mind, we make the following definitions.
\begin{definition}\label{defcompcatswatts}
Let $\mathcal{C}$ be an $\infty$-category. A \emph{comprehension $\infty$-category} over $\mathcal{C}$ is a cartesian fibration
$p\colon\mathcal{E}\twoheadrightarrow\mathcal{C}$ together with a functor
\[\xymatrix{
\mathcal{E}\ar@/_1pc/@{->>}[dr]_p\ar[rr]^(.4){f} & & \mathrm{Fun}(\Delta^1,\mathcal{C})\ar@/^1pc/[dl]^t\\
 & \mathcal{C} & 
}\]
over $\mathcal{C}$ which preserves cartesian morphisms. A comprehension $\infty$-category is \emph{full} if its underlying functor
$\mathcal{E}\rightarrow\mathrm{Fun}(\Delta^1,\mathcal{C})$ is fully faithful.

An \emph{$\infty$-category with attributes} over $\mathcal{C}$ is a comprehension $\infty$-category $(p,f)$ over $\mathcal{C}$ such that $p$ 
is a right fibration.
\end{definition}

\begin{definition}
Let $\mathrm{CmpCat}_{\infty}(\mathcal{C})$ be the $\infty$-subcategory of the slice $((\mathrm{Cat}_{\infty})_{/\mathcal{C}})_{/t}$ spanned 
by the comprehension $\infty$-categories over $\mathcal{C}$ and those diagrams $F\colon(p,f)\rightarrow (q,g)$ such that
$F\colon p\rightarrow q$ is a cartesian functor. Let $\mathrm{FCmpCat}_{\infty}(\mathcal{C})\subset\mathrm{CmpCat}_{\infty}(\mathcal{C})$ 
denote the full $\infty$-subcategory spanned by the full comprehension $\infty$-categories.

Let $\mathrm{CwA}_{\infty}(\mathcal{C})\subset\mathrm{CompCat}_{\infty}(\mathcal{C})$ be the full $\infty$-subcategory spanned by the
$\infty$-categories with attributes over $\mathcal{C}$.
 
For a presheaf $Y\in\hat{\mathcal{C}}$, let $\mathrm{Rep}(\mathcal{C})_{/Y}\subset\hat{\mathcal{C}}_{/Y}$ be the full 
$\infty$-category spanned by the general comprehension schemes with base $Y\in\hat{\mathcal{C}}$ on $\mathcal{C}$.
\end{definition}

Whenever $\mathcal{C}$ has pullbacks, the target functor $t\colon\mathrm{Fun}(\Delta^1,\mathcal{C})\twoheadrightarrow\mathcal{C}$ is a 
cartesian fibration itself, and so $\mathrm{CmpCat}_{\infty}(\mathcal{C})$ is thus just the slice $\mathrm{Cart}(\mathcal{C})_{/t}$. 
Analogously, in this case $\mathrm{CwA}_{\infty}(\mathcal{C})$ is the slice $\mathrm{RFib}(\mathcal{C})_{/t^{\times}}$.

\begin{proposition}\label{propaltdefgencomp}
Let $\mathcal{C}$ be a small $\infty$-category and $Y\in\hat{\mathcal{C}}$. Then the $\infty$-categories $\mathrm{Rep}(\mathcal{C})_{/Y}$
and the full $\infty$-subcategory of $\mathrm{CwA}_{\infty}(\mathcal{C})$ spanned by tuples $(p,f)$ such that $p\simeq\mathrm{Un}(Y)$ are 
equivalent.
\end{proposition}
\begin{proof}
For any given presheaf $Y\in\hat{\mathcal{C}}$, the unstraightening $U\colon\mathrm{Un}(Y)\twoheadrightarrow\mathcal{C}$ is the
$\infty$-category of elements of $Y$ in as much as the colimit of the composition
\[\mathrm{Un}(Y)\xrightarrow{U}\mathcal{C}\xrightarrow{y}\hat{\mathcal{C}}\]
is equivalent to $Y$ itself \cite[Lemma 5.1.5.3]{luriehtt}. Hence, by virtue of descent of the $\infty$-topos $\hat{\mathcal{C}}$, the 
restriction functor
\begin{align}\label{equpropaltdefgencomp}
\mathrm{res}\colon\hat{\mathcal{C}}_{/Y}\rightarrow\mathrm{Fun}(\mathrm{Un}(Y),\hat{\mathcal{C}})\downarrow^{\times}yU
\end{align}
given componentwise by pullback along the elements $y(C)\rightarrow Y$ is part of an equivalence (with inverse given by the colimit 
functor), where the right hand side denotes the $\infty$-category of cartesian natural transformations over the functor $yU$
\cite[Section 3.3]{aneljoyaltopos}. The $\infty$-category $\mathrm{Fun}(\mathrm{Un}(Y),\hat{\mathcal{C}})_{/yU}$ of natural transformations 
over $yU$ is canonically equivalent to the $\infty$-category of functors from $yU\colon\mathrm{Un}(Y)\rightarrow\hat{\mathcal{C}}$ to
$t\colon\hat{\mathcal{C}}^{\Delta^1}\twoheadrightarrow\hat{\mathcal{C}}$ over $\hat{\mathcal{C}}$. Accordingly, the codomain of 
(\ref{equpropaltdefgencomp}) as a full $\infty$-subcategory of $\mathrm{Fun}(\mathrm{Un}(Y),\hat{\mathcal{C}})_{/yU}$ is in turn equivalent 
to the $\infty$-category of functors from $yU\colon\mathrm{Un}(Y)\rightarrow\hat{\mathcal{C}}$ to the right fibration
$t^{\times}\colon\mathrm{Fun}(\Delta^1,\hat{\mathcal{C}})^{\times}\twoheadrightarrow\hat{\mathcal{C}}$ over $\hat{\mathcal{C}}$.
\begin{align}\label{diagpropaltdefgencomp}
\begin{gathered}
\xymatrix{
 & & \mathrm{Fun}(\Delta^1,\mathcal{C})^{\times}\ar@/^/[ddl]|(.47)\hole^(.6)t\ar@{^(->}[rrr]^y  & & & \mathrm{Fun}(\Delta^1,\hat{\mathcal{C}})^{\times}\ar@{->>}@/^/[ddl]^t\\
\mathrm{Un}(Y)\ar@{=}[rrr]\ar@{->>}@/_/[dr]_{U}\ar@{-->}[urr] & & & \mathrm{Un}(Y)\ar@/_/[dr]^{yU}\ar@{-->}[urr] & &  \\
 & \mathcal{C}\ar@{^(->}[rrr]_y & & & \hat{\mathcal{C}} & 
}
\end{gathered}
\end{align}
A natural transformation $f\colon X\rightarrow Y$ in $\hat{\mathcal{C}}$ is representable if and only if the associated functor
$\mathrm{Un}(Y)\rightarrow\mathrm{Fun}(\Delta^1,\mathcal{C})^{\times}$ as depicted by a dotted arrow on the right of diagram 
(\ref{diagpropaltdefgencomp}) lifts along
$y\colon\mathrm{Fun}(\Delta^1,\mathcal{C})^{\times}\rightarrow\mathrm{Fun}(\Delta^1,\hat{\mathcal{C}})^{\times}$ to a functor over
$\mathcal{C}$ as depicted by a dotted arrow on the left of the diagram. We hence obtain an equivalence between the $\infty$-category of 
general comprehension schemes with base $Y$ on $\mathcal{C}$ on the one hand, and $\infty$-categories with attributes of the form
$(\mathrm{Un}(Y),f)$ over $\mathcal{C}$ on the other. 
\end{proof}

Clearly, we furthermore have a functor
$(\cdot)^{\times}\colon\mathrm{CmpCat}_{\infty}(\mathcal{C})\rightarrow\mathrm{CwA}_{\infty}(\mathcal{C})$. Its restriction to 
$\mathrm{FCmpCat}_{\infty}(\mathcal{C})$ comes with a retraction $r$ given by factorization of an underlying functor
$\mathcal{E}\rightarrow\mathrm{Fun}(\Delta^1,\mathcal{C})$ through an essentially surjective functor followed by a fully faithful and replete 
functor. The pair 
\begin{align}\label{equcompcatcore}
\xymatrix{
(\cdot)^{\times}\colon\mathrm{FCmpCat}_{\infty}(\mathcal{C})\ar@<.5ex>@{^(->}[r] & \mathrm{CwA}_{\infty}(\mathcal{C})\colon r \ar@<.5ex>[l]
}
\end{align}
however does not form an equivalence of $\infty$-categories; rather, full comprehension $\infty$-categories are particularly simple
$\infty$-categories with attributes in which the entire higher structure of the homotopy types of the attributes are fully determined by
$\mathcal{C}$. We elaborate on this in the following remark and the subsequent proposition.

\begin{remark}\label{remaltdefgencomp}
In 1-category theory, the abstract equivalence of categories with attributes over a category $\mathcal{C}$ (together with a fixed discrete 
fibration $p\colon\mathcal{E}\twoheadrightarrow\mathcal{C}$) and full comprehension categories over $\mathcal{C}$ (together with a fixed 
cartesian fibration $p\colon\mathcal{E}\twoheadrightarrow\mathcal{C}$) is a formal consequence of the existence of the
(bijective-on-objects, fully faithful)-factorization system on $\mathrm{Cat}$. See e.g.\ \cite[Lemma 4.9, Example 4.10]{jacobscompcats}. 
Such a factorization system (with respect to an accordingly univalent notion of ``identity-on-objects'') does not exist in
$\mathrm{Cat}_{\infty}$\footnote{As already noted in Remark~\ref{reminftysmall}, this factorization system on $\mathrm{Cat}$
relies exactly on the very specific kind of equality principle which in \cite[Paragraph 8.7]{benaboufibfound} is considered to be entirely 
meta-theoretical rather than category theoretical.}; and in fact it is 
not hard to show that $\infty$-categories with attributes and full comprehension $\infty$-categories over a given $\infty$-category
$\mathcal{C}$ are generally non-equivalent structures.
\end{remark}

\begin{proposition}\label{propccatcwanot}
The $\infty$-categories $\mathrm{FCmpCat}_{\infty}(\mathcal{C})$ and $\mathrm{CwA}_{\infty}(\mathcal{C})$ are generally not equivalent.
\end{proposition}
\begin{proof}
Consider the left exact $\infty$-category $\mathcal{C}=\ast$. The $\infty$-category
$\mathrm{CwA}_{\infty}(\ast)\simeq\mathrm{RFib}(\ast)_{/1_{\ast}}$ is just the $\infty$-category $\mathcal{S}$ of spaces.
The full $\infty$-subcategory of $\mathrm{CompCat}_{\infty}(\ast)\simeq(	\mathrm{Cat}_{\infty})_{/1_{\ast}}$ spanned by the fully faithful 
functors however is the $\infty$-category of locally contractible $\infty$-categories (which are exactly the locally small cartesian 
fibrations over the point by Example~\ref{exmplecntrblty}). But a locally contractible $\infty$-category is automatically either empty or a 
contractible space, and so the homotopy category of locally contractible $\infty$-categories has only two objects up to isomorphism.
Hence, the two structures over a point are not equivalent.
\end{proof}

Proposition~\ref{propccatcwanot} can be shown even more concretely over a fixed base $Y\in\hat{\mathcal{C}}$: let $\mathcal{C}=\ast$ and $Y$ 
be any non-empty and non-contractible space considered as an $\infty$-category with attributes over the point. Then the full subcategory of
$\mathrm{CwA}_{\infty}(\ast)$ spanned by pairs $(X,f)$ such that $X\simeq Y$ is contractible. But the $\infty$-category of locally 
contractible $\infty$-categories such that the core $\mathcal{C}^{\simeq}=\mathcal{C}$ is equivalent to $Y$ is empty.

\begin{remark}
The proof of Proposition~\ref{propccatcwanot} in ordinary category theory computes up to equivalence the category of sets on the one hand, 
and the category of complete graphs on the other hand. These however are indeed isomorphic.
\end{remark}

In fact, via Proposition~\ref{propaltdefgencomp} one can characterize the full comprehension $\infty$-categories among all
$\infty$-categories with attributes exactly as those representable natural transformations that are univalent (as discussed in 
Section~\ref{secextuniv}). 
Therefore, we note that the proof of Proposition~\ref{propaltdefgencomp} in fact shows that the ``universe of sections'' associated to a 
small $\infty$-category $\mathcal{C}$ with pullbacks is the universal comprehension scheme over $\mathcal{C}$ as we show in the following
$\infty$-categorical version of \cite[Proposition 2.7]{shulmanuniverses}.

\begin{proposition}\label{propunivreptransf}
Suppose $\mathcal{C}$ is a small $\infty$-category with pullbacks. Then there is a representable natural transformation
$\pi_{\mathcal{C}}\colon(\mathcal{C}_{/(\cdot)})^{\simeq}_{\ast}\rightarrow(\mathcal{C}_{/(\cdot)})^{\simeq}$ in $\hat{\mathcal{C}}$ such that for every $Y\in\hat{\mathcal{C}}$, the functor
\[(\cdot)^{\ast}\pi_{\mathcal{C}}\colon\hat{\mathcal{C}}(Y,(\mathcal{C}_{/(\cdot)})^{\simeq})\rightarrow(\mathrm{Rep}(\mathcal{C})_{/Y})^{\simeq}\]
which maps a natural transformation $f\colon Y\rightarrow(\mathcal{C}_{/(\cdot)})^{\simeq}$ to the pullback
$f^{\ast}\pi_{\mathcal{C}}\in\hat{\mathcal{C}}_{/Y}$ is an equivalence of $\infty$-categories.
\end{proposition}
\begin{proof}
By definition, the canonical indexing $(\mathcal{C}_{/(\cdot)})^{\simeq}\in\hat{\mathcal{C}}$ is the straightening of the right fibration
$t^{\times}\colon(\mathrm{Fun}(\Delta^1,\mathcal{C})^{\times}\twoheadrightarrow\mathcal{C}$. It thus follows again from
\cite[Lemma 5.1.5.3]{luriehtt} that the colimit of the composition
\[\mathrm{Fun}(\Delta^1,\mathcal{C})^{\times}\overset{t^{\times}}{\twoheadrightarrow}\mathcal{C}\xrightarrow{y}\hat{\mathcal{C}}\]
is equivalent to the presheaf $(\mathcal{C}_{/(\cdot)})^{\simeq}$ in $\hat{\mathcal{C}}$. In particular, the colimit of the functor
\begin{align}\label{equpropunivreptransf}
\mathrm{Fun}(\Delta^1,y)\colon\mathrm{Fun}(\Delta^1,\mathcal{C})^{\times}\rightarrow\mathrm{Fun}(\Delta^1,\hat{\mathcal{C}})^{\times}
\end{align}
is a natural transformation over $(\mathcal{C}_{/(\cdot)})^{\simeq}$. We denote this colimit by
$\pi_{\mathcal{C}}\colon(\mathcal{C}_{/(\cdot)})^{\simeq}_{\ast}\rightarrow(\mathcal{C}_{/(\cdot)})^{\simeq}$. Verification of all the 
claims is now a routine exercise in the application of descent properties of $\infty$-toposes. Indeed, we first observe that the functor
\begin{align}\label{equpropunivreptransf1}
(\cdot)^{\ast}\pi_{\mathcal{C}}\colon\hat{\mathcal{C}}(Y,(\mathcal{C}_{/(\cdot)})^{\simeq})\rightarrow(\hat{\mathcal{C}}_{/Y})^{\simeq}
\end{align}
induced by pullback of the natural transformation $\pi_{\mathcal{C}}$ is fully faithful (i.e.\ $\pi_{\mathcal{C}}$ is a univalent morphism in 
$\hat{\mathcal{C}}$ as discussed in Section~\ref{secextuniv}). By virtue of descent of $\hat{\mathcal{C}}$ and the fact that the class of 
fully faithful functors between spaces is closed under small limits, it suffices to show this for $Y=y(C)$ a representable presheaf. In this 
case the functor (\ref{equpropunivreptransf1}) is simply the (core of the) Yoneda embedding
\[y\colon(\mathcal{C}_{/C})^{\simeq}\rightarrow(\widehat{\mathcal{C}_{/C}})^{\simeq}\]
under the canonical equivalence $\hat{\mathcal{C}}_{/y(C)}\simeq\widehat{\mathcal{C}_{/C}}$ given by the Yoneda lemma. In particular, it is 
fully faithful.

Second, the natural transformation $\pi_{\mathcal{C}}$ is representable as for every $C\in\mathcal{C}$ and every natural transformation
$\ulcorner f\urcorner \colon y(C)\rightarrow(\mathcal{C}_{/(\cdot)})^{\simeq}$, the canonical square
\[\xymatrix{
y(\mathrm{dom}f)\ar[r]\ar[d]_{yf} & (\mathcal{C}_{/(\cdot)})^{\simeq}_{\ast}\ar[d]^{\pi_{\mathcal{C}}}\\
y(C)\ar[r]_(.4){\ulcorner f\urcorner} & (\mathcal{C}_{/(\cdot)})^{\simeq}
}\]
given by the cocone that defines $\pi_{\mathcal{C}}$ as the colimit of (\ref{equpropunivreptransf}) is cartesian by virtue of the fact that 
(\ref{equpropunivreptransf}) is a cartesian natural transformation in $\hat{\mathcal{C}}$, and by the fact that colimits in
$\hat{\mathcal{C}}$ are effective. Thus, since representability of natural transformations is a pullback-stable property 
(Lemma~\ref{lemmarepnattransfbasics}.1), the fully faithful functor (\ref{equpropunivreptransf1}) factors through the full subspace
$(\mathrm{Rep}(\mathcal{C})_{/Y})^{\simeq}$. We are left to show that this functor is essentially surjective; but this is exactly the content 
of Proposition~\ref{propaltdefgencomp}. More precisely, given a representable natural transformation $f\colon X\rightarrow Y$ in
$\hat{\mathcal{C}}$, let $\mathrm{res}(f)\colon \mathrm{Un}(Y)\rightarrow\mathrm{Fun}(\Delta^1,\mathcal{C})^{\times}$ over $\mathcal{C}$ be 
its associated $\infty$-category with attributes from Proposition~\ref{propaltdefgencomp} defined via the equivalence 
(\ref{equpropaltdefgencomp}). Diagram (\ref{diagpropaltdefgencomp}) induces a 
square in $\hat{\mathcal{C}}$ from the colimit of the composition
$\mathrm{res}(f)\colon\mathrm{Un}(Y)\rightarrow\mathrm{Fun}(\Delta^1,\hat{\mathcal{C}})^{\times}$ to the colimit of the inclusion
$\mathrm{Fun}(\Delta^1,y)\colon \mathrm{Fun}(\Delta^1,\mathcal{C})^{\times})\rightarrow\mathrm{Fun}(\Delta^1,\hat{\mathcal{C}})^{\times})$.
That is, a square in $\hat{\mathcal{C}}$ from $f$ to $\pi_{\mathcal{C}}$.	This square is cartesian again by virtue of descent of
$\hat{\mathcal{C}}$.
\end{proof}

\begin{corollary}\footnote{The observation of this corollary is due to Jonas Frey; I personally had missed this.}
Let $\mathcal{C}$ be a small $\infty$-category and $Y\in\hat{\mathcal{C}}$. Then the equivalence of Proposition~\ref{propaltdefgencomp} 
restricts to an equivalence between the full $\infty$-subcategory spanned by the full comprehension $\infty$-categories via 
(\ref{equcompcatcore}) on the one hand, and the full $\infty$-subcategory spanned by the univalent representable natural transformations over 
$Y$ on the other hand.
\end{corollary}
\begin{proof}
For any given representable natural transformation $f\colon X\rightarrow Y$ in $\hat{\mathcal{C}}$ there is an essentially unique classifying 
morphism $\ulcorner f\urcorner\colon Y\rightarrow(\mathcal{C}_{/(\cdot)})^{\simeq}$ by Proposition~\ref{propunivreptransf}. This classifying 
morphism is by construction the value of the colimit functor applied to the morphism
\[\mathrm{res}_f\colon yU\rightarrow yt\]
of functors over $\hat{\mathcal{C}}$, where $U\colon\mathrm{Un}(Y)\twoheadrightarrow\mathcal{C}$ denotes the unstraightening of $Y$. This 
colimit computes the straightening $\mathrm{St}(\mathrm{res}_f)\colon Y\rightarrow(\mathcal{C}_{/(\cdot)})^{\simeq}$, essentially because the 
Yoneda embedding left Kan extends to the identity along itself \cite[Lemma 5.1.5.3]{luriehtt}. It follows in turn that the fibered functor
$\mathrm{res}_f\colon U\rightarrow t$ over $\mathcal{C}$ is the unstraightening of the classifying morphism $\ulcorner f\urcorner$. 

Now, as the universal representable natural transformation $\pi_{\mathcal{C}}$ is univalent, the transformation 
$f$ itself is univalent if and only if its classifying morphism $\ulcorner f\urcorner$ is monic \cite[Proposition 2.5]{rasekhunivalence}.  
This in turn holds if and only if its 
unstraightening $\mathrm{res}_f\colon U\rightarrow t$ is monic, which is to say that the associated $\infty$-category with attributes
$\mathrm{res}_f\colon U\rightarrow t$ is (the core of) a full comprehension $\infty$-category.
\end{proof}

We return to the notion of definability introduced in Definition~\ref{defdefsubfib}. We first note that definability of a full subfibration 
can be reduced to definability of its associated right fibration of objects in the following sense.

\begin{lemma}\label{lemmadefreduct}
Let $p\colon\mathcal{E}\twoheadrightarrow\mathcal{C}$ and $p\sprime\colon\mathcal{E}\sprime\twoheadrightarrow\mathcal{C}$ be cartesian 
fibrations and let $f\colon p\sprime\rightarrow p$ be a cartesian functor over $\mathcal{C}$. 
\begin{enumerate}
\item Suppose $\mathcal{C}$ has pullbacks and $f\colon p\sprime\rightarrow p$ is the inclusion of a full subfibration. Then $p\sprime$ is 
definable if and only if the inclusion $\mathrm{St}(p\sprime)\hookrightarrow\mathrm{St}(p)$ is representable.
\item Suppose $p$ and $p\sprime$ are right fibrations. Then $\mathcal{C}$ has $f$-comprehension if and only if the natural transformation
$\mathrm{St}(p\sprime)\rightarrow\mathrm{St}(p)$ is representable.
\end{enumerate}
\end{lemma}
\begin{proof}
For Part 1 we note that the pullback (\ref{diagfullsubfib}) induces a pullback
\[\xymatrix{
(\mathrm{St}(p\sprime)^{\Delta^1})^{\simeq}\ar[d]\ar[r]^(.4){(d_1,d_0)^{\ast}}\ar@{}[dr]|(.3){\pbs}& (\mathrm{St}(p\sprime))^{\simeq}\times (\mathrm{St}(p\sprime))^{\simeq}\ar[d] \\
(\mathrm{St}(p)^{\Delta^1})^{\simeq}\ar[r]_(.4){(d_1,d_0)^{\ast}} & (\mathrm{St}(p))^{\simeq}\times(\mathrm{St}(p))^{\simeq} 
}\]
of presheaves in $\hat{\mathcal{C}}$. Thus, to show that the vertical natural transformation on the left hand side is representable, it 
suffices to show that the right hand side is representable. By assumption however the natural transformation
$(\mathrm{St}(p\sprime))^{\simeq}\rightarrow(\mathrm{St}(p))^{\simeq}$ is representable indeed. It follows that its product is representable  
whenever $\mathcal{C}$ has pullbacks by Lemma~\ref{lemmarepnattransfbasics}.2.
For Part 2, if $p$ and $p\sprime$ are  right fibrations the vertical morphisms in the diagram
\[\xymatrix{
\mathrm{St}(p\sprime)^{\Delta^1}\ar[r]& \mathrm{St}(p)^{\Delta^1} \\
\mathrm{St}(p\sprime)\ar[r]\ar[u]^{(s^0)^{\ast}} & \mathrm{St}(p)\ar[u]_{(s^0)^{\ast}} 
}\]
are natural equivalences, and it follows that the top horizontal natural transformation is representable if and only if the bottom one is.
\end{proof}

In Example~\ref{explerepslenattransfoverpt} we stated that an $\infty$-category $\mathcal{C}$ has finite products if and only if 
representability of a presheaf $X\in\hat{\mathcal{C}}$ is equivalent to representability of the natural transformation $X\rightarrow\ast$. 
This is a special case of the equally straightforward fact that $\mathcal{C}$ has pullbacks (with base $C\in\mathcal{C}$) if and only if 
representability of a natural transformation in $\hat{\mathcal{C}}$ over a representable presheaf (represented by $C$) is equivalent to 
representability of its domain. In the rest of this section, we prove a direct generalization of (one half of) this observation to 
$\mathcal{C}$-indexed $\infty$-categories: if $\mathcal{C}$ has finite limits, then a comprehensible cartesian functor over a small
$\mathcal{C}$-indexed $\infty$-category of the form $\mathrm{Ext}(Y)$ is the same thing as an internal functor over $Y$.
This will be formally stated in Proposition~\ref{propintfunctorscomp}.

\begin{lemma}\label{propdeffibs}
Let $\mathcal{C}$ be an $\infty$-category. Let $p\colon\mathcal{E}\twoheadrightarrow\mathcal{C}$ and
$p\sprime\colon\mathcal{E}\sprime\twoheadrightarrow\mathcal{C}$ be cartesian fibrations and let $f\colon p\sprime\rightarrow p$ be a 
cartesian functor over $\mathcal{C}$ such that $\mathcal{C}$ has $f$-comprehension.
\begin{enumerate}
\item  If $p$ is a globally small fibration, then so is $p\sprime$.
\item Suppose $\mathcal{C}$ has pullbacks. If $p$ is a small fibration, then so is $p\sprime$.
\end{enumerate}
\end{lemma}
\begin{proof}
In Part 1 the natural transformations $\mathrm{St}(p)^{\simeq}\rightarrow\ast$ and
$\mathrm{St}(p\sprime)^{\simeq}\rightarrow\mathrm{St}(p)^{\simeq}$ in $\hat{\mathcal{C}}$ are representable by assumption. This implies that 
the composition $\mathrm{St}(p\sprime)^{\simeq}\rightarrow\ast$ is representable by Lemma~\ref{lemmarepnattransfbasics}.1. Hence, the 
fibration $p\sprime$ is globally small by definition. For Part 2 consider the square
\begin{align}\label{diagpropdeffibs}
\begin{gathered}
\xymatrix{
(\mathrm{St}(p\sprime)^{\Delta^1})^{\simeq}\ar[d]_{\delta^{\ast}}\ar[r] & (\mathrm{St}(p)^{\Delta^1})^{\simeq}\ar[d]^{\delta^{\ast}}\\
(\mathrm{St}(p\sprime)^{\partial\Delta^1})^{\simeq}\ar[r] & (\mathrm{St}(p)^{\partial\Delta^1})^{\simeq} 
}
\end{gathered}
\end{align}
in $\hat{\mathcal{C}}$. The right vertical natural transformation and the top horizontal natural transformation are representable by 
assumption. The bottom horizontal natural transformation is representable by the assumption of $f$-comprehension and by the fact the class of 
representable natural transformation in $\hat{\mathcal{C}}$ is closed under finite limits (Lemma~\ref{lemmarepnattransfbasics}.2). Since 
representable natural transformations are furthermore left cancellable, it follows that the left vertical natural transformation is 
representable as well.
\end{proof}

In fact, whenever $p\sprime\subseteq p$ is full, the assumption of comprehension can be dropped in the case of local smallness alone.

\begin{lemma}\label{lemmadeffibs}
Let $\mathcal{C}$ be an $\infty$-category. A full subfibration of a locally small fibration over $\mathcal{C}$ is again locally small.
\end{lemma}
\begin{proof}
Given a fibration $p$ over $\mathcal{C}$ together with some full subfibration $p\sprime\subseteq p$, the statement follows directly from 
Lemma~\ref{lemmarepnattransfbasics}.1 and the fact that the square (\ref{diagpropdeffibs}) is the straightening of the cartesian square 
(\ref{diagfullsubfib}) by Proposition~\ref{propstrrect} and hence is cartesian itself.
\end{proof}

In the converse direction, we have the following.

\begin{lemma}\label{propdeffibs2}
Let $\mathcal{C}$ be an $\infty$-category with pullbacks. Suppose $f\colon p\sprime\rightarrow p$ is a cartesian functor between small 
cartesian fibrations over $\mathcal{C}$. Then $\mathcal{C}$ has $f$-comprehension.
\end{lemma}

\begin{proof}
First, consider the sequence
\[\mathrm{St}(p\sprime)^{\simeq}\rightarrow\mathrm{St}(p)^{\simeq}\rightarrow\ast\]
of natural transformations in $\hat{\mathcal{C}}$. The composition and the second natural transformation are representable by assumption. It 
follows that so is the first by Lemma~\ref{lemmarepnattransfbasics}.2. Second, consider the square (\ref{diagpropdeffibs}) in
$\hat{\mathcal{C}}$ again. The two vertical natural transformations are representable by assumption. By virtue of global smallness, we 
have seen that the natural transformation $\mathrm{St}(p\sprime)\rightarrow\mathrm{St}(p)$ is representable as well. It follows that
the bottom vertical transformation is representable, too, since the class of representable natural transformation in $\hat{\mathcal{C}}$ is 
closed under finite limits (again by Lemma~\ref{lemmarepnattransfbasics}.2). Thus, the top vertical morphism in the square is representable 
by the left cancellation property stated in the same lemma.
\end{proof}

\begin{proposition}\label{propintfunctorscomp}
Let $\mathcal{C}$ be an $\infty$-category with finite limits, let $Y$ be a complete Segal object in $\mathcal{C}$, and let
$f\colon p\rightarrow\mathrm{Ext}(Y)$ be a cartesian functor over $\mathcal{C}$. Then $p$ is small if and only if $\mathcal{C}$ has
$f$-comprehension. In particular, $\mathcal{C}$ has $\mathrm{Ext}(f)$-comprehension for all internal functors $f\colon X\rightarrow Y$ in
$\mathrm{CS}(\mathcal{C})$.
\end{proposition}

\begin{proof}
Immediate by Lemma~\ref{propdeffibs}, Lemma~\ref{propdeffibs2} and Theorem~\ref{thmsmall=ext}.
\end{proof}

\section{Univalence and smallness}\label{secextuniv}

We apply the results of the last two sections to the special case of full comprehension $\infty$-categories over locally cartesian closed
$\infty$-categories $\mathcal{C}$, and find that such are globally small if and only if they are the externalization of the nerve of a 
univalent morphism in $\mathcal{C}$. Against this background, we will end the section with a short discussion of higher elementary 
toposes in terms of comprehension schemes.\\

For the rest of this section suppose $\mathcal{C}$ is a left exact locally cartesian closed $\infty$-category. For every morphism
$q\colon E\rightarrow B$ in $\mathcal{C}$ we can associate the full and replete comprehension $\infty$-category
$F_q\subseteq\mathrm{Fun}(\Delta^1,\mathcal{C})\overset{t}{\twoheadrightarrow}\mathcal{C}$ where the fiber $F_q(C)\subseteq\mathcal{C}_{/C}$ 
consists of those morphisms with codomain $C$ which arise as pullback of $q$ along some morphism $C\rightarrow B$.

\begin{lemma}\label{lemmaunivalentcomprehensioncats}
For every morphism $q\in\mathcal{C}$, the cartesian fibration $F_q\twoheadrightarrow\mathcal{C}$ is locally small.
\end{lemma}
\begin{proof}
Immediate by Proposition~\ref{exmplelocsmall} and Proposition~\ref{propdeffibs}.2.
\end{proof}

A morphism $q\colon E\rightarrow B$ in $\mathcal{C}$ is \emph{univalent} if it is a $(-1)$-truncated object in the 
core $\mathrm{Fun}(\Delta^1,\mathcal{C})^{\times}$ \cite[Definition 2.1]{rasekhunivalence}. Equivalently, that is whenever the object
$q\in (F_q)^{\times}$ is terminal. By Lemma~\ref{lemmareprightfibs} in turn, this means that $q$ is univalent if and only 
if the functor $\ulcorner q\urcorner\colon \mathcal{C}_{/B}\rightarrow(F_q)^{\times}$ of right fibrations given by $1_B\mapsto q$ via the 
Yoneda lemma is an equivalence.

\begin{proposition}\label{propunivalentcomprehensioncats}
Given a full and replete comprehension $\infty$-category $J$ over $\mathcal{C}$, the following are equivalent.
\begin{enumerate}
\item $J\twoheadrightarrow\mathcal{C}$ is globally small.
\item There is a complete Segal space $\mathcal{N}(q)\in\mathrm{CS}(\mathcal{C})$ such that
$\mathrm{Ext}(\mathcal{N}(q))\simeq J$.
\item There is a univalent morphism $q\colon E\rightarrow B$ in $\mathcal{C}$ such that $J=F_q$.
\end{enumerate}
\end{proposition}

\begin{proof}
Parts 1 and 2 are equivalent by Theorem~\ref{thmsmall=ext} and Lemma~\ref{lemmaunivalentcomprehensioncats}.
If $J$ is globally small, its core $J^{\times}\twoheadrightarrow\mathcal{C}$ is representable by definition and hence has a terminal object 
$(q\colon E\rightarrow B)\in J^{\times}$. It follows that $J=F_q$ since $J\subseteq\mathrm{Fun}(\Delta^1,\mathcal{C})$ is full and replete. 
In particular, $q$ is terminal in $(F_q)^{\times}$ and so $q$ is univalent. Vice versa, whenever $J=F_q$ for a univalent morphism $q$ in
$\mathcal{C}$, then $q\in J^{\times}$ is terminal by definition, and so $\mathrm{St}(J^{\times})$ is representable. That means $J$ is 
globally small.
\end{proof}

The equivalence of Parts 2 and 3 has a non-univalent 1-categorical analogon as well, see \cite[Section 6]{streetintcat} and
\cite[Proposition 7.3.6]{jacobsttbook}. In the case that $\mathcal{C}$ is a presentable $\infty$-category, Gepner and Kock have characterized 
univalence of morphisms $q$ in $\mathcal{C}$ in terms of a sheaf property of the $\mathcal{C}$-indexed $\infty$-category $\mathrm{St}(F_q)$
\cite[Proposition 3.8]{gepnerkock}. This can be directly generalized as follows.

\begin{corollary}\label{corcharunivalencela}
Suppose $\mathcal{C}$ is locally cartesian closed and complete and $q\colon E\rightarrow B$ is a morphism in $\mathcal{C}$. Then 
$q\in\mathrm{Fun}(\Delta^1,\mathcal{C})^{\times}$ has a retract $p$ which is a univalent morphism in $\mathcal{C}$ if and only if the indexed
$\infty$-category $\mathrm{St}(F_q)\colon\mathcal{C}^{op}\rightarrow\mathrm{Cat}_{\infty}$ has a left adjoint. In particular, the functor
$\mathrm{St}(F_q)\colon\mathcal{C}^{op}\rightarrow\mathrm{Cat}_{\infty}$ preserves limits whenever $q$ has a univalent retract in
$\mathrm{Fun}(\Delta^1,\mathcal{C})^{\times}$.
\end{corollary}

\begin{proof}
If $q$ has a retract $p$ in $\mathrm{Fun}(\Delta^1,\mathcal{C})^{\times}$, then $F_q= F_p$. Furthermore, whenever $p$ is univalent, the 
straightening $\mathrm{St}(F_p)\colon\mathcal{C}^{op}\rightarrow\mathrm{Cat}_{\infty}$ has a left adjoint by 
Proposition~\ref{propunivalentcomprehensioncats} and Proposition~\ref{proprightadj}. Vice versa, whenever
$\mathrm{St}(F_q)\colon\mathcal{C}^{op}\rightarrow\mathrm{Cat}_{\infty}$ has a left adjoint, it is globally small again by 
Proposition~\ref{proprightadj}. By Proposition~\ref{propunivalentcomprehensioncats} it follows that there is a univalent morphism $p$ in
$\mathcal{C}$ such that $F_{q}=F_{p}$. Thus, both morphisms $p$ and $q$ are mutually pullbacks of one another, which yield morphisms
$q\rightarrow p$ and $p\rightarrow q$ in $(F_p)^{\times}$. As $p\in(F_p)^{\times}$ is terminal, it follows that $p$ is a retract of $q$
in $(F_q)^{\times}\subset\mathrm{Fun}(\Delta^1,\mathcal{C})^{\times}$.
\end{proof}

\begin{remark}
Given a cartesian fibration $\mathcal{E}\twoheadrightarrow\mathcal{C}$, one may define in this generality an object $x\in\mathcal{E}$ to 
be univalent if it is $(-1)$-truncated in the core $\mathcal{E}^{\times}$. Accordingly, one can generalize various related constructions. For 
instance, the notion of univalent completion as defined in \cite{univalentcompletion} and studied in more generality in
\cite[Section 5]{rs_uc} can be defined for any object $x\in\mathcal{E}$ as its $(-1)$-truncation in the core $\mathcal{E}^{\times}$ whenever 
it exists. 
\end{remark}

\begin{remark}
The fact that smallness of $F_q$ only implies the existence of a univalent completion $q\rightarrow p$ (with a section) in
$\mathcal{C}$ rather than univalence of $q$ itself is a special case of the fact that smallness of $\mathrm{Ext}(X)$ for a given Segal object 
$X$ in $\mathcal{C}$ only implies the existence of a Segal completion $X\rightarrow\rho(X)$ in $\mathcal{C}$ rather than completeness of $X$ 
itself (Remark~\ref{reminftysmall2}). Indeed, for every morphism $q$ in $\mathcal{C}$ one can construct a Segal object $\mathcal{N}(q)$ in
$\mathcal{C}$ which is complete if and only if $q$ is univalent \cite{rasekhunivalence}.
\end{remark}

In Remark~\ref{remtoposcomp} we discussed criteria towards a potential classification of elementary $\infty$-toposes via 
comprehension schemes. Against the background of the results in this section, let us finish this paper with a review of this discussion. 
In Remark~\ref{remtoposcomp} we argued that a presentable $\infty$-category is an $\infty$-topos if and only if its associated canonical 
indexing is locally small and $\kappa$-super powered for all sufficiently large regular cardinals $\kappa$. To generalize this criterion to
non-presentable $\infty$-categories as well, we can re-parametrize the associated class of comprehension schemes imposed on the canonical 
indexing as a single comprehension scheme imposed on a class of canonically associated full comprehension $\infty$-categories as follows. 
Assume for the moment being again that $\mathcal{C}$ is presentable. Then $\mathcal{C}$ is an $\infty$-topos if and only if colimits in
$\mathcal{C}$ are universal and its canonical indexing
\[\mathcal{C}_{/(\cdot)}\colon\mathcal{C}^{op}\rightarrow\mathrm{CAT}_{\infty}\]
preserves limits \cite[Theorem 6.1.3.9]{luriehtt}. If this canonical indexing in fact had a left adjoint, it would be small by 
Proposition~\ref{proprightadj}. Vice versa, every presentable $\infty$-category $\mathcal{C}$ with a small canonical indexing is an
$\infty$-topos. However, although the canonical indexing may be limit preserving, it cannot have a left adjoint if $\mathcal{C}$ is large as 
this would generate a single univalent universe in $\mathcal{C}$ via Proposition~\ref{propunivalentcomprehensioncats} (given that such a
$\mathcal{C}$ is automatically locally cartesian closed).
Yet, the underlying idea that an $\infty$-topos is a presentable $\infty$-category which is small over itself holds up to local 
approximations in the following sense.

Given a regular cardinal $\kappa$, let $\mathrm{Fun}(\Delta^1,\mathcal{C})_{\kappa}\subset\mathrm{Fun}(\Delta^1,\mathcal{C})$ be the full 
and replete $\infty$-subcategory generated by the relative $\kappa$-compact morphisms in $\mathcal{C}$ and consider its associated full 
comprehension $\infty$-category $t_{\kappa}\colon\mathrm{Fun}(\Delta^1,\mathcal{C})_{\kappa}\twoheadrightarrow\mathcal{C}$.

\begin{corollary}\label{chartopssmall}
A presentable $\infty$-category $\mathcal{C}$ is an $\infty$-topos if and only if the following two conditions hold.
\begin{enumerate}
\item The canonical fibration $t\colon\mathrm{Fun}(\Delta^1,\mathcal{C})\twoheadrightarrow\mathcal{C}$ is locally small.
\item For all sufficiently large regular cardinals $\kappa$, the full comprehension $\infty$-category
$t_{\kappa}\colon\mathrm{Fun}(\Delta^1,\mathcal{C})_{\kappa}\twoheadrightarrow\mathcal{C}$
is small.
\end{enumerate}
\end{corollary}
\begin{proof}
Immediate by Proposition~\ref{propunivalentcomprehensioncats} and \cite[Theorem 6.1.6.8]{luriehtt}.
\end{proof}

Whenever $\mathcal{C}$ is an $\infty$-topos, we can furthermore show relative smallness of all its $\infty$-subtoposes 
(in fact, of all its modalities in the sense of  \cite[Definition 3.2.1]{abfjsheavesI}). A version for 1-toposes of this fact can be found in
\cite[Proposition 9.6.9]{jacobsttbook}. Therefore, we note that whenever $\mathcal{C}$ is an $\infty$-category with pullbacks, then
pullback-stable classes $S$ of morphisms in $\mathcal{C}$ stand in 1-1 correspondence with full comprehension $\infty$-categories
$s\subseteq t$ over $\mathcal{C}$. A pullback-stable class $S$ is said to be \emph{local} if the induced $\mathcal{C}$-indexed
$\infty$-category $S_{/(\cdot)}:=\mathrm{St}(s)\colon\mathcal{C}^{op}\rightarrow\mathrm{CAT}_{\infty}$ preserves small limits
\cite[Definition 6.1.3.8]{luriehtt}.

\begin{corollary}\label{proplocaldefty}
Suppose $\mathcal{C}$ is an $\infty$-topos and $S$ is a pullback-stable class of morphisms in $\mathcal{C}$. Then its associated
full comprehension $\infty$-category $s\subseteq t$ is definable if and only if $S$ is a local class.
In particular, every $\infty$-subtopos of $\mathcal{C}$ (in fact every modality) considered as a 
subfibration of $t\colon\mathrm{Fun}(\Delta^1,\mathcal{C})\twoheadrightarrow\mathcal{C}$ is definable.
\end{corollary}
\begin{proof}
On the one hand, the full subfibration $s\colon S\twoheadrightarrow\mathcal{C}$ of
$t\colon\mathrm{Fun}(\Delta^1,\mathcal{C})\twoheadrightarrow\mathcal{C}$ is definable if and only if the full inclusion
\begin{align}\label{equproplocaldefty}
S^{\times}\hookrightarrow\mathrm{Fun}(\Delta^1,\mathcal{C})^{\times}
\end{align}
fibered over $\mathcal{C}$ has a (non-fibered) right adjoint by Lemma~\ref{lemmadefreduct}. On the other 
hand, the class $S$ is local if and only if the inclusion $S^{\times}\hookrightarrow\mathrm{Fun}(\Delta^1,\mathcal{C})$ creates colimits via 
\cite[Lemma 6.1.3.7]{luriehtt}. As the class of all morphisms in $\mathcal{C}$ is local by assumption, this holds if and only if the full 
inclusion (\ref{equproplocaldefty}) creates colimits. Now, if $s\subseteq t$ is definable, then the full inclusion (\ref{equproplocaldefty}) 
is a left adjoint and 
hence creates colimits. In particular, definability of $s\subseteq t$ implies locality of $S$. Vice versa, suppose $S$ is local. To 
show that the inclusion (\ref{equproplocaldefty}) has a right adjoint, we are to show for any given
$f\in\mathrm{Fun}(\Delta^1,\mathcal{C})^{\times}$ that the comma $\infty$-category $S^{\times}\downarrow f$ has a terminal object. The 
morphism $f$ however is relative $\kappa$-compact for all regular cardinals $\kappa$ large enough. If we denote by $S_{\kappa}$ the 
intersection of $S$ and $\mathrm{Fun}(\Delta^1,\mathcal{C})_{\kappa}$, we observe that
$S_{\kappa}^{\times}\downarrow f = S^{\times}\downarrow f$ for any such $\kappa$. Hence, the latter has a terminal object if the inclusion
\[\iota_{\kappa}\colon S_{\kappa}^{\times}\hookrightarrow\mathrm{Fun}(\Delta^1,\mathcal{C})_{\kappa}^{\times}\]
has a right adjoint. If we denote by $s_{\kappa}\subseteq s$ the respective full subfibration, this holds whenever 
$\mathcal{C}$ has $(s_{\kappa}\subseteq t_{\kappa})$-comprehension. The class $S_{\kappa}$ is local again, and the continuous functor
$(S_{\kappa})_{/(\cdot)}\colon\mathcal{C}^{op}\rightarrow\mathrm{CAT}_{\infty}$ factors through $\mathrm{Cat}_{\infty}$.
It follows that the $\mathcal{C}$-indexed $\infty$-category $(S_{\kappa})_{/(\cdot)}$ is small by Remark~\ref{remproprightadj}. 
Thus, the inclusion $s_{\kappa}\subseteq t_{\kappa}$ is a cartesian functor between small fibrations, and so $\mathcal{C}$ has
$(s_{\kappa}\subseteq t_{\kappa})$-comprehension by Lemma~\ref{propdeffibs2}. 

Lastly, any given $\infty$-subtopos (or any modality) of $\mathcal{C}$ can be presented as a full subfibration of $t$ over $\mathcal{C}$, see 
e.g.\ \cite[Theorem A.7]{rss_hottmod}. The fact that the domain of this fibration is a local class of morphisms in $\mathcal{C}$ is shown for 
instance in \cite[Proposition 3.2.7]{abfjsheavesI}.
\end{proof}

For general $\infty$-categories $\mathcal{C}$ with pullbacks, a pullback-stable class $S$ of morphisms in $\mathcal{C}$  is ``closed'' 
\cite[Definition 3.4]{rasekheltops} if for all $C\in\mathcal{C}$ the fiber $s(C)$ of its associated full comprehension $\infty$-category 
$s\subseteq t$ is closed under all finite limits and colimits. We obtain a characterization of elementary $\infty$-toposes entirely in terms 
of comprehension schemes as follows.

\begin{corollary}\label{chareltopssmall}
An $\infty$-category $\mathcal{C}$ has finite limits and colimits if and only if
\begin{enumerate}
\item the identity over $\mathcal{C}$ and the identity over $\mathcal{C}^{op}$ are both representable right fibrations,
\item and all representable right fibrations over $\mathcal{C}$ and all representable right fibrations over $\mathcal{C}^{op}$ are small.
\end{enumerate}
Furthermore, an $\infty$-category $\mathcal{C}$ with finite limits and colimits is an elementary higher 
topos in the sense of \cite[Definition 3.5]{rasekheltops} if and only if the following two conditions hold.
\begin{enumerate}
\item[3.] The canonical fibration $t\colon\mathrm{Fun}(\Delta^1,\mathcal{C})\twoheadrightarrow\mathcal{C}$ has a subterminal object 
classifier (Example~\ref{explesubtermobclass}).
\item[4.] There is a (generally proper class-sized) collection $\mathcal{A}=\{S_i\mid i\in I\}$ of closed subclasses
$S_i\subset\mathrm{Fun}(\Delta^1,\mathcal{C})$ such that $\bigcup_{i\in I}S_i=\mathrm{Fun}(\Delta^1,\mathcal{C})$ and such that the 
associated full comprehension $\infty$-categories $s_i\colon S_i\twoheadrightarrow\mathcal{C}$ are small for every $i\in I$.
\end{enumerate}
\end{corollary}
\begin{proof}
The identity over $\mathcal{C}$ is the unstraightening of the terminal presheaf. It is representable if and only if $\mathcal{C}$ has a 
terminal object. Given a terminal object in $\mathcal{C}$, we have seen in Example~\ref{remlocsmallrep} that smallness of all representables 
over $\mathcal{C}$ is equivalent to the existence of all finite limits in $\mathcal{C}$. The same applies to $\mathcal{C}^{op}$. 
Given Proposition~\ref{propunivalentcomprehensioncats} and the fact that the slice of an elementary $\infty$-topos is again an elementary
$\infty$-topos \cite[Theorem 3.10]{rasekheltops}, the second part of the corollary is a mere reformulation of the axioms of an 
elementary $\infty$-topos.
\end{proof}

The first part of Corollary~\ref{chareltopssmall} can be formulated entirely in terms fibered over $\mathcal{C}$ by 
characterizing (representable) right fibrations over $\mathcal{C}^{op}$ as (corepresentable) left fibrations over $\mathcal{C}$. That means, 
$\mathcal{C}$ has finite colimits if and only if the identity on $\mathcal{C}$ is a corepresentable left fibration and all corepresentable 
left fibrations over $\mathcal{C}$ are small in the dual sense that they are equivalent to the nerve of an interval object in $\mathcal{C}$ 
in the sense of To\"en \cite{toeninftycats}.

The second part of the corollary can be interpreted to state that while a finitely bi-complete (and large) $\infty$-category $\mathcal{C}$ 
with a well-powered canonical indexing generally cannot be globally small over itself, it instead can be covered by an atlas $\mathcal{A}$ of 
small neighbourhoods in $\mathcal{C}$ exactly if it is an elementary $\infty$-topos. Local smallness of an elementary $\infty$-topos 
over itself then follows from the definition via \cite[Theorem 3.11]{rasekheltops} and Proposition~\ref{exmplelocsmall}. Vice versa, if on 
top of well-poweredness also local smallness of $\mathcal{C}$ over itself is assumed explicitly, then the existence of a cover of small 
neighbourhoods is equivalent to a cover of globally small neighbourhoods by Proposition~\ref{propunivalentcomprehensioncats}. 
Whenever the $\infty$-category $\mathcal{C}$ is presentable as well, 
Condition 2 in Corollary~\ref{chartopssmall} is exactly Condition 4 in Corollary~\ref{chareltopssmall} for the atlas
$\mathcal{A}=\{S_{\kappa}\mid \kappa\in\mathrm{Card}\text{ sufficiently large}\}$. 

\section{Model independence}\label{secmodindep}

Lastly, in this section we show that our constructions and main results regarding comprehension are invariant under the choice of any of the 
common models of $(\infty,1)$-category theory that the constructions and results are formulated in. As announced in the beginning of 
Section~\ref{secsubcartfib}, we therefore use Riehl and Verity's framework of $\infty$-cosmoses \cite{riehlverityelements}. Here, an
$\infty$-cosmos $\mathbf{V}$ is said to be an \emph{$\infty$-cosmos of $(\infty,1)$-categories} whenever the global sections functor
$(\cdot)_0\colon\mathbf{V}\rightarrow\mathbf{QCat}$ into the $\infty$-cosmos of quasi-categories is a cosmological biequivalence
\cite[Definition 1.3.10, Proposition 10.2.1]{riehlverityelements}. Every known model of $(\infty,1)$-category theory exhibits 
the structure of an $\infty$-cosmos together with such a cosmological biequivalence. In this section, we will stay 
faithful to the terminology used in the book \cite{riehlverityelements} for referential purposes. \\

In the following we will define comprehension schemes for fibrations and functors in general $\infty$-cosmoses $\mathbf{V}$ (with additional 
structure) which generalize the notions introduced in Section~\ref{seccomp} as well as in the introduction to this framework. Technically, we 
will prove that, first, all (suitable) cosmological functors between (suitable) $\infty$-cosmoses preserve satisfaction of all 
such comprehension schemes\footnote{The condition of being ``suitable'' will only be necessary for the case of basic diagrammatic 
comprehension. It means that the $\infty$-cosmos $\mathbf{V}$ has a fibered core construction and that the cosmological functor preserves 
fibered core constructions, see Definition~\ref{defGcompcosmos}.}, and second, that cosmological biequivalences both preserve and reflect 
satisfaction of all such comprehension schemes. We will state our results for the cosmological global sections functors
$(\cdot)_0\colon\mathbf{V}\rightarrow\mathbf{QCat}$ only however to not divert too far from the subject at hand.
It follows that the notions of comprehension in this paper are invariant under the choice of model of $(\infty,1)$-category theory. It  
further iteratively follows that all results and constructions of this paper which are otherwise comprised of formal $\infty$-cosmological 
structures only are model independent.

\begin{definition}
Let $\mathbf{V}$ be an $\infty$-cosmos and $p\colon E\twoheadrightarrow C$, $q\colon F\twoheadrightarrow C$ be a pair of discrete fibrations 
in $\mathbf{V}$. Say that a functor $f\colon p\rightarrow q$ in $\mathbf{V}_{/C}$ is \emph{representable} in $\mathbf{V}$ if
$f\colon E\rightarrow F$ as a functor in $\mathbf{V}$ has a right adjoint.
\end{definition}

\begin{proposition}[Invariance of representability]
Let $\mathbf{V}$ be an $\infty$-cosmos of $(\infty,1)$-categories and $C\in\mathbf{V}$ and object. Then the cosmological biequivalence
$(\cdot)_0\colon\mathbf{V}\rightarrow\mathbf{QCat}$ induces a cosmological biequivalence
\begin{align}\label{equequivdiscfib}
\mathbf{DiscCart}(\mathbf{V})_{/C}\rightarrow\mathbf{DiscCart}(\mathbf{QCat})_{/C_0}
\end{align}
between the corresponding $\infty$-cosmoses of discrete fibrations \cite[Proposition 6.3.15]{riehlverityelements}. In particular,
\begin{enumerate}
\item we obtain a 1-1 correspondence between homotopy classes of discrete fibrations in $\mathbf{V}$ over $C$, and presheaves over the
quasi-category $C_0$.
\item given two discrete fibrations $p\colon E\twoheadrightarrow C$ and $q\colon F\twoheadrightarrow C$ in $\mathbf{V}$ over $C$,
we obtain a 1-1 correspondence between homotopy classes of functors between $p$ and $q$ in $\mathbf{V}_{/C}$, and 
natural transformations between the presheaves $\mathrm{St}(p_0)$ and $\mathrm{St}(q_0)$ over $C_0$.
\end{enumerate}  
Furthermore, a functor $f\colon p\rightarrow q$ of right fibrations in $\mathbf{V}_{/C}$ is representable in $\mathbf{V}$ if and only if the 
induced natural transformation $\mathrm{St}(f_0)\colon\mathrm{St}(p_0)\rightarrow\mathrm{St}(q_0)$ of presheaves over $C_0$ is representable.
\end{proposition}
\begin{proof}
The existence of the induced cosmological biequivalence between the two respective $\infty$-cosmoi of discrete fibrations is proven in
\cite[Corollary 10.3.7]{riehlverityelements}. The $\infty$-cosmos of discrete fibrations in $\mathbf{QCat}$ over $C_0$ is exactly the 
underlying $\infty$-cosmos of the model category for right fibrations over $C_0$ via \cite[Proposition F.4.9]{riehlverityelements}. The 
cosmological biequivalence (\ref{equequivdiscfib}) hence induces a composite equivalence of underlying $\infty$-categories
\[\mathrm{DiscCart}(\mathbf{V})_{/C}\simeq\mathrm{RFib}(C_0)\simeq \widehat{C_0}\]
via the straightening/unstraightening construction for quasi-categories from Section~\ref{secpre}. Parts 1 and 2 are hence immediate. The 
fact that a functor $f$ of discrete fibrations in $\mathbf{V}_{/C}$ is representable if and only if $f_0$ in $\mathbf{QCat}_{/C_0}$ is so 
follows from \cite[Proposition 10.3.6]{riehlverityelements}. Equivalence of this notion of representability of $f_0$ to representability of 
its associated natural transformation of presheaves in turn follows from Proposition~\ref{proprepcartmaps}.
\end{proof}

\begin{corollary}
A general comprehension scheme over an $\infty$-category $C$ in form of a representable functor of discrete fibrations over $C$ is a model 
independent notion.\qed
\end{corollary}

Let us move on to the more specific diagrammatic comprehension schemes of Section~\ref{seccomp}.
A general $\infty$-cosmos $\mathbf{V}$ does not necessarily come equipped with a $(\cdot)^{op}$-construction. As this however is the 
only obstruction to define $(K,G)$-comprehension of $C$-indexed objects in $K$ internal to $\mathbf{V}$, we instead define
$(K^{op},G)$-comprehension formally as follows.

\begin{definition}
Let $\mathbf{V}$ be an $\infty$-cosmos and $E\colon C\rightarrow K$ be a functor in $\mathbf{V}$. Every morphism
$\{G\}\colon\ast\rightarrow K^{\Delta^1}$, depicted as $G\colon l\rightarrow k$ in $K$, induces a functor
$G_{\ast}\colon K_{/l}\rightarrow K_{/k}$ of representable discrete fibrations over $K$ in $\mathbf{V}$. 
We say that $E$ has \emph{$(K^{op},G^{op})$-comprehension} if the functor $E^{\ast}(G_{\ast})\colon E^{\ast}K_{/l}\rightarrow E^{\ast}K_{/k}$ 
of discrete cartesian fibrations over $C$ has a right adjoint in $\mathbf{V}$.
\end{definition}

An $\infty$-cosmos of $(\infty,1)$-categories more specifically however does automatically come equipped with a suitable
$(\cdot)^{op}$-construction \cite[Definition 12.1.4]{riehlverityelements} for its objects. In this case we have the following  
characterization.

\begin{proposition}[Invariance of $(K,G)$-comprehension]
Let $\mathbf{V}$ be an $\infty$-cosmos of $(\infty,1)$-categories, $E\colon C\rightarrow K$ be a functor in $\mathbf{V}$, and
$G\colon l\rightarrow k$ be a morphism in $K$. Then $E$ has $(K^{op},G^{op})$-comprehension if and only if the associated functor
$E_0^{op}\colon C_0^{op}\rightarrow K_0^{op}$ of quasi-categories has $(K_0^{op},G^{op})$-comprehension in the sense of 
Definition~\ref{defGcomp}.

In particular, a functor $E\colon C\rightarrow K^{op}$ in $\mathbf{V}$ has $((K^{op})^{op},(G^{op})^{op})$-comprehension if and only if the 
functor $E_0^{op}\colon C_0^{op}\rightarrow K_0$ of quasi-categories has $(K,G)$-comprehension.
\end{proposition}

\begin{proof}
Let $E\colon C\rightarrow K$ and $G\colon l\rightarrow k$ be as above. Then, again by \cite[Proposition 10.3.6]{riehlverityelements}, the 
functor $E^{\ast}(G_{\ast})\colon E^{\ast}K_{/l}\rightarrow E^{\ast}K_{/k}$ of discrete cartesian fibrations over $C$ has a right adjoint in
$\mathbf{V}$ if and only if the functor
\begin{align}\label{equpropinvKGcomp}
E_0^{\ast}(G_{\ast})\colon E_0^{\ast}((K_0)_{/l})\rightarrow E_0^{\ast}((K_0)_{/k})
\end{align}
of right fibrations over $C_0$ has a right adjoint in $\mathbf{QCat}$. 
The latter however is the unstraightening of the natural transformation
\begin{align}\label{equpropinvKGcomp1}
(E_0^{op})^{\ast}(G_{\ast})\colon K_0(E_0^{op},l)\rightarrow K_0(E_0^{op},k)
\end{align}
in $\widehat{C_0}$ essentially by Example~\ref{explscartfibs}.3. Thus, the functor (\ref{equpropinvKGcomp}) has a right adjoint if and only 
if the natural transformation (\ref{equpropinvKGcomp1}) is representable by Proposition~\ref{proprepcartmaps}. This natural transformation is 
canonically equivalent to
\[(E_0^{op})^{\ast}((G^{op})^{\ast})\colon K_0^{op}(l,E_0^{op})\rightarrow K_0^{op}(k,E_0^{op}),\]
which is representable if and only if $E_0^{op}$ has $(K_0^{op},G^{op})$-comprehension by definition.
The second part is a trivial consequence of idempotency of taking opposites.
\end{proof}

Lastly, we consider the special case of $G$-comprehension for a map $G\colon I\rightarrow J$ between simplicial sets. To define it, we 
require a fibered core construction over every base of our $\infty$-cosmos $\mathbf{V}$ so to be able to construct the discrete fibrations of 
pinched rectangular diagrams. Thus, we say that an $\infty$-cosmos $\mathbf{V}$ has a fibered core construction whenever for every 
$C\in\mathbf{V}$, every object $p$ in the $\infty$-cosmos $\mathbf{Cart}(\mathbf{V})$ has an $\infty$-groupoid core
$\iota\colon p^{\times}\rightarrow p$ in the sense of \cite[Definition 12.1.14]{riehlverityelements}. That means each $p^{\times}$ is a 
discrete object in $\mathbf{Cart}(\mathbf{V})$ such that for any other discrete object $q\in\mathbf{Cart}(\mathbf{V})$ the pushforward
\begin{align}\label{equdefcorecosmos}
\iota_{\ast}\colon\mathbf{Cart}(\mathbf{V})(q,p^{\times})\rightarrow\mathbf{Cart}(\mathbf{V})(q,p)
\end{align}
is an equivalence of quasi-categories. Here, note that the discrete objects in $\mathbf{Cart}(\mathbf{V})$ are exactly the discrete cartesian 
fibrations by definition \cite[Definition 5.5.3]{riehlverityelements}.

\begin{definition}\label{defGcompcosmos}
Let $\mathbf{V}$ be an $\infty$-cosmos with a fibered core construction. Let $p\colon E\twoheadrightarrow C$ be a cartesian fibration in
$\mathbf{V}$ and $G\colon I\rightarrow J$ be a map of simplicial sets. Say that $p$ has \emph{$G$-comprehension} if the functor of
$\infty$-categories underlying the (essentially unique) restriction $(p^G)^{\times}\colon (p^J)^{\times}\rightarrow(p^I)^{\times}$ obtained 
from (\ref{equdefcorecosmos}) has a right adjoint in $\mathbf{V}$.
\end{definition}

Unsurprisingly, for instance, every $\infty$-cosmos $\mathbf{V}$ of $(\infty,1)$-categories has a fibered core construction, as can be seen 
simply by reflecting the core construction from Definition~\ref{defcore} along the induced cosmological biequivalence
$\mathbf{Cart}(\mathbf{V})\xrightarrow{\sim}\mathbf{Cart}(\mathbf{QCat})$.

\begin{proposition}[Invariance of $G$-comprehension]
Let $\mathbf{V}$ be an $\infty$-cosmos of $(\infty,1)$-categories, $p\colon E\twoheadrightarrow C$ be a cartesian fibration in
$\mathbf{V}$ and $G\colon I\rightarrow J$ be a map of simplicial sets. Then $p$ has $G$-comprehension if and only if the associated
$C_0$-indexed quasi-category $\mathrm{St}(p_0)$ has $G$-comprehension in the sense of Notation~\ref{notationGcompstrict}.
\end{proposition}
\begin{proof}
The induced cosmological functor $\mathbf{Cart}(\mathbf{V})\xrightarrow{\sim}\mathbf{Cart}(\mathbf{QCat})$ preserves both simplicial 
cotensors and the fibered core construction. The cosmological biequivalence $(\cdot)_0\colon\mathbf{V}\rightarrow\mathbf{QCat}$ preserves and 
reflects right adjoints, and so $p$ has $G$-comprehension in $\mathbf{V}$ if and only if the functor
$(p_0^G)^{\times}\colon\llbracket J,E_0\rrbracket\rightarrow\llbracket I,E_0\rrbracket$ of quasi-categories has a right adjoint. This in turn 
holds if and only if the presheaf $\mathrm{St}(p_0)$ has $G$-comprehension by Proposition~\ref{propstrrect}.
\end{proof}

\bibliographystyle{amsplain}
\bibliography{BSBib}

\end{document}